\newtheorem{theorem}{Theorem}[section]
\newtheorem{lemma}[theorem]{Lemma}
\newtheorem{proposition}[theorem]{Proposition}
\newtheorem{corollary}[theorem]{Corollary}
\newtheorem{definition}[theorem]{Definition}
\newtheorem{example}[theorem]{Example}
\newtheorem{remark}[theorem]{Remark}
\newcommand{\N}{\mathbb{N}}
\newcommand{\mpr}{\mathbb P}
\newcommand{\mr}{{\mathbb R}}
\newcommand{\R}{{\mathbb R}}
\newcommand{\Rd}{{\mr^d}}
\newcommand{\dx}{{\rm d}x}
\newcommand{\dy}{{\rm d}y}
\newcommand{\dz}{{\rm d}z}
\newcommand{\dt}{{\rm d}t}
\DeclareMathOperator{\dist}{dist}
\newcommand{\dw}{{\rm d}w}
\newcommand{\ds}{{\rm d}s}
\newcommand*\eps{\epsilon}
\newcommand*\olp{\overline{P}}
\newcommand*\oU{\overline{U}}
\newcommand*\pord{\int_{\mR^d}}
\newcommand*\podc{\int_{D^c}}
\newcommand*\mE{\mathbb{E}}
\newcommand*\mR{\mathbb{R}}
\newcommand*\me{\mathrm{e}}
\newcommand*\nudy{\nu(\mathrm{d}y)}
\newcommand*\pordd{\int_{\mR^{2d}}}
\newcommand*\vd{\mathcal V^D}
\newcommand*\xd{\mathcal{X}^{D}}
\DeclareMathOperator{\diam}{diam}
\newcommand{\ED}{{\mathcal E}_D}
\newcommand{\HD}{{\mathcal H}_D}
\subjclass[2010]{Primary 46E35; Secondary 35A15, 31C05}
\keywords{Sobolev space, nonlocal operator, extension theorem}
\title[Extension and trace
for nonlocal operators]{Extension and trace
for nonlocal operators}
\author[K. Bogdan]{Krzysztof Bogdan}
\address{ Faculty of Pure and Applied Mathematics,
Wroc\l aw University of Science and Technology,
Wyb. Wyspia\'nskiego 27, 50-370 Wroc\l aw, Poland}
\email{bogdan@pwr.edu.pl}
\author[T. Grzywny]{Tomasz Grzywny}
\address{ Faculty of Pure and Applied Mathematics,
Wroc\l aw University of Science and Technology,
Wyb. Wyspia\'nskiego 27, 50-370 Wroc\l aw, Poland}
\email{tomasz.grzywny@pwr.edu.pl}
\author[K. Pietruska-Pa\l{}uba]{Katarzyna Pietruska-Pa\l{}uba}
\address{Institute of Mathematics, University of Warsaw, ul Banacha 2, 02-097 Warsaw, Poland}
\email{kpp@mimuw.edu.pl}
\author[A. Rutkowski]{Artur Rutkowski}
\address{ Faculty of Pure and Applied Mathematics,
Wroc\l aw University of Science and Technology,
Wyb. Wyspia\'nskiego 27, 50-370 Wroc\l aw, Poland}
\email{artur.rutkowski@pwr.edu.pl}
\thanks{The research was partially supported by the NCN grant 2014/14/M/ST1/00600. The fourth named author was also supported by the NCN grant 2015/18/E/ST1/00239}
\begin{document}
\begin{abstract}
	We prove an optimal extension and trace theorem for Sobolev spaces of nonlocal operators.
	The extension  is given by a suitable Poisson integral and
	solves the corresponding nonlocal Dirichlet problem. We
give  a Douglas-type formula for
the
quadratic form of the
Poisson extension.

\end{abstract}
\maketitle

\section{Introduction}\label{sec:Intro}

Let $d=1,2,\ldots$. Let $\nu\colon [0,\infty)\to (0,\infty]$ be nonincreasing and denote $\nu(z)=\nu(|z|)$ for $z\in \Rd$.
In particular, $\nu(z)=\nu(-z)$. We assume that $\int_\Rd \nu(z)\,\dz=\infty$
but
$$
\int_\Rd \left(|z|^2\wedge 1\right)\nu(z)\dz<\infty.
$$
Summarizing, $\nu$ is a strictly positive density function of
isotropic infinite unimodal  L\'{e}vy measure on $\Rd$,
in short,
$\nu$ is {\it unimodal}. We will add further assumptions on $\nu$ later on.
For $x\in \Rd$ and $u\colon \Rd\to \R$ we let
\begin{eqnarray}\label{eq:L-def}
Lu(x)
&=&\lim_{\epsilon\to 0^+} \int_{|x-y|>\epsilon} (u(y)-u(x))\nu(y-x)\,\dy\\
&=&\lim_{\epsilon\to 0^+} \tfrac12 \!\int_{|x-y|>\epsilon} \!\!\!(u(x+z)+u(x-z)-2u(x))\nu(z)\,\dz.\nonumber
\end{eqnarray}
Here and in what follows all the considered
sets, functions and measures are assumed to be Borel.
The limit in \eqref{eq:L-def} exists,
e.g., for
$u\in C_c^\infty (\mathbb R^d)$, the smooth functions with compact support.
We note that $L$ is a non-local symmetric translation-invariant linear operator on $C_c^\infty (\mathbb R^d)$ satisfying the positive maximum principle, cf. \cite[Section 2]{MR2320691}.
For example,  if  $0<\alpha<2$  and
\begin{equation}
\label{eq:lm}
\nu(z)=\frac{2^{\alpha}\Gamma((d+\alpha)/2)}{\pi^{d/2}|\Gamma(-\alpha/2)|}
|z|^{-d-\alpha}\,, \quad z\in \Rd\,,
\end{equation}
then $L$ is the fractional Laplacian, denoted by $\Delta^{\alpha/2}$ or $-(-\Delta)^{\alpha/2}$.
Back to the case of general $\nu,$
in what follows we write
$$\nu(x,y)=\nu(y-x), \quad x,y\in \Rd.$$
Let $ D\neq \emptyset$ be
an open set in $\Rd$  (further assumptions on $D$ will be added as needed).
Motivated by  Dipierro, Ros-Oton and Valdinoci \cite{MR3651008},
Felsinger, Kassmann and Voigt \cite{MR3318251},
and Millot, Sire and Wang \cite{2016arXiv161007194M}
for $u\colon \Rd \to \R$ we let
\begin{equation}\label{eq:ddf}
\ED(u,u)= \ \ \tfrac12\!\!\!\!\!\!\!\!\!\!\!\!\iint\limits_{\mathbb R^d\times\mathbb R^d\setminus   D^c\times  D^c}
(u(x)-u(y))^2 \nu(x,y)\,\dx\dy.
\end{equation}
Similar expressions are used by Caffarelli, Roquejoffre and Savin \cite[Section~7]{CPA:CPA20331} in the study of minimal surfaces.
For more general L\'evy measures we refer the reader to Rutkowski \cite{MR3738190} and Ros-Oton \cite{MR3447732}.
The quadratic form $\ED(u,u)$ measures the smoothness of $u$ in a Sobolev fashion
by integrating the squared increments of $u$.
The  corresponding Sobolev space is defined as
\begin{equation}\label{eq:dSs}
\vd
=\{u\colon \R^d\to \R \mbox{ such that } \ED (u,u)<\infty\}.
\end{equation}
We also consider
$$\vd_0 = \{u\in \vd: \;u=0 \; a.e.\text{ on } D^c\}.$$
We note that for geometrically regular sets $D$, e.g. Lipschitz open sets, $\vd_0$ can be approximated by the {\it bona fide} test functions, $C_c^\infty (D)$. This is proved in Theorem~\ref{th:density} in the Appendix,
and we refer the reader to
Fukushima, Oshima and Takeda \cite[Section~2.3]{MR2778606} for
a wider context of such approximations.
We consider $\vd$ as the counterpart of the classical Sobolev space $H^1(D)$ of the Laplacian $\Delta$, and $\vd_0$ is a counterpart of $H^1_0(D)$, see, e.g., Evans \cite[Section 5.2]{MR2597943}.\\
We need to note \cite{MR2778606} that
the standard Dirichlet form of $L$ is
\begin{equation}\label{cDf}
\mathcal E_\Rd(u,u)= \ \ \tfrac12\!\!\!\!\iint\limits_{\mathbb R^d\times
	\mathbb R^d} (u(x)-u(y))^2\nu(x,y)\,\dx\dy.
\end{equation}
Clearly,
$\mathcal E_\Rd(u,u)=\ED (u,u)+ \tfrac12\iint_{D^c\times
	D^c} (u(x)-u(y))^2\nu(x,y)\,\dx\dy\ge \ED (u,u)$,
and
$\ED (u,u)=\mathcal E_\Rd(u,u)$ if $u=0$ \textit{a.e.} on $D^c$, in particular if $D=\Rd$.  We let
\begin{equation*}
\ED(u,v) = \ \ \tfrac12\!\!\!\!\!\!\!\!\!\!\!\!\iint\limits_{\mathbb R^d\times\mathbb R^d\setminus   D^c\times  D^c}
(u(x)-u(y))(v(x) - v(y)) \nu(x,y)\,\dx\dy
\end{equation*}
if the integral is absolutely convergent, in particular for  $u,v \in \vd.$ If $u$ and $\phi$ are regular enough, e.g. if $u\in C_c^\infty(\mR^d)$ and $\phi \in C_c^\infty (D),$ then
we have $\ED (u,\phi) = \mathcal{E}_{\mR^d}(u,\phi) = \int_{\mR^d} \phi(x)Lu(x)\,\dx = -\int_{\mR^d} u(x) L\phi(x)\,\dx$  (see the proof of  Lemma~\ref{lem:weakdistr} below).
We emphasize that the increments of $u$ between points $x,y\in D^c$ do not appear in
\eqref{eq:ddf}.
For this reason we consider
$\ED$ and $\vd$  as the
optimal Sobolev setting
for nonlocal
Neumann \cite{MR3651008} and Dirichlet
problems, and we advocate for their use in nonlocal linear and nonlinear PDEs.
Below we prepare ground for such applications, and we focus on the Dirichlet problem:
\begin{equation}\label{eq:Dp}
\left\{
\begin{array}{ll}
Lu=0 & \mbox{ in }  D,\\
u=g & \mbox{ on } D^c.
\end{array}\right.
\end{equation}
We want to solve \eqref{eq:Dp} under minimal smoothness assumptions on the {\it exterior condition} $g\colon D^c\to \R$.
By a solution of
\eqref{eq:Dp} we mean a {\it weak solution}, which is defined as any function $u\in\vd$ equal  to $g$ $a.e.$ on $D^c$, i.e., an {\it extension} of $g$,  such that for all $\phi \in \vd_0$,
\begin{equation}\label{eq:ws-a}
\ED(u,\phi) = 0,
\end{equation}
or, equivalently, $\mathcal{E}_{\mR^d}(u,\phi) = 0$, cf.
\cite{MR3318251,MR3738190}. Even though $u$ does not necessarily belong to $\mathcal V^{\mathbb R^d},$ the integral defining $\mathcal E_{\mathbb R^d}(u,\phi)$ is well defined thanks to the properties of $\phi.$

The interplay of $L$ and \eqref{eq:ws-a} will be visible later on in proofs and results.
In passing we like to mention other viable approaches to
the Dirichlet problem, which do not use the Sobolev setting.
Thus,
we may define
$Lu=0$ as equality of distributions on $D$, to wit,
$\int_{\mR^d} u(x) L\phi(x)\dx=0$ for $\phi\in C^\infty_c(D)$. Alternatively we may replace the condition $Lu=0$, i.e., harmonicity, by a suitable  probabilistic or analytic mean-value property  using the corresponding stochastic process or Poisson and Green kernels.
All the threads will appear in our development below. For additional information we refer
the reader to
Grzywny, Kassmann and Le\.zaj \cite{GKL}.
We note that the special case of $\Delta^{\alpha/2}$ was earlier thoroughly discussed
by
Bogdan and Byczkowski in \cite[Section~3]{MR1671973} and \cite[Lemma~5.3]{MR1825645}; and we refer to Silvestre \cite{MR2270163} for the setting of tempered distributions.

We also note that much care should be exercised when interpreting \eqref{eq:Dp} pointwise or in terms of generators of operator semigroups. Indeed, harmonic functions of reasonable operators may lack sufficient regularity to calculate $Lu(x)$, see Bogdan and Sztonyk \cite[p. 120]{MR2320691}.  On the other hand
even the test functions in $C^\infty_c(D)$, are usually {\it not} in the domain of the generator of the Dirichlet heat kernel  for $D$, see Baeumer, Luks and Meerschaert
\cite[Section~2]{2016arXiv160406421B}.
We further refer to Servadei and Valdinoci \cite{MR3161511}, to Klimsiak and Rozkosz \cite{MR3415028} and to
D\l{}otko, Kania and Sun \cite{MR3338310} for discussions of various notions of solutions to nonlocal equations, and to Barles, Chasseigne, Georgelin and Jakobsen \cite{MR3217703} for several approaches to the nonlocal Neumann problem.

From now on, we assume that $|D^c| \geq 0$. This will be occasionally important, e.g., to select a point in $D^c$ satisfying certain condition which is known to hold $a.e.$
We say that the {\it extension problem}
for $g$, $D$ and $\nu$ (or $L$) has a solution
if the exterior condition $g$ has an extension $u\in\vd$. If this is so, then the existence and uniqueness of the solution to \eqref{eq:Dp} comfortably follow from the general Lax--Milgram theory  (see \cite{MR3318251,MR3738190} and Section \ref{sec:pfs} below).
We may thus focus on the extension problem.
Here the main difficulty is
to define
$u$ on $D$
and
control the
Sobolev smoothness of $u$
 by
that of
$g$. We note here that the extension may as well be used to prove the existence of weak solutions of the non-homogeneous Dirichlet problem
\begin{equation}\label{eq:nhDp}
\left\{
\begin{array}{ll}
Lu=f & \mbox{ in }  D,\\
u=g & \mbox{ on } D^c.
\end{array}\right.
\end{equation}
Here by a weak solution we mean any function $u\in \mathcal{V}^D$ equal to $g$ on $D^c,$ which satisfies $\ED(u,\phi) = \int f\phi\,{\rm d}x$ for every $\phi\in\mathcal{V}_0^D$; this integral  is finite, e.g., if $D$ is bounded and $f\in L^2(D)$, see \cite{MR3318251,MR3738190}.

{Below we characterize
the existence of the solution to the extension problem
by the finiteness of a quadratic Sobolev form $\HD(g,g)$
with a specific
weight $\gamma_D$ on $D^c\times D^c$ defined below and called the {\it interaction kernel}. We also give the corresponding trace theorem. Thus,
we determine functions $g:D^c\to\mathbb R$ that can be extended to a function from $\vd$. In this connection we mention the
result  of
Dyda and Kassmann \cite[Theorem 3]{2016arXiv161201628K}, who
use the Whitney decomposition to solve
the extension and trace problems for $\Delta^{\alpha/2}$. In \cite{2016arXiv161201628K}, the role of  $\gamma_D(x,y)$ is essentially played  by $r(x,y)^{-d-\alpha}$, where
	$r(x,y) = \delta_D(x) + |x-y| + \delta_D(y)$ and $\delta_D(x) = \mbox{dist}\,(x, \partial D)$,  $x, y\in \Rd$. Our result concerns much more general operators $L$. We also prove an identity for the energy of $g$ and the energy of the solution with exterior boundary values $g,$ generalizing the classical {\em Douglas identity} from \cite{MR1501590}.
This result is new even for $\Delta^{\alpha/2}$ in bounded smooth domains.

For information on  extension theorems for local operators we refer the reader to the book of Adams and Fournier \cite{adams2003sobolev}, and
to Ka\l{}amajska and Dhara \cite{MR3458759} and
Koskela, Soto and Wand \cite{2017arXiv170804902K}.

The structure of the paper is as follows.  In
 Section 2 we present the notation, definitions and three main theorems: we start with  Theorem \ref{th:wazne}, which is a Hardy--Stein identity  for $\nu$ (and $L$) generalizing  the formula \cite[(6)]{MR3251822} proved by Bogdan, Dyda and Luks for  $\Delta^{\alpha/2}$,  then we state Theorem \ref{th:toz}, giving the extension and trace  result for $\nu,$ together with the Douglas-type formula
\eqref{eq:SHS}, and finally  we present Theorem~\ref{th:gamma-est}, concerning estimates of the interaction kernel $\gamma_D.$ As a preparation for the proofs,  in Section~\ref{sec:P} we give auxiliary definitions and results.
Then, in Section~\ref{sec:harm} we discuss harmonic functions of $L.$ Several notions of harmonicity are given in Definitions \ref{def:harm} and \ref{def:weakdistr},}  Properties of $L$-harmonic functions are given in Lemma \ref{lem:harmonic}-\ref{th:GKrep}, \ref{lem:harmlu}-\ref{lem:BDL}, and Theorem \ref{lem:harmc2}.  This section is concluded with the proof of  Theorem~\ref{th:wazne}.
In Section~\ref{sec:pfs} we give the proof of the extension and trace  for $\vd$ and
we verify the Douglas formula
from Theorem \ref{th:toz} (see also Corollary \ref{cor:ext}). In Theorem \ref{th:equivalence} we prove the equivalence of various notions of harmonicity for $u\in\mathcal V^D.$
In Section \ref{sec:ikg}  we estimate the interaction kernel $\gamma_D$ for bounded $C^{1,1}$ sets  -- in  Theorem \ref{th:gamma-est}  -- and for  the half-spaces  -- in Theorem \ref{prop:gammaHalfspace}.
In Section \ref{sec:examples} we give specific examples of $\nu$ for which our results apply. In the Appendix we prove auxiliary facts needed to treat $\nu$ and $L$ in the  present generality.
The reader  interested in the general ideas
may focus on $\Delta^{\alpha/2}$. Even in this case the Douglas formula is a
remarkable
conservation law for squared increments of harmonic functions.

In the sequel we will often use the probabilistic language and results
from the potential theory of L\'evy stochastic processes.
This may be avoidable but dramatically reduces the effort needed
 to define and handle such objects as harmonic functions,  Green function and Poisson kernel  for the considered general operators $L$.  Furthermore, the probabilistic setting facilitates integration in spaces with many coordinates and  proofs of the convergence of integral quantities for approximations of $D$ by subsets. Therefore we ask the analytic-oriented reader to bear with us, especially that we managed to largely avoid the language of probability in the statements of our results.

{\bf Acknowledgements.}
We thank  Tomasz D\l{}otko, Bart\l{}omiej Dyda, Damian Fafu\l{}a, Mateusz Kwa\'snicki, Moritz Kassmann, \L{}ukasz Le\.zaj, Tomasz Luks,  Andrzej Rozkosz, and Paul Voigt
for enlightening  discussions. Special thanks are due to Agnieszka Ka\l{}amajska for many discussions on the local Dirichlet problems and extension and trace theorems.

\section{Main results}
Here are
additional assumptions on $\nu\colon [0,\infty)\to (0,\infty]$ which will  sometimes
be made in the sequel.
\begin{description}
\item[A1]
$\nu$ is twice continuously differentiable and there is a  constant $C_1$ such that
\begin{equation*}\label{e:on}
    |\nu'(r)|, |\nu''(r)|\leq C_1\nu(r) \quad \text{for}\; r>1.
    \end{equation*}
\item[A2]  There exist constants $\beta\in (0,2)$ and $C>0$  such that		
\begin{eqnarray}\label{eq:nuSc}\nu(\lambda r)&\leq& C \lambda^{-d-\beta}\nu(r) ,\qquad 0<\lambda,\, r\leq 1,\\ \label{es:nuSc1}
     \nu(r)&\leq& C \nu(r+1), \qquad\quad r\geq 1.
    \end{eqnarray}
\item[{A3}] There exist constants $\alpha\in(0,2)$ and $c>0$  such that
\begin{equation}\label{eq:scalNualoc}\nu(\lambda r)\ \ \geq\ \ c\lambda^{-d-\alpha}\nu(r),
\qquad 0<\lambda,\, r\leq 1.
\end{equation}

\end{description}
Here and below by a {\it constant} we mean a strictly positive number.
Recall that
$\nu(z,w)=\nu(z-w)=\nu(|z-w|)$ for $z, w\in \Rd$. We also denote $\nu(A)=\int_A \nu(|z|)\dz$ and $\nu(z,A) = \nu(A-z)$ for $z\in\mR^d$, $A\subset \Rd$.
We always assume that $L$ and {\it unimodal} $\nu$ are related by \eqref{eq:L-def}.
Clearly, {\rm \textbf{A1}},  {\rm \textbf{A2}} and {\rm \textbf{A3}} hold true if $L=\Delta^{\alpha/2}$. Further examples of L\'evy measure densities $\nu$ satisfying these assumptions are given in Section \ref{sec:examples}.
The condition
{\rm \textbf{A1}} is used for the proof of the
fact that harmonic functions of $L$ (see Definition \ref{def:harm}) are twice continuously differentiable. We note that {\rm \textbf{A1}} implies that for every $s>0$ there
is a (positive finite) constant $C_s$ such that
\begin{equation}\label{e:ons}
|\nu'(r)|, |\nu''(r)|\leq C_s\nu(r), \qquad r\ge s.
\end{equation}
The condition \eqref{eq:nuSc} in {\rm \textbf{A2}} is equivalent to the assumption that $r^{d+\beta}\nu(r)$ is almost increasing on $(0,1]$ in the sense of \cite[Section~3]{MR3165234}, and \eqref{eq:scalNualoc} means that $r^{d+\alpha}\nu(r)$ is almost decreasing on $(0,1]$.

Let $G_D(x,y)$  be the Green function of $D$ for $L$
and
let $\omega_D^x(\cdot)$ be the harmonic measure of $D$ for $L$
(for details see Section~\ref{sec:P}). The first result, a crucial technical tool in our development, is a Hardy--Stein type identity for $L$-harmonic functions, as defined Definition \ref{def:harm} below. The result extends \cite[(6)]{MR3251822}  from $\Delta^{\alpha/2}$ to $L$ and
identifies the
Hardy-type square norm (on the left)
with a Sobolev-type square norm weighted by $G_D$ (on the right).
\begin{theorem}\label{th:wazne}
Assume {\rm \textbf{A1}}. If $u$ is $L$-harmonic in $D$ and $x\in D$, then
		\begin{equation}\label{eq:BDL-1}
		\sup_{x\in U\subset\subset D} \int_{U^c}u^2(z) \, \omega^x_U({\rm d}z) = u(x)^2 + \int_D G_D(x,y)\pord (u(z) - u(y))^2 \nu(z,y)\, \dz \dy.
		\end{equation}
\end{theorem}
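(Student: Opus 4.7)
My plan is to apply Dynkin's formula to $u^2$ on compactly contained subdomains $U\subset\subset D$ and then pass to a monotone limit as $U\nearrow D$.

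First, I would compute $L(u^2)$ pointwise on $D$. Using the algebraic identity $u^2(z)-u^2(y) = (u(z)-u(y))^2 + 2u(y)(u(z)-u(y))$ and $Lu(y)=0$ for $y\in D$, one obtains
\[
L(u^2)(y) \;=\; \pord (u(z)-u(y))^2\,\nu(z,y)\,\dz + 2u(y)Lu(y) \;=\; \pord (u(z)-u(y))^2\,\nu(z,y)\,\dz \;\ge\;0,
\]
so $u^2$ is $L$-subharmonic on $D$. This is the algebraic core of the identity: the cross term vanishes by harmonicity of $u$, leaving a square.

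Second, for each $U\subset\subset D$ containing $x$, Dynkin's formula applied to $u^2$ and the L\'evy process $X$ generated by $L$, stopped at $\tau_U$, yields
\[
\mE^x u^2(X_{\tau_U}) \;=\; u^2(x) + \int_U G_U(x,y)\pord (u(z)-u(y))^2\,\nu(z,y)\,\dz\,\dy,
\]
while the left side equals $\int_{U^c}u^2(z)\,\omega^x_U(\dz)$ since $X_{\tau_U}\in U^c$ almost surely. This is the $U$-level version of \eqref{eq:BDL-1}.

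Third, I would take an exhaustion $U_n\subset\subset D$ with $U_n\nearrow D$. On the right, $G_{U_n}(x,y)\nearrow G_D(x,y)$ (extending by zero off $U_n$), and since the $z$-integrand is non-negative, monotone convergence produces the right-hand side of \eqref{eq:BDL-1}. On the left, the $L$-subharmonicity of $u^2$ combined with the strong Markov property shows that $\mE^x u^2(X_{\tau_{U_n}})$ is non-decreasing in $n$, so its limit coincides with the supremum $\sup_{x\in U\subset\subset D}\int_{U^c}u^2(z)\,\omega^x_U(\dz)$. The common monotone limit of the two sides is the claimed identity.

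The main obstacle is justifying the Dynkin formula in sufficient generality, because $u^2$ need not lie in the global domain of the generator of $X$. To address this, I would invoke the $C^2$ regularity of $L$-harmonic functions provided by Theorem~\ref{lem:harmc2} (which uses condition {\bf A1}), so that $L(u^2)$ is pointwise well-defined and continuous on $U\subset\subset D$. The Dynkin identity for the stopped process then follows by a localization argument: approximate $u^2$ by bounded functions agreeing with $u^2$ on a neighborhood of $\overline{U}$ and lying in the domain of the generator, apply Dynkin to the approximants, and pass to the limit using the L\'evy integrability $\int (1\wedge|z|^2)\nu(z)\,\dz<\infty$ to control the compensator of the jumps, together with monotone/dominated convergence on the right-hand side. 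The supremum formulation in \eqref{eq:BDL-1} automatically absorbs the case when either side is infinite, so no extra care is needed there.
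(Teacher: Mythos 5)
Your argument for the integrable case is essentially the paper's: compute $Lu^2(y)=\int_{\R^d}(u(z)-u(y))^2\nu(z,y)\,\dz$ on $U$ (the cross term vanishes by harmonicity), apply Dynkin's formula on $U\subset\subset D$ using the $C^2$ regularity of Theorem~\ref{lem:harmc2} and Lemma~\ref{lem:dynk}, and then pass to the limit with $\sup_U G_U=G_D$ and monotone convergence; this is Lemma~\ref{lem:BDL} (Case 2) plus the short argument in the paper's proof of Theorem~\ref{th:wazne}. (Two small remarks on your execution: the paper does not need monotonicity of the left-hand side, since once the $U$-level identity holds for every $U$ one may simply take suprema of both sides; and your proposed localization ``approximate $u^2$ by bounded functions agreeing with $u^2$ near $\overline U$'' is delicate because $L$ is nonlocal and $u$ need not be locally bounded on $D^c$ --- the clean route is to verify the hypotheses of Lemma~\ref{lem:dynk} for $u^2$, which is exactly the tail condition $\int_{B_\rho^c}u^2\nu<\infty$.)

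The genuine gap is your final claim that ``the supremum formulation automatically absorbs the case when either side is infinite.'' It does not. Harmonicity only gives $\mE^x|u(X_{\tau_U})|<\infty$, and it can happen that $\int_{U^c}u(z)^2\nu(x,z)\,\dz=\infty$, hence $\mE^x u^2(X_{\tau_U})=\infty$, for every $U\subset\subset D$ containing $x$. In that case Dynkin's formula for $u^2$ is unavailable (the integrability hypotheses of Lemma~\ref{lem:dynk} fail and $Lu^2$ need not be finite), so your scheme produces no $U$-level identity at all; taking suprema only tells you that the left side of \eqref{eq:BDL-1} is $+\infty$, and the asserted equality still requires a proof that the right side is $+\infty$, i.e.\ that $\int_{\R^d}(u(z)-u(y))^2\nu(z,y)\,\dz=\infty$ for $y$ in a set of positive measure where $G_D(x,y)>0$. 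This is not automatic: from divergence of $\int_{U^c}u^2(z)\nu(x,z)\,\dz$ one must transfer the divergence from the base point $x$ to nearby points $y$ (this is where \textbf{A2} is used, via $\nu(y,z)\gtrsim\nu(x,z)$ for $z\in U^c$ and $y$ close to $x$) and then pass from $u^2(z)$ to the increment $(u(y)-u(z))^2$, e.g.\ by restricting to the set $A(y)=\{z\in U^c:(u(y)-u(z))^2\ge\tfrac12 u(z)^2\}$, whose complement contributes a finite integral. This is precisely Case 1 in the paper's proof of Lemma~\ref{lem:BDL}; without it, your argument proves \eqref{eq:BDL-1} only under the extra hypothesis that $\mE^x u^2(X_{\tau_U})<\infty$ for all $U\subset\subset D$.
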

Theorem~\ref{th:wazne}
is
proved in Section \ref{sec:harm} by using recent regularity results  of Grzywny and Kwa\'snicki \cite{MR3729529} for $L$-harmonic functions.

We next define
\begin{equation}\label{eq:Pk}
P_D(x,z)=\int_D G_D(x,y)\nu(y,z)\,\dy,\quad x\in D,\ z\in D^c,
\end{equation}
the Poisson kernel of $D$ for $L$. For details on $G_D$ and $P_D$, see Section 3.4.

For $g\colon D^c\mapsto \R$ we
let $P_D[g](x)=g(x)$ for $x\in D^c$ and
\begin{equation}\label{eq:exg}
P_D[g](x)=
\int_{D^c} g(y)P_D(x,y)\,\dy\quad \mbox{ for }\   x\in D,
\end{equation}
if the integrals exists.
 This is the Poisson extension of $g$ and
$\int_{D^c} g(y)P_D(x,y)\dy$
is the Poisson integral.
We define the intensity of interaction of $w,z\in  D^c$ via $D$, in short, the interaction kernel,
$$
\gamma_ D(w,z)
= \int_ D \int_ D \nu(w,x)G_ D(x,y) \nu(y,z)\,\dx\dy= \int_ D \nu(w,x)P_ D(x,z)\,\dx = \int_D \nu(z,x)P_D(x,w)\,\dx.
$$
In particular, $\gamma_D(z,w) = \gamma_D(w,z)$.
The reader may also directly verify the following result.
\begin{example}\label{ex:Cauchy}
{\rm
Let $d=1$, $D=(0,\infty)\subset \R$ and $\nu(w,x)=\pi^{-1}|x-w|^{-2}$, $x,w\in \R$,  i.e.,  $L=\Delta^{1/2}$. Then
$P_{(0,\infty)}(x,z)=\pi^{-1} x^{1/2}|z|^{-1/2}(x-z)^{-1}$ for $x>0$, $z<0$ (see Bogdan \cite[(3.40)]{MR1704245}), and
	  $$\gamma_{(0,\infty)}(z,w)=\int^\infty_0\frac{1}{\pi^2}\frac{\sqrt{x}}
	 {\sqrt{|z|}}\frac{\dx}{(x-z)(x-w)^2}=\frac{1}{2\pi\sqrt{zw}(\sqrt{|z|}+\sqrt{|w|})^2}, \qquad z,w < 0.$$
}
\end{example}

\noindent For
$g\colon D^c\to\mathbb R$
we let
\begin{equation}\label{eq:H-def}
\HD (g,g)=
\tfrac12\!\!\!\!\iint\limits_{ D^c\times  D^c} (g(w)-g(z))^2\gamma_ D (z,w)\,\dw\dz.
\end{equation}
We define
$$\xd=\{g\colon D^c\to\mathbb R:\;
 \mathcal \HD(g,g)<\infty\}.$$
Spaces similar to $\vd$ and $\vd_0$  were considered in \cite{MR3318251, MR3738190}. The space $\xd$ is new ($\mathcal{X}$ stands for  eXterior).
It is an analogue of the classical trace space $H^{1/2}(\partial D)$
\cite{MR1301021}.

Part of our development calls for geometric assumptions on $D$, which we detail in
Section~\ref{sec:P}.
In particular, the $C^{1,1}$ condition for the ``smoothness'' of $D$ and
the volume density condition (VDC) for the ``fatness'' of $D^c$ are defined there.

Our second theorem is in fact the main result of the paper,
on the extension and trace operators between $\vd$ and $\xd$.
\begin{theorem}\label{th:toz}
Let  $D\subset \Rd$ be open, $D^c$ satisfy VDC, $|\partial D|=0$, and
unimodal $\nu$ satisfy {\rm \textbf{A1}}, {\rm \textbf{A2}}.
\begin{itemize}
	\item[{\rm (i)}]
If
$g\in \xd$, then $P_D[g]\in \vd$ and $\ED(P_D[g],P_D[g])=\HD(g,g)$.
\item[{\rm (ii)}] If $u\in \vd,$ then $g = u|_{D^c} \in \xd$ and $\ED(u,u)\ge\HD(g,g)$.
\end{itemize}
\end{theorem}
Thus, under  the assumptions in {\rm (i)}  we have
\begin{equation}\label{eq:SHS}
\tfrac12\!\!\!\!\!\!\!\!\!\! \iint\limits_{\mathbb R^d\times\mathbb R^d\setminus   D^c\times  D^c}
(P_D[g](x)-P_D[g](y))^2 \nu(x,y)\,\dx\dy
=\tfrac12\!\!\!
 \iint\limits_{ D^c\times  D^c} (g(w)-g(z))^2\gamma_D(z,w)\,\dw\dz.
\end{equation}
The proof of Theorem~\ref{th:toz} is given in Section~\ref{sec:pfs} by using Theorem~\ref{th:wazne}.
By analogy with the classical situation \cite[(1.2.18)]{MR2778606}, we call  \eqref{eq:SHS} the {\em Douglas-type formula.}
We note that Douglas integrals for general Dirichlet forms
are also discussed by Fukushima and Chen \cite[Sections~5.5-5.8 and 7.2]{MR2849840} but the form $\mathcal{V}^D$ treated here is new.

\begin{example}\label{ex:Cauchycd}
{\rm
In the setting of Example~\ref{ex:Cauchy}, let $u(x) = g(x)$ for $x\leq 0$, and
$$
u(x)=\int_{-\infty}^0 \frac{\sqrt{x} g(z)\,\dz}{\pi(x-z)\sqrt{|z|}}\quad \mbox{ for } x>0.
$$
Thus, if the above integral is absolutely convergent, then by \eqref{eq:SHS} we get
$$
\iint\limits_{x>0 \text{ or } y>0} \frac{(u(x)-u(y))^2}{\pi(x-y)^2}\,\dx\dy
=\iint\limits_{z<0 \text{ and } w<0}\frac{(g(z)-g(w))^2}{2\pi\sqrt{zw}(\sqrt{|z|}+\sqrt{|w|})^2}\,\dz\dw.$$
}
\end{example}
We note that $\HD(g,g)$ in Theorem~\ref{th:toz} may be finite even for rather rough functions. Indeed,
\begin{align*}
	\HD(g,g) &= \int_{D^c}\int_{D^c}(g(z) - g(w))^2 \gamma_D(z,w)\, \dz \dw \le\ 2\int_{D^c}\int_{D^c} g^2(z) \gamma_D(z,w)\,\dz\dw\\
	&=2\int_{D^c}\int_{D^c} g^2(z) \int_D \nu(z,x)P_D(x,w)\,\dx\dz\dw = 2\int_{D^c} g^2(z) \rho(z)\,\dz,
\end{align*}
where
$\rho(z)=\int_D \nu(z,x)dx$.
In particular, if $g$ is $L^2$-integrable and ${\rm dist}(D,{\rm supp}\, g)>0$,
then $\HD(g,g)<\infty$ and so $g$ has an extension $u\in\vd$. On the other hand $\mathcal{E}_{\mR^d}(u,u) = \infty$ in general for such $g$.
Similarly, if $L=\Delta^{\alpha/2}$ and $D$ is a bounded $C^{1,1}$ set, then $\rho(z)\approx \delta_D(z)^{-\alpha}(1+|z|)^{-d}$, and so
$\HD(g,g)<\infty$ if
$g$ is merely bounded and
$\alpha<1$.

For full analysis of the extension problem  precise estimates of $\gamma_D$ are necessary.
Below we propose sharp explicit estimates of $\gamma_D(z,w)$ for bounded open sets $D$ of class $C^{1,1}$.
For $r>0$ we  let
\begin{eqnarray}\label{eq:Kdef}
K(r)&=&\int_{|z|\leq r}\frac{|z|^2}{r^{2}}\nu(z)\,\dz,\qquad h(r)= K(r)+\nu(B_r^c)
=\int_{\Rd}\left(\frac{|z|^2}{r^{2}}\wedge 1\right)\nu(z)\,\dz,\\
 V(r)&=&\frac{1}{\sqrt{h(r)}}.\label{eq:Vdef}
\end{eqnarray}
Note that $K, h > 0$.
\begin{example}
{\rm
For $\Delta^{\alpha/2}$ we have $K(r)=cr^{\alpha}$ and $V(r)=c'r^{\alpha/2}$ with some constants $c,c'$.
}
\end{example}
Here is our third main result.
\begin{theorem}\label{th:gamma-est}
Let $\nu$ be unimodal and assume {\rm {\rm \textbf{A2}}, {\rm \textbf{A3}}}. Let $D$ be a bounded  $C^{1,1}$ set. Then,
$$\gamma_D(z,w)\approx \begin{cases}
\nu({\delta_D(w)})\hspace{0.03cm}\nu({\delta_D(z)}),&\mbox{ if }\quad \diam(D)\leq {\delta_D(z),\delta_D(w)},\\
\nu(\delta_D(w))/V(\delta_D(z)),&\mbox{ if }\quad {\delta_D(z)}<\diam(D)\leq {\delta_D(w)},\\
\nu(r(z,w))V^2(r(z,w))/\left[ V\left(\delta_D(z)\right)V\left(\delta_D(w)\right)\right], &\mbox{ if }\quad  {\delta_D(z),\delta_D(w)}<\diam(D).
\end{cases}$$
\end{theorem}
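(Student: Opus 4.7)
My plan is to start from the representation
$$\gamma_D(w,z) \;=\; \int_D \nu(w,x)\,P_D(x,z)\,\dx \;=\; \iint_{D\times D}\nu(w,x)\,G_D(x,y)\,\nu(y,z)\,\dx\,\dy,$$
and to feed in known sharp two-sided estimates of the Poisson kernel $P_D$ and the Green function $G_D$ of a bounded $C^{1,1}$ open set $D$ under the scaling assumptions \textbf{A2}, \textbf{A3}, which are available in the literature expressed in terms of $V$, $\nu$, and $\delta_D$. The almost-monotonicity of $r\mapsto r^{d+\alpha}\nu(r)$ and $r\mapsto r^{d+\beta}\nu(r)$ on $(0,1]$ (equivalent to \textbf{A2}, \textbf{A3}), together with \eqref{es:nuSc1}, yields a doubling-type control that allows me to replace $\nu(|x-w|)$ by $\nu(\delta_D(w))$ on subregions of $D$ where $|x-w|\approx \delta_D(w)$.

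The two non-critical cases are easy. If $\delta_D(z),\delta_D(w)\ge \diam(D)$, then for every $x\in D$ we have $|x-z|\approx \delta_D(z)$ and $|x-w|\approx \delta_D(w)$, so by doubling $\nu(w,x)\approx \nu(\delta_D(w))$ and $\nu(z,y)\approx \nu(\delta_D(z))$ uniformly in $x,y\in D$. The double-integral representation then yields
$$\gamma_D(w,z)\;\approx\; \nu(\delta_D(w))\,\nu(\delta_D(z))\,\iint_{D\times D}G_D(x,y)\,\dx\,\dy,$$
and the remaining integral is a finite positive constant depending only on the fixed $D$. In the intermediate case $\delta_D(z)<\diam(D)\le \delta_D(w)$, the $w$-factor still satisfies $\nu(w,x)\approx \nu(\delta_D(w))$ uniformly in $x\in D$, so $\gamma_D(w,z)\approx \nu(\delta_D(w))\int_D P_D(x,z)\,\dx$; inserting the Poisson kernel estimates then shows that $\int_D P_D(x,z)\,\dx\approx 1/V(\delta_D(z))$, with the dominant contribution coming from a boundary strip of $D$ near the projection of $z$.

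The delicate case is $\delta_D(z),\delta_D(w)<\diam(D)$. I would partition $D$ according to the three relevant scales $\delta_D(z)$, $\delta_D(w)$ and $|z-w|$, whose sum is $r(z,w)$, using the sharp Poisson kernel estimate for $P_D(x,z)$ together with the companion factor $\nu(|x-w|)$ arising from $\nu(w,x)$. In each regime of the partition, the triangle inequality together with almost-monotonicity forces one of $\nu(|x-z|),\nu(|x-w|)$ to be comparable to $\nu(r(z,w))$; the transverse integration of $V(\delta_D(x))$ over a boundary layer of width $\approx r(z,w)$ supplies a factor $V^2(r(z,w))$; and the boundary decays $1/V(\delta_D(z))$ and $1/V(\delta_D(w))$ are inherited from the Poisson kernel asymptotics at $z$ and $w$ respectively.

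The main obstacle lies entirely in this third case, and specifically in producing matching lower bounds. The upper bound is moderately straightforward once the partition is set. The lower bound demands exhibiting an explicit subregion of $D$, of volume $\approx r(z,w)^d$, on which $\delta_D(x)\approx r(z,w)$ and $|x-z|,|x-w|\approx r(z,w)$ simultaneously; the $C^{1,1}$ regularity of $\partial D$ (producing inner and outer tangent balls of uniform radius) is precisely what guarantees the existence of such a good subregion. The subsequent bookkeeping of the three factors $V(\delta_D(x))$, $\nu(|x-z|)$, $\nu(|x-w|)$ across the various regimes, although lengthy, is a routine consequence of the almost-monotonicity supplied by \textbf{A2}, \textbf{A3}.
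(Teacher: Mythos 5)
Your skeleton (three cases split by the sizes of $\delta_D(z),\delta_D(w)$, with sharp Green/Poisson bounds as input) matches the paper's, and the two easy cases are essentially right: there, as in the paper, the comparison $\nu(|x-w|)\approx\nu(\delta_D(w))$ comes from monotonicity plus the additive bound $\delta_D(w)\le|x-w|\le\delta_D(w)+\diam(D)$ and finitely many applications of \eqref{es:nuSc1} (a multiplicative ``doubling'' of $\nu$ at large scales is \emph{not} available under \textbf{A2}, so phrase it that way), and $\int_D P_D(x,z)\,\dx\approx 1/V(\delta_D(z))$ is exactly what the paper proves via exit-time bounds and an interior ball. The genuine gap is your lower bound in the critical case $\delta_D(z),\delta_D(w)<\diam(D)$. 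Restricting to a single region $W$ of volume $\approx r^d$ (with $r=r(z,w)$) on which $\delta_D(x)\approx|x-z|\approx|x-w|\approx r$ gives, with $P_D(x,z)\approx\frac{V(\delta_D(x))}{V(\delta_D(z))}|x-z|^{-d}$ and $\nu(r)V^2(r)r^d\approx 1$, only $\int_W\nu(x,w)P_D(x,z)\,\dx\approx \nu(r)V(r)/V(\delta_D(z))$, which misses the claimed bound by the unbounded factor $V(r)/V(\delta_D(w))$; with the double-integral (Green-function) form it is worse still, yielding only $\approx\nu(r)$. The missing mass does not sit at the single scale $r$: it accumulates over \emph{all} scales near the projection of $w$, since $\int_{D\cap\{|x-w|\lesssim r\}}V(\delta_D(x))\,\nu(x,w)\,\dx\approx\int_{\delta_D(w)}^{r}V(s)\nu(s)s^{d-1}\,\ds\approx 1/V(\delta_D(w))$ by \eqref{eq:Vprim}, so a dyadic multi-scale argument near $w$ (and symmetrically near $z$) is indispensable. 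Moreover, when $r(z,w)\approx\diam(D)$ and $D$ is thin (small $C^{1,1}$ scale $R$ relative to $\diam(D)$), a region of volume $\approx r(z,w)^d$ with $\delta_D(x)\approx r(z,w)$ need not exist at all, so even the stated ``good region'' is not always available; and the factor $1/V(\delta_D(w))$ cannot be ``inherited from the Poisson kernel asymptotics at $w$'', because in your representation only $P_D(\cdot,z)$ appears -- the $w$-dependence enters solely through $\nu(w,\cdot)$.

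The paper sidesteps this bookkeeping by a different device, which you would need to either adopt or replace by the dyadic-layer computation above: after inserting the Green and Poisson bounds, the whole $x$-integral is recognized as comparable to $\int_D G_D(\tilde z,x)\nu(x,w)\,\dx=P_D(\tilde z,w)$, where $\tilde z\in D$ is the reflection of $z$ across the boundary (so $\delta_D(\tilde z)\approx\delta_D(z)$ and $|\tilde z-w|\approx r(z,w)$); the replacement of $z$ by $\tilde z$ in the integrand is justified by a separate estimate on the small ball $B(\tilde z,\delta_D(z)/2)$, and then the already established two-sided Poisson kernel estimate at $w$ delivers upper and lower bounds simultaneously, in particular the factor $1/V(\delta_D(w))$. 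As written, your argument proves the upper bound and only a strictly weaker lower bound, so the sharp two-sided estimate in the third case does not follow.
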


As customary in the boundary potential theory, it challenging to handle unbounded and less regular sets $D$, cf. Bogdan, Grzywny, and Ryznar \cite{MR3249349}. In Theorem~\ref{prop:gammaHalfspace} below
we give estimates for $\gamma_H(z,w)$, where $H$ is the half-space in dimensions $d\geq 3$. Other extensions are left for future.

\section{Preliminaries}\label{sec:P}

\subsection{Functions and constants}
In Theorem \ref{th:gamma-est} and below in this paper
we write $f(x)\approx g(x)$, or say that functions $f$ and $g$ are {\it comparable}, if $f, g\ge 0$ and there is a number $C\in (0,\infty)$, called the comparability constant,
such that $C^{-1}f(x)\le g(x)\le C f(x)$ for all the considered arguments  $x$. Such comparisons are also called sharp estimates.
Similarly, $f(x)\lesssim g(x)$  means that $f(x)\leq C g(x)$, the same as  $g(x)\apprge f(x)$.
We write $C=C(a,\ldots,z)$ if the constant $C$
may be so chosen to depend only on $a,\ldots,z$ and we write $C_a$ to emphasize that $C$ may depend on $a$.

We let $C_c(D)$ be the class of continuous functions: $\Rd\to \R$ with compact support contained in $D$  and we let $C_0(D)$  be the closure of $C_c(D)$ in the supremum norm.
By $C_c^\infty (D)$ we denote the class of  infinitely differentiable functions, compactly supported in $D$.
We write $f\in C^2(\overline U)$ if $f\colon\overline U \to \R$ extends to a twice continuously differentiable function in a neighborhood of $\overline{U}$.

\subsection{Geometry}
Recall that $D$ is an open nonempty subset of $\Rd$, the Euclidean space of dimension $d\in \N$, and $D$ has a positive volume.
We write $U\subset\subset D$ if $U$ is an open set, its closure $\overline{U}$ is bounded, hence compact, and $\overline{U}\subset D$.
Let $B(x,r)=\{y\in \Rd: |x-y|<r\}$,
the open ball with radius $r>0$ and center at $x\in \Rd$.
We let $B_r=B(0,r)$ and consider the Poisson kernel of the ball
$P_{B_r}(z):=P_{B_r}(0,z)$, $z\in B_r^c$, see Subsection \ref{sec:S}. We also denote
$B^c_r=(B(0,r))^c$, $\overline{B}^c_r=(\ \! \overline{B(0,r)}\ \! )^c=\{y\in \Rd: |x-y|> r\}$, and $\omega_d=2\pi^{d/2}/\Gamma(d/2)$, the surface measure of the unit sphere in $\Rd$.
\begin{definition}\label{def:VDC}
	We say that $D^c$ satisfies the volume density condition {\rm(VDC)} if there is $c>0$ such that for every $r>0$ and $x\in \partial D$,
	\begin{equation}\label{eq:VDC}
	|D^c \cap B(x,r)| \geq cr^{d}.
	\end{equation}
\end{definition}
This is a fatness-type condition for $D^c,$ uniform in $r$ and $x$. For instance, VDC holds if $D$ satisfies a suitable exterior cone condition.
We say that VDC holds {\it locally} for $D^c$ if VDC holds for $(D\cap B)^c$ for every ball $B$. 	
For instance if $D=\{x\in~\Rd:~ |x|>~ 1\}$, then VDC holds locally for $D^c$.
Naturally, if VDC holds for $D^c$, then VDC holds locally for $D^c$ because $\partial(D\cap B)\subset \partial D\cup \partial B$ and $(D\cap B)^c=D^c\cup B^c$ for every ball $B$. For the sake of the following definition, we denote $\mR^0 = \{0\}$, a singleton.
\begin{definition}\label{def:cont}
	We say that an open set $D\subseteq \mR^d$ has continuous boundary if
		$\partial D$ is compact and
		there exist open sets $U_1,\ldots,U_m,\, \Omega_1,\ldots,\Omega_m\subseteq \mR^d$, continuous functions $f_1,\ldots,f_m\colon \mR^{d-1} \rightarrow \mR,$ and rigid motions $T_1,\ldots,T_m\colon \mR^d \rightarrow\mR^d$, such that
$\partial D \subseteq \bigcup_{i=1}^m U_m,$ and for $i=1,\ldots,m$, we have $T_i(\Omega_i) = \{(x',x_d)\in \mR^{d-1}\times\mR^d: f_i(x') < x_d\}$ and $D\cap U_i =\Omega_i\cap U_i$.
\end{definition}
Thus, locally, $D$ is isometric to $\Omega_i,$ the set above the graph of a continuous function.

A bounded open set is Lipschitz if the functions $f_i$ are Lipschitz: $|f_i(x')-f_i(y')|\le \lambda |x'-y'|$ for $x',y'\in \R^{d-1}$, $i=1,\ldots,m$.
If $D$ is a bounded Lipschitz open set, then VDC holds for $D^c$.
\begin{definition}\label{def:c11-set}
Let $D\subset \Rd$ be open.
If number $R>0$ exists such that for every $Q\in\partial D$ there are balls $B(x',R)\subset D$ and $B(x'',R)\subset D^c$ mutually tangent at $Q$, then $D$ is $C^{1,1}$ (at scale $R$).
\end{definition}
If $D$ is of class $C^{1,1},$ then it is Lipschitz and the defining functions $f_i$ can be so chosen that their gradient is Lipschitz, see Bogdan and Jakubowski \cite{MR2892584} for more on the geometry of $C^{1,1}$ open sets.

\subsection{Completeness}

\begin{lemma}\label{lem:l2d}  We have $\vd\subseteq L^2_{loc}(\mR^d)$.
	If $D$ is bounded, then $\vd\subseteq L^2(D)$.
\end{lemma}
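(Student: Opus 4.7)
The plan is to exploit the fact that the form $\ED$ still integrates across $D\times D^c$, and that the Lévy density $\nu$ is bounded below by a positive constant on bounded sets. Since $D$ is bounded, pick $R>0$ such that $D\subset B(0,R)$, and let $A=B(0,2R)\setminus \overline{B(0,R)}$. Then $A\subset D^c$, $|A|<\infty$, and for every $x\in D$, $y\in A$ one has $|x-y|\le 3R$, so by monotonicity of $\nu$,
\[
\nu(x,y)\ge \nu(3R)=:c>0.
\]

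Next, because $(A\times D)\cup (D\times A)$ is disjoint from $D^c\times D^c$, the assumption $u\in\vd$ gives
\[
\iint_{A\times D}(u(x)-u(y))^2\nu(x,y)\,\dx\,\dy \;\le\; 2\ED(u,u)<\infty.
\]
Combining with the lower bound on $\nu$, the unweighted integral $\iint_{A\times D}(u(x)-u(y))^2\,\dx\,\dy$ is finite. By Fubini, there exists (a set of positive measure of) $y_0\in A$ such that
\[
\int_D(u(x)-u(y_0))^2\,\dx <\infty,
\]
and $u(y_0)\in\mathbb{R}$ because elements of $\vd$ are real-valued functions.

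Finally, using the elementary inequality $u(x)^2\le 2(u(x)-u(y_0))^2+2u(y_0)^2$ and integrating over $D$, boundedness of $D$ yields
\[
\int_D u(x)^2\,\dx \;\le\; 2\int_D(u(x)-u(y_0))^2\,\dx + 2u(y_0)^2\,|D|<\infty,
\]
which is the desired conclusion $u\in L^2(D)$. There is no real obstacle here; the only point requiring care is verifying that $A$ is a genuine subset of $D^c$ with positive distance from $D$ (immediate from $D\subset B(0,R)$) so that $\nu$ is uniformly bounded away from zero on $D\times A$.
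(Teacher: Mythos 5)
Your proof is correct and follows essentially the same strategy as the paper's: bound $\nu$ below by a positive constant on a bounded region, use Fubini to find a single reference point $y_0$ with $\int_D(u(x)-u(y_0))^2\,\dx<\infty$, and finish with $a^2\le 2(a-b)^2+2b^2$. The only (cosmetic) difference is that the paper exploits the $D\times D$ increments directly, noting $\nu\ge\nu(\diam D)>0$ there and taking $y_0\in D$, whereas you route through the annulus $A\subset D^c$ and the $D\times D^c$ part of the form; both are equally valid, the paper's version just being a line shorter.
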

\begin{proof}
Let $\emptyset \neq U\subseteq D$ be  open and bounded. For $u\in \vd$ we have
	\[\int_D\int_{\mathbb R^d} (u(x)-u(y))^2 \nu(x,y)\,\dx\dy<\infty.\]
	In particular there is a $y_0\in D$ such that
	\[\int_{ U} (u(x)-u(y_0))^2\nu(x,y_0)\,\dx<\infty,\]

Since $\nu$ is unimodal and strictly positive and $U$ is bounded,
we have $\nu(x,y_0)\geq c>0$ for $x\in U$. Consequently, $\int_U (u(x)-u(y_0))^2\,\dx<\infty$.
For every $a,b\in \mathbb R$ we have $a^2\leq 2(a-b)^2+2b^2,$ hence
\[\int_U u(x)^2\,\dx\leq 2\int_U (u(x)-u(y_0))^2\,\dx+2|U|u(y_0)^2<\infty.\]

For bounded $D,$ the argument above holds true with $U$ replaced by $D.$
\end{proof}

In view of Lemma~\ref{lem:l2d} for bounded $D$  it is plausible to let
\begin{equation}\label{eq:norm2}
\|u\|_{\vd}=\sqrt{\|u\|_{L^2(D)}^2+ \ED (u,u)}.
\end{equation}
This is a seminorm, actually a norm on $\vd$, because if a nonzero function $u$ vanishes {\em a.e.} in $D$, then by the strict positivity of $\nu$, the increments between $D^c$ and $D$ yield a positive value of $\ED(u,u)$.
Furthermore, $\vd_0$
is a Hilbert space with this norm \cite[Lemma~2.3]{MR3318251}, \cite[Lemma~3.4]{MR3738190}. The completeness of $\vd_0$ is also a consequence of the completeness of $\vd$. The latter is not given in \cite{MR3318251, MR3738190}, but it was verified in \cite{MR3651008} for the fractional Laplacian. We present a short proof which  uses only the fact that $\nu$ is locally bounded away from zero.
\begin{lemma}\label{lem:vhil}
	If $D$ is bounded, then $\vd$ is complete with the norm $\|\cdot\|_{\vd}$.
\end{lemma}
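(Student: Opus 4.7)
The plan is to take an arbitrary Cauchy sequence $\{u_n\}\subset \vd$ with respect to $\|\cdot\|_{\vd}$ and produce a limit $u\in\vd$ in three stages: first handle convergence on $D$ via completeness of $L^2(D)$, then handle convergence on $D^c$ via a localized $L^2$-estimate extracted from $\ED$, and finally conclude $\vd$-convergence via Fatou's lemma applied to the form.

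Stage one is immediate: the $L^2(D)$ summand in \eqref{eq:norm2} gives some $v\in L^2(D)$ with $u_n\to v$ in $L^2(D)$, and passing to a subsequence one may assume $u_n\to v$ almost everywhere on $D$.

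The main obstacle, and the only real content of the proof, is stage two. Because $\ED$ is blind to increments inside $D^c\times D^c$, it cannot control $u_n|_{D^c}$ directly; the idea is to borrow control from $D$ using the strict positivity of $\nu$. I would fix any compact $K\subset D^c$ together with any compact $K'\subset D$ of positive measure (the case $D=\emptyset$ being trivial). Since $K\cup K'$ is bounded and $\nu$ is unimodal, there is a constant $c_{K,K'}>0$ with $\nu(x,y)\ge c_{K,K'}$ throughout $K'\times K$. Applying the pointwise bound $f(y)^2\le 2(f(y)-f(x))^2+2f(x)^2$ with $f=u_n-u_m$ and integrating over $K'\times K$ yields
\begin{equation*}
|K'|\,\|u_n-u_m\|_{L^2(K)}^2 \ \lesssim\ \ED(u_n-u_m,u_n-u_m) + \|u_n-u_m\|_{L^2(K')}^2,
\end{equation*}
with constant depending only on $K$, $K'$, $\nu$. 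Both summands tend to zero, so $\{u_n\}$ is Cauchy in $L^2(K)$ for every compact $K\subset D^c$. Exhausting $D^c$ by compacts such as $\overline{B_k}\cap D^c$ and a diagonal subsequence argument then yield $u_n\to u$ almost everywhere on $\R^d$ for some measurable $u$ with $u|_D=v$ a.e.

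Stage three is routine. For each fixed $m$ the integrand defining $\ED(u_n-u_m,u_n-u_m)$ converges pointwise a.e.\ on $\R^d\times\R^d\setminus D^c\times D^c$ as $n\to\infty$; Fatou's lemma gives
\begin{equation*}
\ED(u-u_m,u-u_m)\ \le\ \liminf_{n\to\infty}\ED(u_n-u_m,u_n-u_m),
\end{equation*}
and the right-hand side is arbitrarily small once $m$ is large by the Cauchy property. Combined with $L^2(D)$-convergence this yields $\|u-u_m\|_{\vd}\to 0$, and $\|u\|_{\vd}<\infty$ follows from the triangle inequality applied to a single $u_m$. The one delicate point throughout is the $L^2_{\mathrm{loc}}(D^c)$ bound in stage two, where one must import control from $D$ via the mixed rectangle $K'\times K$ on which $\nu$ is bounded below; the rest of the argument is mechanical.
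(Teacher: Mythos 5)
Your proof is correct and follows essentially the same route as the paper: the paper fixes $U\subset\subset D$ and bounds $\int_{D^c}u(y)^2\nu(y,U)\,\dy\lesssim \|u\|_{\vd}^2$ using $a^2\le 2(a-b)^2+2b^2$ and the local lower bound on $\nu$, which is exactly your mixed-rectangle estimate in slightly different clothing, and then concludes with an a.e.\ convergent subsequence and Fatou's lemma, as you do. The only cosmetic difference is that the paper phrases the exterior control as a weighted $L^2(D^c)$ bound with weight $\nu(\cdot,U)$ locally bounded below, while you work with compacts $K'\subset D$, $K\subset D^c$; the substance is identical.
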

\begin{proof}
If $\emptyset \ne U\subset\subset D$, then
\begin{align*}
 &\int_{D^c} u(y)^2 \nu(y,U) \,\dy = \int_{U}\int_{D^c} u(y)^2\nu(x,y)\,\dy\dx \\\leq\ &2 \int_{U}\int_{D^c}(u(x) - u(y))^2\nu(x,y)\,\dy\dx
  + 2 \int_{U}\int_{D^c}u(x)^2\nu(x,y)\,\dy\dx\\
\leq\ &4 \ED(u,u) + 2\int_{U}u(x)^2\nu(x,D^c)\,\dx \lesssim \|u\|_{\vd}^2.
\end{align*}
The last inequality follows from the fact that $x \mapsto \nu(x,D^c)$ is bounded on $U$. 
Thus the norm $\|\cdot \|_{\mathcal{V}^D}$ dominates the norm in $L^2((\textbf{1}_D(y) + \nu(y,U)\textbf{1}_{D^c}(y))\, \dy)$. Furthermore, $y \mapsto \nu(y,U)$ is locally bounded from below (by a positive constant) on $D^c$. Therefore every Cauchy sequence in $\mathcal{V}^D$ has a subsequence that converges to some measurable $u$ \textit{a.e.} in $\mR^d$. By Fatou's lemma, $\|u\|_{\vd} < \infty$ and also $\|u_n - u\|_{\vd} \to 0$ as $n\to\infty$, cf. \cite[Lemma 2.3]{MR3318251}.
\end{proof}
\begin{lemma}\label{lem:l1}
	If \eqref{es:nuSc1} holds, then $\mathcal{V}^D\subset L^2(1\wedge \nu)\subset  L^1(1\wedge \nu)$.
\end{lemma}
\begin{proof}
For $D_1 \subseteq D_2 \subseteq \mR^d$ we have $\mathcal{V}^{D_2} \subseteq \mathcal{V}^{D_1}$, so we may assume that $D$ is bounded. Fix nonempty open $U\subset\subset D$ and $x_0\in U$. By \eqref{es:nuSc1} we have $\nu(y,U) \approx \nu(y,x_0)$ for $y\in D^c$. The result follows from Lemma  \ref{lem:l2d}, the proof
of   Lemma~\ref{lem:vhil} and the finiteness of the measure $1\wedge\nu(x)\,\dx$.
\end{proof}

\subsection{Stochastic process}\label{sec:S}
We define
$$
\psi(\xi)=\int_\Rd (1-\cos \xi\cdot x)\,\nu(|x|)\,\dx, \quad \xi \in \Rd,
$$
the L\'evy--Khinchine exponent for $\nu$.
Since $\nu(\mR^d) = \infty$, by K.-i. Sato \cite[Theorem 27.7]{MR1739520} and Kulczycki, Ryznar \cite[Lemma 2.5]{MR3413864}, for every $t>0$ there is a continuous function $p_t(x)\geq 0$   on $\Rd\setminus\{0\}$ such that
$$
\int_\Rd \me^{i\xi\cdot x} p_t(x)\,\dx=\me^{-t\psi(\xi)}, \quad \,\xi\in \Rd.
$$
Measures $\mu_t({\rm d}x)=p_t(x){\rm d}x$ form a weakly continuous convolution semigroup on $\mathbb R^d.$
Accordingly,
$$P_t f(x)=\int_\Rd f(y)p_t(y-x)\,\dy,\quad t>0,\ x\in \Rd,$$
is a strongly continuous semigroup of operators on
$C_0(\Rd)$ and its generator is a Fourier multiplier with the symbol $-\psi(\xi)$, cf. \cite[Chapter 6]{MR3587832}.
On Borel sets in the space $\Omega$ of c\`adl\`ag functions $\omega\colon [0,\infty)\mapsto \Rd$ we consider the probability measures $\mpr^x$, $x\in \Rd$,  constructed by the Kolmogorov's extension theorem from the finite-dimensional distributions
$$
P^x_{t_1,t_2,\ldots, t_n}(A_1,\ldots,A_n)=\int_{A_1}\cdots \int_{A_n} \prod_{i=1}^n p_{t_i-t_{i-1}}(x_i-x_{i-1}) \, \mathrm{d}x_n \cdots \mathrm{d}x_1,
$$
where $0=t_0\le t_1\le \ldots \le t_n$, $x_0=x$, $A_1,\ldots A_n \subset \Rd$ and $n=1,2,\ldots$ \cite[p. 54]{MR1739520}.
The {\it process} $X_t(\omega):=\omega(t)$ on $\Omega$ is a convenient tool to handle $\mpr^x$.
In particular,  $\mpr^x (X_t\in A)=\int_ A p_t(y-x)\,\dy$ \linebreak and
$\mpr^x(X_{t_1}\in A_1,\ldots,X_{t_n}\in A_n)=P^x_{t_1,t_2,\ldots, t_n}(A_1,\ldots,A_n)$.
We call $\mpr^x$ the distribution of the process starting from $x\in \Rd$ and we let $\mE^x$ be the corresponding integration.
By the construction, $X= \{X_t\}_{t\ge 0}$ is a symmetric L\'evy process in $\Rd$
with $(0,\nu,0)$ as the L\'evy triplet \cite[Section 11]{MR1739520}.
We let, as usual, $X_{t-} = \lim_{s\to t^-} X_s$ for $t>0$ and $X_{0^-}=X_0$.
We introduce the time of the first exit of $X$ from $D$,
$$\tau_D=\tau_D(X)=\inf\{t\ge 0: \, X_t\notin D\}.$$
We also consider the usual shift operators $F(X)\circ \theta_s=F(\{X_{t+s}, t\ge 0\})$, $s\ge 0$, cf. Blumenthal and Getoor \cite{MR0264757} or Chung, Zhao \cite{MR1329992}.
The Dirichlet heat kernel $p_t^D(x,y),$
is determined by the identity
$$
\int_\Rd f(y)p_t^D(x,y)\,\dy=\mE^x [f(X_t);\tau_D>t],\quad t>0,\ x\in\mR^d,
$$
where $f\colon\Rd\to [0,\infty]$, cf.  Bogdan, Grzywny, Ryznar \cite[Section 1.3]{MR3249349} and \cite{MR1329992}.
The Green function of $D$ is
$$
G_D(x,y)=\int_0^\infty p_t^D(x,y)\,\dt,\quad x,y\in\mR^d,
$$
and for functions $f\ge 0$ we have
$$
\int_\Rd G_D(x,y)f(y)\,\dy=\int_0^\infty \int_\Rd f(y)p_t^D(x,y)\,\dy=\mE^x \int_0^{\tau_D} f(X_t)\,\dt,\quad x\in\mR^d.
$$
Accordingly, $G_D(x,y)$ is interpreted as the occupation time density of $X_t$ prior to the first exit from $D$.
The following Ikeda--Watanabe formula defines the joint distribution of $(\tau_D,X_{\tau_D-},X_{\tau_D})$ restricted to the event $\{\tau_D<\infty,X_{\tau_D-}\neq X_{\tau_D}\}$: if
$x\in D$, then
\begin{align}\label{eq:IW}
\mpr^x[\tau_D\in I,\; A\ni X_{\tau_D-}\neq X_{\tau_D}\in B]=
\int\limits_I \int\limits_{B} \int\limits_A p_u^D(x,y)\nu(y,z)\,\dz\dy\mathrm{d}u,
\end{align}
see, e.g., Bogdan, Rosi\'nski, Serafin, Wojciechowski \cite[Section~4.2]{MR3737628}.
Thus, if ${\rm dist} (B,D)>0$, then
\begin{align}\label{eq:IWG}
\mpr^x[X_{\tau_D}\in B]=
\int_B P_D(x,z)\,\dz \leq 1,\quad x\in D,
\end{align}
where $P_D$ is the Poisson kernel \eqref{eq:Pk}.
The $L$-harmonic measure of $D$ for $x\in \Rd$, denoted $\omega_D^x,$
is the distribution of the random variable $X_{\tau_D}$ under $\mathbb P^x$.
Thus,
\begin{equation}\label{eq:hm}
\omega_D^x({\rm d}z) =\mpr^x[X_{\tau_D}\in {\rm d}z].
\end{equation}
From \eqref{eq:IW} we see that $P_D(x,z){\rm d}z$ is the part of $\omega_D^x({\rm d}z)$  which results from the discontinuous exit (by a jump) from $D$. In particular, for sufficiently regular $D$, by Lemma \ref{l:nuSc} below we have
\begin{equation}\label{eq:ikedawatanabe}
\omega_D^x(\dz) = P_D(x,z)\, \dz = \int_D G_D(x,y)\nu(y,z)\, \dy \dz.
\end{equation}
The reader may easily obtain other marginal distributions of $(\tau_D,X_{\tau_D-}, X_{\tau_D})$. For instance,
\begin{equation}\label{eq:skad1}
 \mpr^x(X_{{\tau_ D}^-}\in  D)=\int_ D  G_ D(x,y)\kappa_ D(y)\,\dy,\quad x\in  D,
\end{equation}
where
$$\kappa_ D(x)=\int_{ D^c} \nu(x,z)\,\dz, \quad x\in D.$$
Formula \eqref{eq:IW} allows to interpret $p^D_u(x,y)$ as the density function of the distribution
of
$X_u$ for the process killed (disappearing) at time $\tau_D$.
We interpret
$\kappa_D(x)$
as the intensity of killing (escape outside $D$).
For $U\subset D$ we have inequalities $p^U\le p^D$, $G_U\le G_D$. Also,
$P_U(x,z)\le P_D(x,z)$ for $x\in U$, $z\in D^c$, and $\gamma_U(z,w)\leq \gamma_D(z,w)$ for $z,w\in D^c$.
These inequalities are referred to as \textit{domain monotonicity}.
\section{Harmonic functions}\label{sec:harm}
Let $L$ be the operator given by \eqref{eq:L-def} and let $(X_t, \mathbb P^x)_{t\geq 0,\, x\in\mathbb R^d}$ be the symmetric pure-jump L\'{e}vy process in $\Rd$ constructed above.
As before, $D$ denotes a fixed nonempty open subset of $\Rd$.

\begin{definition}\label{def:harm} {\rm (i)}
We say that $u\colon \mathbb R^d\to\mathbb R$ is $L$-harmonic (or harmonic, if $L$ is understood) in $D$ if it has the mean value property, that is for all (open) $U\subset\subset D$
and $x\in U$,
$$u(x) = \mathbb{E}^x u(X_{\tau_U}).$$
{\rm (ii)} We say that $u$ is regular $L$-harmonic (or regular harmonic) in $D$ if
 $u(x) = \mathbb{E}^x u(X_{\tau_D})$ for $x\in D$.\\
In {\rm (i)} and {\rm (ii)} we assume that the integrals are absolutely convergent.

\end{definition}

\begin{lemma}\label{lem:harmonic}
If $u$ is regular $L$-harmonic in $D$, then it is  $L$-harmonic in $D$.
\end{lemma}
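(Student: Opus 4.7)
The plan is to reduce the mean value property on $U\subset\subset D$ to the regular harmonicity on $D$ by invoking the strong Markov property of $X$ at the stopping time $\tau_U$. Since $\overline{U}\subset D$, we have $\tau_U\le \tau_D$ $\mpr^x$-a.s., and at $\tau_U$ the process either exits through $\partial U$ into $D$, or leaps directly out of $D$ (in which case already $X_{\tau_U}=X_{\tau_D}$).

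The key observation, which I would isolate first, is that for \emph{every} $y\in\mR^d$ one has
\[
u(y)=\mE^y u(X_{\tau_D}).
\]
For $y\in D$ this is the hypothesis of regular $L$-harmonicity. For $y\in D^c$, the set $D^c$ is closed, so right-continuity of paths together with Blumenthal's zero-one law yields $\tau_D=0$ $\mpr^y$-a.s., whence $X_{\tau_D}=y$ and the identity is tautological. Thus pointwise on $\Omega$,
\[
\mE^{X_{\tau_U}}[u(X_{\tau_D})]=u(X_{\tau_U}).
\]

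Next, I would verify absolute convergence of $\mE^x u(X_{\tau_U})$. By the strong Markov property applied to the nonnegative random variable $|u(X_{\tau_D})|$ (no integrability is needed for nonnegative functions),
\[
\mE^x |u(X_{\tau_U})|
=\mE^x\bigl|\mE^{X_{\tau_U}}[u(X_{\tau_D})]\bigr|
\le\mE^x\mE^{X_{\tau_U}}|u(X_{\tau_D})|
=\mE^x|u(X_{\tau_D})|<\infty,
\]
the last finiteness coming from the absolute convergence built into the definition of regular $L$-harmonicity. With integrability secured, a second application of the strong Markov property—now to the signed integrand—gives
\[
u(x)=\mE^x u(X_{\tau_D})
=\mE^x\mE^{X_{\tau_U}}[u(X_{\tau_D})]
=\mE^x u(X_{\tau_U}),\qquad x\in U,
\]
which is exactly the mean value property required by Definition~\ref{def:harm}. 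Since $U\subset\subset D$ was arbitrary, $u$ is $L$-harmonic in $D$.

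The main (mild) obstacle is bookkeeping the case $X_{\tau_U}\in D^c$: the jump-out event has positive probability for this purely discontinuous process, so one cannot simply invoke regular harmonicity at $X_{\tau_U}$ without first noting that the identity $u(y)=\mE^y u(X_{\tau_D})$ extends trivially to $y\in D^c$. Once this point is clarified, the strong Markov property at $\tau_U$ does all the work, and the absolute-convergence check above is standard.
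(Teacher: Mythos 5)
There is a genuine gap: your key observation that $u(y)=\mE^y u(X_{\tau_D})$ for \emph{every} $y\in D^c$ is false in general. Right-continuity gives $\tau_D=0$ $\mpr^y$-a.s.\ only for $y$ in the interior of $D^c$; for $y\in\partial D\subset D^c$ Blumenthal's zero-one law only yields the dichotomy $\mpr^y(\tau_D=0)\in\{0,1\}$, and the lemma assumes no regularity of $D$ whatsoever. For an irregular boundary point the process started there stays in $D$ for a positive time a.s., and then $\mE^y u(X_{\tau_D})$ is an average of the arbitrary exterior data over the harmonic measure, not $u(y)$ (example: $D=B(0,1)\setminus\{0\}$ with the isotropic $\alpha$-stable process in $d\ge 2$, where $\{0\}$ is polar, so $\tau_D>0$ $\mpr^0$-a.s.). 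The same defect infects your unconditional strong Markov step $\mE^x u(X_{\tau_D})=\mE^x\mE^{X_{\tau_U}}[u(X_{\tau_D})]$: this needs $\tau_D=\tau_U+\tau_D\circ\theta_{\tau_U}$ $\mpr^x$-a.s., which can fail on the event $\{\tau_U=\tau_D,\ X_{\tau_D}\in\partial D\ \text{irregular for }D^c\}$, since after time $\tau_D$ the path may re-enter $D$. Your integrability check uses the same identity twice, so it inherits the gap as well.

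The paper avoids this entirely by splitting at $\tau_U$ into the events $\{\tau_U<\tau_D\}$ and $\{\tau_U=\tau_D\}$: on the first, $X_{\tau_U}\in D$ and the strong Markov property plus regular harmonicity at interior points give $\mE^{X_{\tau_U}}[u(X_{\tau_D})]=u(X_{\tau_U})$; on the second, $u(X_{\tau_D})=u(X_{\tau_U})$ holds pathwise, so no statement about exterior points is ever needed (finiteness is handled by first taking $u\ge 0$, invoking the Harnack inequality of \cite{arXiv161110304GK}, and then splitting into positive and negative parts). You could try to rescue your route by showing the bad event is null — since $\overline U\subset D$, an exit of $U$ landing in $D^c$ must be by a jump, so by the Ikeda--Watanabe formula the landing distribution on $D^c$ is absolutely continuous, and one would then need the set of points of $D^c$ irregular for $D^c$ to be Lebesgue-null — but none of this is in your argument, and the event-splitting makes it unnecessary.
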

\begin{proof}
Let $g\colon D^c\to [0,\infty]$.
Let $u(x)=g(x)$ for $x\in D^c$ and $u(x)=\mathbb E^x g(X_{\tau_D})$ for $x\in D$.
Thus, $u$ is regular harmonic on $D$.
Let $U$ be an arbitrary open set such that $U\subset D$. Of course, $\tau_U\leq \tau_D$. 
Note that $\tau_D=\tau_U+\tau_D\circ \theta_{\tau_U}$ and $u(X_{\tau_D})=u(X_{\tau_D})\circ \theta{\tau_U}$. Consider the usual $\sigma$-algebra ${\mathcal F}_{\tau_U}$ of up-to-$\tau_U$ events   \cite{MR0264757, MR1329992}.
Let $x\in U$. 
By the tower rule of conditional expectations, the strong Markov property of $X$
and the regular harmonicity of $u$, 
\begin{eqnarray*}
u(x)&=& \mathbb E^x u(X_{\tau_D}) =
\mathbb E^x [ \mathbb E^x [ u(X_{\tau_D})\circ \theta_{\tau_U}|\mathcal F_{\tau_U}]]\\
&=&\mathbb E^x [ \mathbb E^{X_{\tau_U}} [u(X_{\tau_D})]]
=\mathbb E^x[u(X_{\tau_U})].
\end{eqnarray*}
In particular $u$ is harmonic on $D$.
The case of general (signed) $u$ follows from the above by taking $g$ equal to
the positive and negative parts of $u$ on $D^c$.
\end{proof}
\begin{lemma}\label{lem:preharmonic}
If $u(x)=\mathbb E^x [u(X_{\tau_D}); X_{\tau_{D-}}\neq X_{\tau_D}]$ for all $x\in D$, then $u$ is  harmonic in $D$.
\end{lemma}
The proof is similar to that of Lemma~\ref{lem:harmonic}, so we skip it.
\begin{remark}\label{rem:martingale}
{\rm The proof of Lemma~\ref{lem:harmonic} in fact shows that
$\{u(X_{\tau_U}), U\subset D\}$ is a martingale ordered by inclusion of open subsets of $D$; the martingale is closed by $u(X_{\tau_D})$ if $u$ is regular harmonic. 
}
\end{remark}

\begin{lemma}\label{lem:harml1}
	If $u$ is
	$L$-harmonic in $D,$ then $u\in L^1_{loc} (\mR^d)$.
\end{lemma}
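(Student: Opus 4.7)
The plan is to fix one ball $B\subset\subset D$ and combine two ingredients: a Poisson-integral estimate that controls $|u|$ on bounded subsets of $\overline{B}^c$, and the Harnack inequality, which controls $|u|$ on $\overline{B}$.

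First I would fix $x_0\in D$ and choose $r>0$ small enough that $B=B(x_0,r)$ and $B'=B(x_0,2r)$ both satisfy $\overline{B'}\subset D$. Applying the mean-value property at $B$ and invoking the absolute convergence in Definition \ref{def:harm}, I obtain $\mE^{x_0}|u(X_{\tau_B})|<\infty$. The Ikeda--Watanabe formula \eqref{eq:IWG} then yields
$$
\int_{\overline{B}^c}|u(y)|P_B(x_0,y)\dy\le \mE^{x_0}|u(X_{\tau_B})|<\infty.
$$
For any compact $K\subset\mR^d$, writing $M_K:=\sup_{w\in K}|w-x_0|$, the strict positivity and monotonicity of $\nu$ give a uniform lower bound
$$
P_B(x_0,y)=\int_B G_B(x_0,z)\nu(z,y)\dz\ge \nu(r+M_K)\,\mE^{x_0}\tau_B=:c_K>0,\qquad y\in K\cap\overline{B}^c.
$$
Hence $\int_{K\cap\overline{B}^c}|u(y)|\dy\le c_K^{-1}\int_{\overline{B}^c}|u(y)|P_B(x_0,y)\dy<\infty$.

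To bound $|u|$ on the compact set $\overline{B}\subset D$, I would set $v(x):=\mE^{x}|u(X_{\tau_{B'}})|$. A direct application of the strong Markov property, exactly as in the proof of Lemma \ref{lem:harmonic}, shows that $v$ satisfies the mean-value property on $B'$, so $v$ is nonnegative and $L$-harmonic in $B'$, with $v(x_0)<\infty$ from the absolute convergence. The Harnack inequality of Grzywny--Kwa\'snicki \cite{arXiv161110304GK} then produces a constant $C<\infty$ such that $v(x)\le C v(x_0)$ for $x\in\overline{B}\subset\subset B'$. Since $|u(x)|\le v(x)$ for $x\in B'$ (again from the mean-value property at $B'$), $|u|$ is bounded by $C v(x_0)$ on $\overline{B}$, and therefore $\int_{K\cap\overline{B}}|u(y)|\dy<\infty$.

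Summing the two estimates (the sphere $\partial B$ is Lebesgue-null) gives $\int_K|u(y)|\dy<\infty$ for every compact $K$, i.e., $u\in L^1_{loc}(\mR^d)$. The main obstacle is the interior step: pointwise finiteness of $u$ on $D$ does not by itself imply local integrability there, and the passage to local boundedness relies on the Harnack inequality for $L$-harmonic functions from \cite{arXiv161110304GK}. The exterior part is essentially automatic once the lower bound on $P_B(x_0,\cdot)$ on bounded sets is observed.
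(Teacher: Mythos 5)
Your argument is correct, and its exterior half coincides with the paper's proof: the absolute convergence built into Definition~\ref{def:harm}, together with a positive lower bound for the Poisson kernel of a small ball on bounded sets, gives integrability of $u$ on $K\cap\overline{B}^c$ (the paper phrases the bound as $P_{B_\eps}(x,\cdot)\geq C$ on $\{\eps<|z-x|<R\}$, which is your $c_K$). Where you genuinely diverge is the leftover ball $\overline{B}$: the paper needs no further input, since it lets the center $x$ and the radii $\eps,R$ vary, so annuli with nearby centers cover the small hole around any given point and hence every compact set; you instead apply the Harnack inequality of Grzywny--Kwa\'snicki to $v=\mE^{\cdot}|u(X_{\tau_{B'}})|$, which --- after checking, as you do via the strong Markov argument of Lemma~\ref{lem:harmonic}, that $v$ is a nonnegative $L$-harmonic majorant of $|u|$ in $B'$, finite by the absolute convergence in Definition~\ref{def:harm} --- yields boundedness of $u$ on $\overline{B}$. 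Both routes work under the standing assumptions {\bf A1}, {\bf A2} of Section~\ref{sec:harm}, and your invocation of \cite{arXiv161110304GK} is consistent with how the paper itself uses that reference (e.g.\ in Lemma~\ref{lem:harmonic}); the trade-off is that you obtain the stronger interior conclusion of local boundedness (later superseded by Theorem~\ref{lem:harmc2}) at the cost of a deep external theorem, where the paper's covering observation is completely elementary. One cosmetic point: \eqref{eq:IWG} is stated for target sets at positive distance from the domain, so for the inequality $\int_{\overline{B}^c}|u(y)|P_B(x_0,y)\,\dy\leq\mE^{x_0}|u(X_{\tau_B})|$ it is cleaner to cite the Ikeda--Watanabe formula \eqref{eq:IW} itself, which identifies $P_B(x_0,z)\,\dz$ as the sub-probability law of $X_{\tau_B}$ on the event of exiting $B$ by a jump.
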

	\begin{proof} 
Let $0<\epsilon < \mbox{dist}\,(x,D^c)$. Then,
	$\int_{B_{\eps}^c(x)} |u(z)| P_{B_{\eps}(x)}(x,z)\, \dz<\infty$. 
By Ikeda--Watanabe, $P_{B_{\eps}(0)}(0,z)>0$ on $B^c_{\eps}(0)$.
By \cite[Corollary 2.4]{MR3729529}, $z\mapsto P_{B_{\eps}(0)}(0,z)$ is radially nonincreasing on $B^c_{\eps}(0)$, so
$P_{B_{\eps}}(x,z)$ is locally bounded away from zero on $B_{\eps}^c(x)$.
The result easily follows.
	\end{proof}

Here is more on the $L^2$-integrability
implied by the $L^2$-integrability of increments, cf. Lemma~\ref{lem:l2d}.

\begin{lemma}\label{lem:integrability}
Assume that {\rm \textbf{A2}} holds. If $g\in\xd$ and $x\in D$, then $\int_{D^c} g(z)^2P_D(x,z)\,\dz<\infty$.
\end{lemma}
\begin{proof} By the definition of $\gamma_D$,
\begin{eqnarray}
\HD(g,g) & =& \tfrac{1}{2}\int_{D^c}\int_{D^c}\int_D (g(z)-g(w))^2 \nu(w,x)P_D(x,z)\,\dx\dz\dw<\infty.\label{eq:*}
\end{eqnarray}
Since $\nu>0$,
for almost all $(x,w)\in D\times D^c$ we obtain
\begin{eqnarray}\label{eq:finiteness}
\int_{D^c} g(z)^2 P_D(x,z)\,\dz&\leq&2\int_{D^c} (g(w)-g(z))^2 P_D(x,z)\,\dz+2g(w)^2<\infty.
\end{eqnarray}
Thus $\int_{D^c} g(z)^2P_D(x,z)\,\dz<\infty$ for almost every $x\in D$. {\rm \textbf{A2}} lets us use the boundary Harnack principle given by Grzywny and Kwa\'snicki in \cite[(1.12)]{MR3729529} to get this assertion for all $x\in D.$ 
Indeed, let $n=1,2,\ldots$, $u_n(x)=g_n(x)=g^2(x)\wedge n$ for $x\in D^c$ and 
$u_n(x)=\mathbb E^x[g_n(X_{\tau_D}); X_{\tau_D-}\neq X_{\tau_D}]$ otherwise.
Similarly, we let $u(x)=g^2(x)$ if $x\in D^c$, elsewhere we let 
$u(x)=\mathbb E^x[g(X_{\tau_D})^2; X_{\tau_D-}\neq X_{\tau_D}]$.
Clearly, $u=\lim u_n$. These functions are (finite and) regular harmonic on every  $U\subset \subset D$. By
Lemma~\ref{lem:preharmonic} and \cite[(1.12)]{MR3729529} the functions $u_n$ are uniformly in $n$ locally bounded on $D$, because $u_n\le u$. It follows that $u$ is locally bounded on $D$, in particular it is finite on $D$. 
\end{proof}

We fix an arbitrary (reference) point $x_0\in D.$ For $g\in\xd,$ we let $$|g|_{D^c}^2=\int_{D^c} g(z)^2P_D(x_0,z)\,\dz$$ (we  omit $x_0$ from the notation).
The expression is finite by Lemma \ref{lem:integrability}.
We define a norm on $\xd$:
\begin{equation}\label{eq:norm1}
\|g\|_{\xd}=\sqrt{|g|_{D^c}^2+ \HD(g,g)}.
\end{equation}
Arguing as in the last part of the proof of Lemma \ref{lem:vhil} we see that $\xd$ is complete with this norm.

\noindent The next result is due to Grzywny and Kwa\'snicki \cite{MR3729529}.
\begin{lemma}\label{th:GKrep}
Let $0\leq q < r<\infty$. There is a radial kernel $\olp_{q,r}(z)$, a constant $C = C(d,\nu,q,r) > 0$ and a probability measure $\mu_{q,r}$ on the interval $[q,r]$, such that
\begin{equation}\label{eq:Pqr}
	\olp_{q,r}(z) = \int_{[q,r]} P_{B_s}(z)\,\mu_{q,r}(\ds)=\int_{[q,r]}\int_{B_s}\nu(y,z)G_{B_s}(0,y)\,\dy\,\mu_{q,r}(\ds),\quad |z| > r,
	\end{equation}
$\olp_{q,r} = 0$ in $B_q$, $0\leq \olp_{q,r}\leq C$ in $\mR^d$, $\olp_{q,r} = C$ in $B_r\setminus B_q$ and $\olp_{q,r}$ decreases radially on $B_r^c$. Furthermore, $\olp_{q,r}(z) \leq P_{B_r}(z)$, for $|z| > r$, and if $f$ is $L$-harmonic in $B_r$, then
$$f(0) = \int_{\mR^d\setminus B_q} f(z)\olp_{q,r}(z)\, \dz.$$
\end{lemma}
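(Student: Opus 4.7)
The plan is to construct $\olp_{q,r}$ as a suitable average of the classical Poisson kernels $P_{B_s}$, $s\in[q,r]$, against a probability measure $\mu_{q,r}$ on $[q,r]$ to be chosen. For any such $\mu$, setting
\[
\olp^{\mu}(z) := \int_{[q,r]} P_{B_s}(z)\,\mu(\ds)
\]
(with the convention $P_{B_s}(z)=0$ for $|z|\le s$) already yields almost every required property: isotropy of $X$ forces radiality of $\olp^{\mu}$; unimodality makes each $P_{B_s}$ radially nonincreasing on $B_s^c$, hence $\olp^{\mu}$ is radially nonincreasing on $B_r^c$; and $\olp^{\mu}\equiv 0$ on $B_q$ by the convention. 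The second form in \eqref{eq:Pqr} follows from the Ikeda-Watanabe identity $P_{B_s}(z)=\int_{B_s}\nu(|z-y|)G_{B_s}(0,\mathrm{d}y)$ and Fubini. The mean value identity is automatic: for $f$ that is $L$-harmonic in $B_r$ and each $s\in[q,r]$, we have $f(0)=\int_{B_s^c} f(z)P_{B_s}(z)\dz$, and integrating against $\mu$ gives $f(0)=\int_{\mR^d \setminus B_q}f(z)\olp^{\mu}(z)\dz$.

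The heart of the argument, and the expected main obstacle, is to produce $\mu_{q,r}$ making $\olp^{\mu}$ equal to a single constant $C>0$ on the annulus $B_r\setminus B_q$. Writing $p_s(t):=P_{B_s}(z)$ for $|z|=t>s$ (radial by isotropy), the requirement becomes the first-kind Volterra equation
\[
\int_{[q,t)}p_s(t)\,\mu(\ds)=C,\qquad t\in(q,r].
\]
My first step would be to gather structural properties of $p_s(t)$ from \eqref{eq:IW} and unimodality: $p_s(t)$ is jointly continuous on $\{0<s<t\}$, strictly positive, increasing in $s$ for fixed $t$, with $p_s(t)\to\infty$ as $s\to t^-$ and $p_s(t)\to 0$ as $s\to 0^+$. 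Given these, one route is a Neumann-type resolvent construction on a suitable weighted $L^\infty$-space; an alternative is an approximation argument in which measures $\mu^{\eps}$ supported in $[q,q+\eps]$ are chosen so that $\int p_s(t)\mu^{\eps}(\ds)$ is constant on $(q+\eps,r]$, followed by a weak limit as $\eps\downarrow 0$. Normalization $\mu_{q,r}([q,r])=1$ then fixes $C$. The genuine difficulty lies in positivity and finiteness of the limit: the singularity of $p_s(t)$ as $s\to t^-$ precludes $\mu_{q,r}$ from having an $L^1$-density, yet atoms are equally incompatible with the equation, so $\mu_{q,r}$ must be atomless yet singular, and controlling its mass near the endpoints $s=q$ and $s=r$ simultaneously is delicate.

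With $\mu_{q,r}$ and $C$ constructed, the remaining bounds are routine. Domain monotonicity $G_{B_s}(0,y)\le G_{B_r}(0,y)$ for $s\le r$, substituted in the Ikeda-Watanabe formula, yields $P_{B_s}(z)\le P_{B_r}(z)$ for $s\le r<|z|$; integrating against $\mu_{q,r}$ gives $\olp_{q,r}(z)\le P_{B_r}(z)$ for $|z|>r$. The global bound $0\le \olp_{q,r}\le C$ follows at once: the function vanishes on $B_q$, equals $C$ on $B_r\setminus B_q$, and is radially nonincreasing on $B_r^c$ with limiting value at $|z|=r^+$ equal to $C$, the latter by continuity of $\olp^{\mu_{q,r}}$ at $|z|=r$, which in turn rests on $\mu_{q,r}(\{r\})=0$ being forced by the Volterra equation at $t=r$.
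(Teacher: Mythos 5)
The paper does not actually prove this lemma: it is quoted from Grzywny and Kwa\'snicki \cite{arXiv161110304GK}, so your argument has to stand on its own, and it does not. The parts you carry out are indeed routine once a suitable $\mu_{q,r}$ exists: radiality, radial monotonicity on $B_r^c$ from unimodality, the second expression in \eqref{eq:Pqr} via the Ikeda--Watanabe formula and Fubini, the mean value identity obtained by averaging $f(0)=\int_{B_s^c} f(z)P_{B_s}(z)\dz$ over $s$ (which, note, already uses that the exit distribution of a ball puts no mass on the sphere), and $\olp_{q,r}\leq P_{B_r}$ on $B_r^c$ from domain monotonicity together with $\mu_{q,r}([q,r])=1$. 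But the entire content of the lemma beyond these observations is the existence of a probability measure $\mu_{q,r}$ and a constant $C$ making the mixture \emph{exactly} constant on $B_r\setminus B_q$. This you do not prove: you reformulate it as a first-kind Volterra equation, name two possible strategies (a ``Neumann-type resolvent construction'' and an ``approximation plus weak limit'') without executing either, and you yourself flag the positivity and finiteness of the limit as the ``genuine difficulty''. An acknowledged but unresolved difficulty is a gap, and here it is the main one; the proposal essentially reduces the lemma to an equivalent open step rather than solving it.

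Moreover, several structural inputs you invoke to set up and analyze that equation are unjustified at this level of generality: joint continuity and strict positivity of $s\mapsto P_{B_s}(z)$, and especially the blow-up $P_{B_s}(z)\to\infty$ as $s\uparrow|z|$, which is not a general feature of isotropic unimodal L\'evy processes. Your structural claims about $\mu_{q,r}$ are also speculative: the assertion that it ``must be atomless yet singular'' is unproven, and the claim that $\mu_{q,r}(\{r\})=0$ is ``forced by the Volterra equation at $t=r$'' is incorrect as stated, since that equation only involves $\mu_{q,r}$ restricted to $[q,r)$ and says nothing about an atom at $r$; what would rule out such an atom is the required bound $\olp_{q,r}\leq C$ on $B_r^c$ when $P_{B_r}$ explodes at the sphere, and when it does not explode the argument must be different. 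Consequently your derivation of $0\leq\olp_{q,r}\leq C$ on $B_r^c$ from ``continuity at $|z|=r$'' is also unsubstantiated. To turn this into a proof you would need to reconstruct the actual construction of \cite{arXiv161110304GK}, which is substantive work not contained in the proposal.
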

\begin{corollary}\label{rem:1}
If $f$ is
$L$-harmonic in $B_{2r},$ then  $f = f\ast \olp_{0,r}$ in $B_r.$
\end{corollary}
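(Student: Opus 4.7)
The plan is to reduce the corollary to Lemma~\ref{th:GKrep} by a translation, using translation invariance of $L$ together with the radial symmetry of $\olp_{0,r}$.

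First I would fix an arbitrary $x\in B_r$ and set $g(y)=f(x+y)$ for $y\in\mR^d$. Since $|x|<r$, the triangle inequality gives $B_r\subset B_{2r}-x$, so $g$ is the restriction of $f$ composed with a translation to a ball on which $f$ is $L$-harmonic. Because the L\'evy measure $\nu$ is translation invariant (and hence so is $L$ and the underlying process $X$), the function $g$ is $L$-harmonic in $B_r$; this follows formally from $\mE^{y}g(X_{\tau_U})=\mE^{x+y}f(X_{\tau_{U+x}})$ for $U\subset\subset B_r$, which is just the strong Markov / translation identity for L\'evy processes.

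Next I would apply Lemma~\ref{th:GKrep} to $g$ with the choice $q=0$. This yields
\begin{equation*}
g(0)=\int_{\mR^d\setminus\{0\}} g(z)\,\olp_{0,r}(z)\,\dz=\int_{\mR^d} f(x+z)\,\olp_{0,r}(z)\,\dz,
\end{equation*}
where the missing point at the origin is irrelevant as a Lebesgue null set. Substituting $y=x+z$ and using that $\olp_{0,r}$ is radial, so $\olp_{0,r}(y-x)=\olp_{0,r}(x-y)$, I obtain
\begin{equation*}
f(x)=g(0)=\int_{\mR^d} f(y)\,\olp_{0,r}(x-y)\,\dy=(f\ast\olp_{0,r})(x),
\end{equation*}
which is the claimed identity on $B_r$.

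There is essentially no obstacle; the only point requiring a word of care is the absolute convergence of the defining integral, which is part of the standing interpretation of $L$-harmonicity (see Definition~\ref{def:harm}) and also justified by $0\le\olp_{0,r}\le C$ and by the representation \eqref{eq:Pqr} together with the Ikeda--Watanabe formula, so that the integrability of $f$ against $\olp_{0,r}$ is inherited from the defining property $g(0)=\mE^0 g(X_{\tau_{B_r}})$ applied to $|g|$.
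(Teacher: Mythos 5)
Your argument is correct and is exactly the intended one: the paper states the corollary as an immediate consequence of Lemma~\ref{th:GKrep}, obtained by translating to an arbitrary $x\in B_r$ (so that $x+B_r\subset\subset B_{2r}$), applying the lemma with $q=0$, and using the radial symmetry of $\olp_{0,r}$ to rewrite the integral as a convolution. No issues.
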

We will use Lemma \ref{th:GKrep} to prove that Poisson extensions are twice continuously differentiable under the additional assumption {\rm \textbf{A1}}. In the proof we closely follow the arguments from Theorem 1.7 and Remark 1.8 b) in \cite{MR3729529} except that we do not assume the boundedness of $u$.

\begin{theorem}\label{lem:harmc2} Suppose that $\nu$ satisfies {\rm \textbf{A1}}
 and let $
D\subset \mathbb R^d$ be an open set.
	If $u\colon \mathbb R^d\to\mathbb R$  is $L$-harmonic in $D,$
	e.g., if $u(x) = \podc u(z) P_D(x,z)\,\dz$ for $x\in D$, then $u\in C^2(D)$.
\end{theorem}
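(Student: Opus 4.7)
The plan is to reduce $C^2$ regularity of $u$ to smoothness of a convolution kernel, and then differentiate twice under the integral sign using assumption \textbf{A1}. Fix $x_0 \in D$ and pick $R>0$ with $\overline{B(x_0, 2R)}\subset D$. For $x \in B(x_0, R)$ the ball $B(x, R)$ lies in $D$, so $u$ is $L$-harmonic in $B(x, R)$, and Corollary~\ref{rem:1} applied to the recentred ball of radius $R/2$ gives
\begin{equation*}
u(x) \;=\; \int_{\R^d} u(y)\,\overline{P}_{0, R/2}(x-y)\,\mathrm{d}y.
\end{equation*}
More flexibly, for $q\in(0, R/2)$, Lemma~\ref{th:GKrep} yields the same type of identity with the kernel $\overline{P}_{q, R/2}$, which vanishes on $B_q$ and is constant on $B_{R/2}\setminus B_q$.

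Under \textbf{A1}--\textbf{A2}, I would then show that $\overline{P}_{q, R/2}$ is $C^2$ off the spheres $\partial B_q\cup\partial B_{R/2}$, with derivatives dominated by the kernel itself up to a constant depending on $q$. Indeed, for $|z|>R/2$ the representation
\begin{equation*}
\overline{P}_{q, R/2}(z) \;=\; \int_{[q, R/2]}\int_{B_s}\nu(|z-y|)\,G_{B_s}(0,\mathrm{d}y)\,\mu_{q, R/2}(\mathrm{d}s)
\end{equation*}
has $|z-y|\ge|z|-R/2>0$, so the bounds $|\nu'(t)|, |\nu''(t)|\lesssim\nu(t)$ for $t$ bounded away from $0$ (an easy consequence of \textbf{A1}) justify differentiating in $z$ under the inner integrals. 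Combined with $u\in L^1_{loc}(\R^d)$ from the preceding lemma and the tail integrability of $\nu$ supplied by \textbf{A2}, dominated convergence yields $C^2$ smoothness in $x$ of the ``far'' contribution $\int_{|x-y|>R/2} u(y)\,\overline{P}_{q, R/2}(x-y)\,\mathrm{d}y$.

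The ``near'' part equals a constant multiple of $\int_{q<|x-y|<R/2} u(y)\,\mathrm{d}y$ and is \emph{a priori} only continuous, since the indicator of the annulus is not smooth. This is the chief obstacle. To overcome it I would iterate the representation: inserting the identity for the inner $u$ gives, on a smaller ball, $u = u*(\overline{P}_{q_1, R_1}*\overline{P}_{q_2, R_2})$, and the composed kernel, being the convolution of two $L^1$ kernels whose regular parts are $C^2$, is itself $C^2$ in the relevant region; the bounds \textbf{A1}--\textbf{A2} ensure that the dominating functions remain integrable against $u\in L^1_{loc}$ at each step. This is precisely the point at which the proof departs from \cite{arXiv161110304GK}, where an $L^\infty$ bound on $u$ bypasses these integrability issues; in our setting the $L^1_{loc}$ bound together with the structural hypotheses on $\nu$ plays the same role. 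The iteration then gives $u\in C^2(B(x_0, R))$, and since $x_0\in D$ was arbitrary, $u\in C^2(D)$.
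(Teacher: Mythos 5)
Your overall strategy (use the Grzywny--Kwa\'snicki representation of Lemma~\ref{th:GKrep}, differentiate the far part of the kernel under the integral via the \textbf{A1} domination $|\partial^\alpha\nu|\lesssim\nu$ away from the origin, and attack the constant ``near'' part by iterating the representation) is the same as the paper's, and you correctly identify the near part as the obstacle. But the step you use to remove that obstacle does not work as stated. First, the claim that $\overline{P}_{q_1,R_1}\ast\overline{P}_{q_2,R_2}$ is $C^2$ ``because it is the convolution of two $L^1$ kernels whose regular parts are $C^2$'' is false in general and fails here in low dimensions: the problematic contribution is precisely the convolution of the two constant near parts, which is of the type $\mathbf 1_{B}\ast\mathbf 1_{B}$. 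Its radial profile vanishes like $\varepsilon^{(d+1)/2}$ at the tangency sphere, so it is piecewise linear for $d=1$ and fails to be $C^2$ for $d\le 3$; one convolution does not produce two derivatives. This is exactly why the paper (following \cite[Theorem 1.7]{arXiv161110304GK}) must take a \emph{high} convolution power $\pi_r^{\ast k}$ and prove its $C^N$ smoothness by a Fourier-transform decay argument; that mechanism is the heart of the proof and is missing from your proposal, which merely asserts the conclusion.

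Second, the identity $u=u\ast\bigl(\overline{P}_{q_1,R_1}\ast\overline{P}_{q_2,R_2}\bigr)$ is not justified: to substitute the inner representation for $u(y)$ you need $u$ to be $L$-harmonic in a ball of radius $\approx 2R_2$ around $y$, whereas the outer kernel $\overline{P}_{q_1,R_1}$ is supported on all of $\mathbb{R}^d\setminus B_{q_1}$, so the outer convolution reaches points $y$ near and outside $\partial D$ where no such representation holds. The paper circumvents both issues simultaneously by splitting $\overline{P}_{0,r}=\pi_r+\Pi_r$ with a smooth cutoff: only the bounded, compactly supported piece $\pi_r$ is iterated (algebraically, $u=(\delta_0+\pi_r+\cdots+\pi_r^{\ast(k-1)})\ast\Pi_r\ast u+\pi_r^{\ast k}\ast u$), while the tail $\Pi_r\ast u$ is never re-expanded but differentiated directly under the integral using $|\partial^\alpha\Pi_r|\lesssim\Pi_r$ from \textbf{A1} together with $u\in L^1_{loc}$ and the finiteness of $\int|u|\,\overline{P}_{0,r}$. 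To repair your argument you would need both this cutoff (or some other device keeping the iterated kernel compactly supported within the region of harmonicity) and a genuine smoothing estimate for high convolution powers of the near part, e.g.\ the Fourier argument of \cite{arXiv161110304GK}.
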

	\begin{proof}
Note that {\rm \textbf{A1}} yields \eqref{es:nuSc1}. We are in a position to apply Lemma \ref{th:GKrep}. Let $x\in D,$
and let $r>0$ be such that $B_{2r}(x)\subset D.$  Since $\nu(z)$ is continuous,  we get from \eqref{eq:Pqr} that kernels $\overline{P}_{q,r}$ are continuous. By Corollary~\ref{rem:1}, $u$ is continuous in $B_r(x).$
	Next we fix a nonnegative smooth radial function $\kappa$ such that $0\leq\kappa\leq 1$, $\kappa \equiv 1$ in $B_{\frac 32 r}$ and $\kappa \equiv 0$ outside $B_{2r}$. As in \cite{MR3729529}, we denote $\pi_r(z)=\overline{P}_{0,r}(z)\kappa(z)$ and $\Pi_r(z)=\overline{P}_{0,r}(z)(1-\kappa(z)).$  Obviously, $u = \Pi_r \ast u + \pi_r\ast u$ in $B_r(x).$ In particular, both terms are well-defined. Iterating, we get
	\begin{align}
 u&= \big(\Pi_r + \pi_r\ast\Pi_r + \pi_r^{\ast 2}\ast\Pi_r + \ldots \pi_r^{\ast (k-1)}\ast\Pi_r + \pi_r^{\ast k}\big)\ast u\nonumber\\
&= (\delta_0 + \pi_r + \pi_r^{\ast 2} + \ldots + \pi_r^{\ast (k-1)})\ast\Pi_r \ast u + \pi_r^{\ast k}\ast u.\label{eq:poissexp}
	\end{align}
Using an argument based on the Fourier transform as in \cite[Proof of Theorem 1.7]{MR3729529}, we get that for every $N$ there is a sufficiently large $k$, such that the function $\pi_r^{\ast k}$ is $N$ times continuously differentiable. It is also compactly supported. Since $u\in L^1_{loc}(\mR^d)$, it follows that $ \pi_r^{\ast k}\ast u$ is $N$ times continuously differentiable in $D$. For our purposes below, it suffices to take $N=2$.

We will now handle the first summand in \eqref{eq:poissexp}. First, observe that if $\theta>r,$  $|z|>\theta >r,$ and $|\alpha|\in \{1, 2\}$, then
\begin{equation}\label{eq:por}
\left|\partial^\alpha \overline{P}_{0,r}(z)\right|\leq C_{\theta,r} \overline{P}_{0,r}(z).
\end{equation}
Indeed, by the definition of $\olp_{0,r}$ and the Ikeda--Watanabe formula we have
\begin{equation*}
\olp_{0,r}(z) = \int_{[0,r]}P_{B_s}(z)\, \mu_{0,r}(\ds) = \int_{[0,r]}\int_{B_s} \nu(y,z)G_{B_s}(0,y)\,\dy\,\mu_{0,r}(\ds)	
\end{equation*}
and further
\[\partial^\alpha \overline{P}_{0,r}(z)=\int_{[0,r]}\int_{B_s}
\partial^\alpha_z\nu(y,z)G_{B_s}(0,y)\,\dy\,\mu_{0,r}(\ds).\]
For  $z$ as above and $y\in B_s\subset B_r$ we have  $|z-y|\geq \theta-r$.
By {\rm \textbf{A1}}
\begin{eqnarray*}
|\partial^\alpha \overline{P}_{0,r}(z)|&\leq & C_{\theta,r} \int_{[0,r]}\int_{B_s}
\nu(y,z)G_{B_s}(0,y)\,\dy\,\mu_{0,r}(\ds)= C_{\theta,r}\overline {P}_{0,r}(z).
\end{eqnarray*}
Since $\mbox{supp}\, \Pi_r\,\subset B^c_{\frac{3}{2}r},$ and $\kappa$ is smooth, from the Leibniz rule and \eqref{eq:por} we see that for all $z\in\mathbb R^d,$
$
|\partial^\alpha \Pi_r(z)|\leq C_r|\Pi_r(z)|.
$ Therefore if $|\alpha|\le 2$, then
\[\int_{\mathbb R^d}|\partial^\alpha\Pi_r(x-z)u(z)|\,\dz<\infty,\]
which allows to differentiate under the integral sign and so
$\partial^\alpha \Pi_r\ast u(x)$ is well-defined. Continuity of the derivative follows from the continuity of $\partial^\alpha\nu$ and the dominated convergence.
\end{proof}
\begin{lemma}\label{lem:harmlu}
	Assume {\rm \textbf{A1}}. If $u$ is $L$-harmonic in $D$, then $Lu=0$ on $D$.
\end{lemma}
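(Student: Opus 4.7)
My plan is to use Dynkin's formula on a vanishingly small ball around an arbitrary point of $D$, after splitting $u$ into a smooth compactly supported part (to which the classical Dynkin formula applies directly) and a far-field part (handled via the Ikeda--Watanabe formula, since it vanishes near the ball).

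Fix $x_0\in D$. By Theorem~\ref{lem:harmc2}, $u\in C^2(D)$, so pick $r>0$ with $\overline{B(x_0,2r)}\subset D$. Then $Lu(x_0)$ is well-defined: near $x_0$ the principal value in \eqref{eq:L-def} is controlled by the Taylor bound $|u(x_0+z)+u(x_0-z)-2u(x_0)|\lesssim |z|^2$ together with $\int_{|z|<1}|z|^2\nu(z)\dz<\infty$, while absolute convergence far from $x_0$ follows from the assumption in Definition~\ref{def:harm} combined with the Ikeda--Watanabe formula \eqref{eq:IW} (applied to $U=B(x_0,r)$). The same estimates give continuity of $Lu$ at $x_0$ by dominated convergence.

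Pick $\eta\in C_c^\infty(D)$ with $\eta\equiv 1$ on $B(x_0,r)$, and split $u=v+w$ with $v=\eta u\in C_c^2(\Rd)$ and $w=(1-\eta)u$ vanishing on $B(x_0,r)$. For $z\in B(x_0,r)$ the integral $Lw(z)=\int_{\Rd} w(y)\nu(z,y)\dy$ is absolutely convergent (no PV needed), and $Lu(z)=Lv(z)+Lw(z)$. Fix $\eps\in(0,r)$ and set $B_\eps=B(x_0,\eps)$. Dynkin's formula applied to $v\in C_c^2(\Rd)$ gives
\begin{equation}\label{eq:planV}
\mE^{x_0}v(X_{\tau_{B_\eps}})-v(x_0)=\mE^{x_0}\int_0^{\tau_{B_\eps}}Lv(X_s)\ds,
\end{equation}
while $w(x_0)=0$, combined with \eqref{eq:IW} and Fubini (justified by the absolute convergence in Definition~\ref{def:harm}), yields
\begin{equation}\label{eq:planW}
\mE^{x_0}w(X_{\tau_{B_\eps}})=\int_{B_\eps}G_{B_\eps}(x_0,z)Lw(z)\dz=\mE^{x_0}\int_0^{\tau_{B_\eps}}Lw(X_s)\ds.
\end{equation}
Adding \eqref{eq:planV} and \eqref{eq:planW} and inserting the mean value identity $\mE^{x_0}u(X_{\tau_{B_\eps}})=u(x_0)$ gives $\mE^{x_0}\int_0^{\tau_{B_\eps}}Lu(X_s)\ds=0$. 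Since $\mE^{x_0}\tau_{B_\eps}>0$, continuity of $Lu$ at $x_0$ implies
\[
|Lu(x_0)|\le\sup_{z\in B_\eps}|Lu(z)-Lu(x_0)|\to 0\quad\text{as }\eps\to 0^+,
\]
so $Lu(x_0)=0$; since $x_0\in D$ was arbitrary, $Lu\equiv 0$ on $D$.

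The main obstacle is identity \eqref{eq:planW}: because the far-field $w$ is neither smooth nor compactly supported, the classical Dynkin formula does not apply to it directly. The resolution is that $w$ vanishes on a neighbourhood of $B_\eps$, so $Lw$ is a harmless convergent integral on $B_\eps$, and the Ikeda--Watanabe formula with a Fubini interchange delivers the required occupation-time representation.
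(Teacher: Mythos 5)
Your argument is correct and essentially the paper's: both decompose $u$ near the given point into a $C_c^2(\Rd)$ piece plus a remainder vanishing in a neighbourhood, treat the smooth piece by classical Dynkin/generator theory, treat the far piece through the Ikeda--Watanabe/Green-function representation, and conclude from the mean value property by shrinking balls. The only difference is presentational: you finish with the occupation-time form of Dynkin's formula, dividing by $\mE^{x_0}\tau_{B_\eps}$ and using continuity of $Lu$, whereas the paper packages the same limit via the Dynkin characteristic operator and the weak convergence of the normalized Green measure to the Dirac mass at $x$.
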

\begin{proof}
	By Theorem~\ref{lem:harmc2}, $u\in C^2(D)$.
	Let $x\in U\subset \subset D$.
	Let $\phi\in C_c^2(D)$ be such that $u=\phi$ on $U$. Let $w=u-\phi$.
	We recall that on $C^2_c(\Rd)$, $L$ coincides with the generator of the semigroup $\{P_t\}$   \cite[Theorem~31.5]{MR1739520} and also
	with the Dynkin  characteristic operator \cite[Chapter~V.3]{MR0193671}
	\[\mathcal U \phi(x)=\lim_{r\to 0}\frac{\mathbb E^x \phi(X_{\tau_{B(x,r)}})-\phi(x)}{\mathbb E^x\tau_{B(x,r)}}.\]
	Since $w=0$ in a neighborhood of $x$, by Corollary~\ref{cor:ac} we get
	\begin{align*}
	\mathcal U w(x)&=\lim_{r\to 0}\frac{\mathbb E^x w(X_{\tau_{B(x,r)}})}{\mathbb E^x\tau_{B(x,r)}}\\
	&=\lim_{r\to 0}\frac1{\mathbb E^x\tau_{B(x,r)}}\int_{B(x,r)^c}\int_{B(x,r)} G_{B(x,r)}(x,z)\nu(z,y)\,\dz w(y)\,\dy\\
	&=\lim_{r\to 0}\frac1{\mathbb E^x\tau_{B(x,r)}}\int_{B(x,r)} G_{B(x,r)}(x,z)\int_{U^c}\nu(z,y)w(y)\,\dy\dz.
	\end{align*}
	By Lemma \ref{lem:harml1} we have $\int_{B(x,r)^c}\nu(x,y)|u(y)|\,\dy<\infty$ for $r>0$.
	It follows that $z\mapsto \int_{U^c}\nu(z,y)w(y)\,\dy$ is a bounded continuous function near $x$.
	Since
	$\mathbb E^x \tau_{B(x,r)}=\int_{B(x,r)} G_{B(x,r)}(x,z)\,\dz$, we see that $G_{B(x,r)}(x,z)\,\dz/\int_{B(x,r)} G_{B(x,r)}(x,z)\,\dz$ converges weakly to the Dirac mass at $x$ as $r\to 0$. Therefore,
	$\mathcal U w(x)=\int_{U^c}\nu(x,y)w(y)\,\dy = \int_{\mR^d} (w(y) - w(x))\nu(x,y) \,\dy = Lw(x)$.
	We get
	$$Lu(x)=L\phi(x)+Lw(x)=\mathcal U \phi(x)+\mathcal U w(x)=\mathcal U u(x).$$
	On the other hand, by the mean value property of $u$ we have $\mathcal U u(x)=0$. Therefore $L u(x)=0$.
\end{proof}
We should warn the reader that
for operators $L$ more general than those considered here, $L$-harmonic functions may lack sufficient regularity to calculate $L u$ pointwise, see remarks after Corollary~20 in Bogdan, Sztonyk \cite{MR2320691}.

\begin{lemma}\label{lem:dynk}
		Assume that \eqref{es:nuSc1} holds. Let $U\subset\subset D$ be open and Lipschitz. Let $u \in C^2(\oU)$ and $\int_{\mR^d} |u(y)| (1\wedge\nu(y))\, \dy < \infty$.
  Then $Lu$  is bounded on $\oU$ and
  for every $x\in \mR^d$,
		\begin{equation}\label{eq:A1}
		\mE^xu(X_{\tau_U}) - u(x) =  \int_U G_U(x,y) L u(y)\, \dy.
		\end{equation}
	\end{lemma}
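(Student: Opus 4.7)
The plan is to first verify that $Lu$ is well-defined pointwise and bounded on $\oU$, and then to establish the identity by a decomposition $u=u_1+u_2$, where $u_1\in C_c^2(\mR^d)$ agrees with $u$ on an open neighborhood of $\oU$, while $u_2$ vanishes on that neighborhood. The classical Dynkin formula applies to $u_1$ because $u_1$ lies in the domain of the generator, and $u_2$ is handled directly via the Ikeda--Watanabe formula \eqref{eq:IW} because the principal value defining $Lu_2$ collapses to an absolutely convergent integral on $U$.

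For the pointwise boundedness of $Lu$ on $\oU$, I fix $\eps>0$ so small that the $C^2$-extension of $u$ is $C^2$ and bounded on $V_0:=\oU+\overline{B_\eps}$, and write
\[Lu(x)=\tfrac12\!\!\int_{|z|<\eps}\!\!\!(u(x+z)+u(x-z)-2u(x))\nu(z)\,\dz+\!\!\int_{|z|\geq\eps}\!\!(u(x+z)-u(x))\nu(z)\,\dz\]
for $x\in\oU$. Taylor's theorem bounds the first term by $\|D^2u\|_{L^\infty(V_0)}\int_{|z|<\eps}|z|^2\nu(z)\,\dz<\infty$. I then split the second term at some large radius $R$: on the annulus $\eps\leq|z|\leq R$, combining $\nu(z)\leq\nu(\eps)$ with the $L^1_{loc}$-hypothesis gives a uniform bound by a constant times $\|u\|_{L^1(\oU+B_R)}$; on $|z|>R$, the change of variables $y=x+z$ together with $\nu(|y-x|)\leq C\nu(|y|)$ for $|y|$ large (obtained by iterating \eqref{es:nuSc1} from \textbf{A2}) reduces the integral to a constant multiple of the finite tail $\int_{B_\rho^c}|u(y)|\nu(|y|)\,\dy$, uniformly in $x\in\oU$. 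This uniform tail estimate is the main technical obstacle, since it is precisely the point at which the mild $L^1_{loc}$ assumption on $u$ must be combined with the tail hypothesis to yield a pointwise uniform bound.

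Now pick a cutoff $\chi\in C_c^\infty(\mR^d)$ supported in the $C^2$-neighborhood of $\oU$, with $\chi\equiv 1$ on some open $V\supset\oU$, and set $u_1:=\chi u\in C_c^2(\mR^d)$ and $u_2:=u-u_1=(1-\chi)u$. Since $u_1$ lies in the domain of the semigroup generator, which coincides with $L$ on $C_c^2$ by \cite[Theorem~31.5]{MR1739520}, and $Lu_1$ is bounded, the classical Dynkin formula applied at $\tau_U\wedge n$, followed by dominated convergence (using $\mE^x\tau_U<\infty$ because $U$ is bounded), yields $\mE^x u_1(X_{\tau_U})-u_1(x)=\int_U G_U(x,y)Lu_1(y)\,\dy$. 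For $u_2$, which vanishes on $V\supset\oU$, the principal value collapses for $y\in U$ and, since $u_2\equiv 0$ also on $V\setminus U$,
\[Lu_2(y)=\int u_2(y+z)\nu(z)\,\dz=\int_{U^c}u_2(z)\nu(y,z)\,\dz.\]
Because $\nu(\mR^d)=\infty$, the process exits $U$ only by jumps, with $X_{\tau_U-}\in\oU\subset V$ almost surely, so \eqref{eq:IW} integrated in time, combined with Fubini (justified by bounds analogous to those on $Lu$), gives
\[\mE^x u_2(X_{\tau_U})=\int_{U^c}\!\!\int_U u_2(z)G_U(x,y)\nu(y,z)\,\dy\,\dz=\int_U G_U(x,y)Lu_2(y)\,\dy.\]
Adding the identities for $u_1$ and $u_2$ and using linearity $Lu=Lu_1+Lu_2$ pointwise on $U$ (noting $u_2\equiv 0$ on $U$) proves the lemma for $x\in U$; for $x$ in the interior of $U^c$, $\tau_U=0$ and $G_U(x,\cdot)\equiv 0$, rendering the identity trivial.
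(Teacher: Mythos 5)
Your argument is correct, but it reaches the identity \eqref{eq:A1} by a genuinely different route than the paper. The boundedness of $Lu$ on $\oU$ is proved the same way in both (Taylor expansion near the diagonal, $\nu(\epsilon)$ on a bounded annulus, and the tail estimate from \eqref{es:nuSc1}). For the identity itself, the paper first establishes \eqref{eq:A1} for $u\in C_c^\infty(\mR^d)$ via Dynkin's formula and then treats the general $u$ by an approximation argument: it takes $\phi_n\in C_c^\infty(\mR^d)$ converging to $u$ simultaneously in $C^2(\oU)$, in $L^1$ of a large ball and in $L^1(\nu\,\mathbf{1}_{B_\rho^c})$, shows $L\phi_n\to Lu$ uniformly on $U$, and uses $P_U(x,\cdot)\lesssim\nu(x,\cdot)$ away from $U$ to pass to the limit in $\mE^x\phi_n(X_{\tau_U})$. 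You instead avoid approximation altogether by the cutoff decomposition $u=\chi u+(1-\chi)u$: the compactly supported $C^2$ part lies in the domain of the generator (\cite[Theorem~31.5]{MR1739520}, exactly as the paper invokes it in Lemma~\ref{lem:harmlu}), so Dynkin's formula applies directly, while for the part vanishing on a neighborhood $V\supset\oU$ the operator reduces to an absolutely convergent integral and the expectation is computed from the Ikeda--Watanabe formula \eqref{eq:IW}. Your route is self-contained and spares the construction of an approximating sequence with three simultaneous modes of convergence; the paper's route reuses machinery it needs elsewhere (cf.\ \cite[(7)]{MR3251822}) and needs no cutoff.

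Two small points. Your justification that ``the process exits $U$ only by jumps because $\nu(\mR^d)=\infty$'' is not valid for an arbitrary bounded open $U$: no regularity of $\partial U$ is assumed here, and excluding a continuous exit through the boundary is exactly what Lemma~\ref{l:nuSc} needs VDC for. Fortunately you do not need it: on the event $\{X_{\tau_U-}=X_{\tau_U}\}$ one has $X_{\tau_U}\in\partial U\subset V$, where $u_2=0$, so that event contributes nothing, and applying \eqref{eq:IW} only on the jump-exit event already yields $\mE^x u_2(X_{\tau_U})=\int_{V^c}\int_U u_2(z)G_U(x,y)\nu(y,z)\,\dy\,\dz$. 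Second, writing $u_1=\chi u\in C_c^2(\mR^d)$ tacitly reads the hypothesis $u\in C^2(\oU)$ as ``$u$ coincides with a $C^2$ function on a neighborhood of $\oU$''; this is the same reading the paper's own proof makes (``$u$ is $C^2$ on $\oU+B_{2\eps}$'') and is what holds in the application in Lemma~\ref{lem:BDL}, so it is not a gap relative to the paper, but it is worth being aware of.
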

\begin{proof}
 Both sides of \eqref{eq:A1} are equal to zero for $x\notin U$, so let $x\in \oU.$
To prove that $Lu(x)$ is bounded on $\overline{U}$ we choose $\eps>0$ so small that $u$ is $C^2$ on $\oU + B_{2\eps}.$ In particular $u$ and its second-order partial derivatives $D^2 u$ are bounded on $\oU + B_\eps$.
As usual by Taylor's formula,
		\begin{eqnarray*}
		|Lu(x)| &=& \left|\tfrac12\pord (2u(x) - u(x+y) - u(x-y) )\nu(y)\, \dy\right|\\
		&\leq& \tfrac12\sup_{\stackrel{\xi\in \overline{U}+B_\epsilon}{|\alpha| = 2}}|\partial^\alpha u(\xi)|\int_{B_\epsilon} |y|^2\nu(y)\,\dy + \tfrac12\int_{B_\epsilon^c}\left| (2u(x) - u(x+y) - u(x-y) \right|\nu(y) \,\dy
		\\
&\leq& C_\epsilon \int_{B_\epsilon} |y|^2\nu(y)\,\dy
+ |u(x)|\nu(B^c_\epsilon)+ \int_{B^c_\epsilon} |u(x+y)|\nu(y)\,\dy.
		\end{eqnarray*}
We only need to estimate the last integral. Let $R= \eps + \sup\limits_{x\in U}|x|.$
Then,
\begin{eqnarray}\label{eq:bound-L}
\int_{B_\epsilon^c}|u(x+y)|\nu(y)\,\dy &=& \int_{B_\epsilon^c(x)}|u(z)|\nu(x,z)\,\dz\nonumber\\
&=&\int_{B_\epsilon^c(x)\cap B_{2R}}|u(z)|\nu(x,z)\,\dz+ \int_{B_\epsilon^c(x)\cap B^c_{2R}}|u(z)|\nu(x,z)\,\dz.
\end{eqnarray}
 The first integral in \eqref{eq:bound-L}
 is not greater than $\nu(\epsilon)\int_{B_{2R}}|u(z)|\,\dz<\infty$.
For the second integral
we note that  $x\in\overline U,$ $z\notin B_{2R},$ imply $|z-x|\geq |z|-|x|\geq |z|-R$. From \eqref{es:nuSc1} there is $C_R>0$ such that
$\nu(z,x)\leq C_R\nu(z)$ and so the integral is bounded by $C_R\int_{B_{2R}^c}|u(z)|\nu(z)\,\dz<\infty$.

Collecting all the bounds together we see that
\begin{eqnarray*}
|Lu(x)|&\leq& C_\eps\int (|y^2|\wedge 1)\nu(y)\,\dy+2\|u\|_{\infty}\nu(B_\epsilon^c)\\
&&+ 2C_\epsilon\int_{B_{2R}}|u(z)|\,\dz + 2M\int_{B_{2R}^c}|u(z)|\nu(z)\,\dz.
\end{eqnarray*}

For the second part of the statement, let $u = \varphi + h$, where $\varphi \in C_c^2(\mR^d)$ and $h = 0$ in a neighborhood of $U$. Note that $h\in C^2(\oU)$. We claim that \eqref{eq:A1} holds with $\varphi$ and $h$ in place of $u$. Recall that $L$ coincides with the generator of the semigroup of $X_t$ for functions from $C_c^2(\mR^d)$. Therefore, by Dynkin's formula \cite[(5.8)]{MR0193671},
 		\begin{equation}\label{eq:dyn}
		\mE^x \varphi(X_{\tau_U}) - \varphi(x) = \mE^x \int_0^{\tau_U} L\varphi(X_t) \,\dt= \int_0^\infty \mE^x \left[L\varphi(X_t); \tau_U>t\right]\,\dt.
		\end{equation}
Here the change of the order of integration is justified because $L\varphi$ is bounded on $\overline U$  and $\mE^x \tau_U < \infty$,
cf., e.g., \cite{MR3350043,pruitt1981}.
As usual, we let $p^U$ denote the transition density of the process killed upon leaving $U$. Since $L\varphi$ is measurable and bounded on $\overline{U},$ for every $t>0$ we have
\[\mE^x \left[L\varphi(X_t); \tau_U>t\right] = \int_U p^U_t(x,y) L\varphi (y)\,\dy.\]
Therefore
\[\mE^x\varphi(X_{\tau_U})-\varphi(x)= \int_0^\infty\int_Up^U_t(x,y)L\varphi(y)\,\dy{\rm d}t = \int_U G_U(x,y)L\varphi(y)\,\dy,\]
which proves the claim for $\varphi$.

Let $x\in U$. By the Ikeda--Watanabe formula we have
\begin{align*}
\mE^x h(X_{\tau_U}) - h(x) = &\int_{U^c} h(z) P_U(x,z)\, \dz = \int_{U^c} h(z)\int_U G_U(x,y)\nu(y,z)\,\dy \dz\\
= &\int_U G_U(x,y) \int_{U^c} h(z) \nu(y,z)\, \dz\dy = \int_U G_U(x,y) \int_{\mR^d} (h(z)) - h(y)) \nu(y,z) \,\dz\dy\\
= &\int_U G_U(x,y) Lh(y)\,\dy.
\end{align*}
The usage of Fubini's theorem is justified as follows: for $z\in {\rm supp}\,h$ we have $|h(z)| P_D(x,z) \approx |h(z)| \nu(x,z)$, which is integrable by \eqref{es:nuSc1} and the integrability assumption on $u$.
\end{proof}
	
We next generalize the Hardy--Stein formula of  Bogdan, Dyda and Luks \cite[Lemma 3]{MR3251822}.
	\begin{lemma}\label{lem:BDL}
		Assume that {\rm \textbf{A1}} holds. If $u\colon\mathbb R^d\to\mathbb R$ is $L$-harmonic in $D$ and $U \subset\subset D$ is open and Lipschitz, then for every $x\in\mR^d$
		\begin{equation}\label{eq:lemBDL}
		\mE^x u^2(X_{\tau_U}) = u(x)^2 + \int_U G_U(x,y)\pord (u(z) - u(y))^2 \nu(z,y)\, \dz \dy.
		\end{equation}
	\end{lemma}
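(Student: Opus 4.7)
The plan is to apply Dynkin's formula (Lemma~\ref{lem:dynk}) to the function $u^2$ on $U$, after establishing the pointwise identity
\[
L(u^2)(y) = \int_{\Rd}(u(z)-u(y))^2\, \nu(z,y)\dz, \qquad y\in D.
\]
Granted this, Lemma~\ref{lem:dynk} yields $\mE^x u^2(X_{\tau_U})-u(x)^2 = \int_U G_U(x,y)\, L(u^2)(y)\dy$, which upon substitution is exactly \eqref{eq:lemBDL}.

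To justify the pointwise formula, I would use that $u\in C^2(D)$ by Theorem~\ref{lem:harmc2} (so $u^2\in C^2(\overline V)$ for every $V$ with $\overline V\subset D$) and that $Lu\equiv 0$ on $D$ by Lemma~\ref{lem:harmlu}. Writing $L$ in the symmetric form of \eqref{eq:L-def} and invoking the elementary identity
\[
u^2(y+z)+u^2(y-z)-2u^2(y) = (u(y+z)-u(y))^2+(u(y-z)-u(y))^2 + 2u(y)\bigl(u(y+z)+u(y-z)-2u(y)\bigr),
\]
together with $\nu(-z)=\nu(z)$, the principal-value limit defining $L(u^2)(y)$ splits into an honest (nonnegative) integral of squared increments plus $2u(y)Lu(y)$; the second piece vanishes on $D$, and the change of variable $z\mapsto z-y$ yields the formula.

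The main obstacle is verifying the integrability hypothesis of Lemma~\ref{lem:dynk} for $u^2$, namely $\int_{B_\rho^c} u^2(y)\nu(y)\dy<\infty$, which is stronger than the integrability of $|u|\nu$ at infinity guaranteed for $L$-harmonic $u$ by \cite[Theorem~1.9]{arXiv161110304GK}. I would bypass this by truncation. Fix $V$ with $\overline U\subset V\subset\subset D$ and, for $n\geq 1$, define $u_n(z)=(-n)\vee u(z)\wedge n$ for $z\in V^c$ and $u_n(y)=\mE^y u_n(X_{\tau_V})$ for $y\in V$. Each $u_n$ is bounded by $n$ and regular $L$-harmonic in $V$ (hence $L$-harmonic in $U$, by Lemma~\ref{lem:harmonic}), so both hypotheses of Lemma~\ref{lem:dynk} are satisfied for $u_n^2$ and \eqref{eq:lemBDL} holds with $u_n$ in place of $u$. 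As $n\to\infty$, $u_n\to u$ pointwise; the identity for $u$ then follows by combining the $1$-Lipschitz character of the truncation (which bounds $|u_n(z)-u_n(w)|$ by $|u(z)-u(w)|$ on $V^c$ and propagates to all of $\Rd$ via the Poisson representation of $u_n$ on $V$ and Jensen's inequality) with Fatou's lemma on both sides of \eqref{eq:lemBDL}: if the right-hand side is finite then dominated convergence pins down equality, and otherwise both sides of \eqref{eq:lemBDL} are $+\infty$.
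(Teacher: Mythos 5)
Your core strategy (Dynkin's formula for $u^2$ together with the pointwise identity $Lu^2(y)=\int_{\Rd}(u(z)-u(y))^2\nu(z,y)\dz$, which uses $Lu=0$ on $D$) is exactly the paper's, and you correctly spotted that the integrability hypothesis of Lemma~\ref{lem:dynk} for $u^2$ is the real issue. The gap is in how you dispose of it. Your truncation scheme rests on the claim that the $1$-Lipschitz bound $|u_n(z)-u_n(w)|\le|u(z)-u(w)|$, valid on $V^c$, ``propagates to all of $\Rd$'' through the Poisson representation. It does not: for $y\in V$ and $z\in V^c$, Jensen/the triangle inequality only give $|u_n(y)-u_n(z)|\le \mE^y|u(X_{\tau_V})-u(z)|$, an \emph{averaged} increment of $u$, not $|u(y)-u(z)|$; and for two interior points the bound fails outright (if $u(y_1)=u(y_2)$ but the harmonic measures from $y_1,y_2$ weight the set $\{|u|>n\}$ on $V^c$ differently, then $u_n(y_1)\neq u_n(y_2)$ while $|u(y_1)-u(y_2)|=0$). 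So the dominating function $(u(z)-u(w))^2$ for your dominated-convergence step is not available. The infinite case is also not closed by your argument: Fatou gives $\mE^x u^2(X_{\tau_U})\le\liminf_n\mE^x u_n^2(X_{\tau_U})$ and the analogous inequality on the right-hand side, i.e.\ bounds in the wrong direction, so from the right-hand side for $u$ being $+\infty$ you cannot conclude that $\mE^x u^2(X_{\tau_U})=+\infty$. (A reverse bound coming from the truncation is $\mE^x u_n^2(X_{\tau_U})\le\mE^x u^2(X_{\tau_V})$, by Jensen and the strong Markov property, but it involves $\tau_V$ rather than $\tau_U$ and does not help.)

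The paper avoids truncation by a dichotomy on $\int_{U^c}u(z)^2\nu(x,z)\dz$. If this integral is infinite, both sides of \eqref{eq:lemBDL} are shown to be $+\infty$ directly: the left side via $P_U(x,z)\ge P_{B(x,\delta)}(x,z)\apprge\nu(x,z)$, and the right side via \textbf{A2} and the set $A(y)=\{z\in U^c:(u(y)-u(z))^2\ge\tfrac12 u(z)^2\}$, on whose complement $|u(z)|\lesssim|u(y)|$. If instead $\int_{U_\eps^c}u(z)^2\nu(x,z)\dz<\infty$ for all $\eps>0$, then the hypothesis of Lemma~\ref{lem:dynk} holds for $u^2$ itself and your first paragraph goes through verbatim. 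To repair your write-up you should either adopt this case split or supply a genuine domination (or uniform integrability) argument for the truncated family; as written, both the dominated-convergence step and the ``otherwise both sides are $+\infty$'' step are unjustified.
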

	\begin{proof}
Note that {\rm \textbf{A1}} yields \eqref{es:nuSc1}, which lets us use Lemma \ref{lem:dynk}.
As in Lemma \ref{lem:dynk} it suffices to consider $x\in U$.

\textsc{Case 1.} Assume that $\int_{U^c} u(z)^2 \nu(z,x)\, \dz = \infty.$ Then $\mE^x u(X_{\tau_U})^2 = \infty$ as well.
Indeed, take $\delta>0$ such that $B(x,2\delta)\subset U$. By domain monotonicity we have $P_U(x,z) \geq P_{B(x,\delta)}(x,z),$ $z\in U^c.$

For $z\in U^c$ we have $|z-x|\geq 2\delta$, hence
 $P_{B(x,\delta)}(x,z) \apprge \nu(x,z),$ cf. \cite[Lemma 2.2]{MR3729529}. It follows	that	 
		\begin{align*}
		\mE^x u^2(X_{\tau_U}) & = \int_{U^c} u(z)^2 P_U(x,z)\, \dz
\geq \int_{U^c} u(z)^2 P_{B(x,\delta)}(x,z)\, \dz\apprge \int_{U^c} u(z)^2 \nu(x,z) \,\dz = \infty.
		\end{align*}
		Further, we claim that in this case the right-hand side of \eqref{eq:lemBDL} is also infinite. Namely, we will check that $\pord (u(z) - u(y))^2 \nu(z,y)\, \dz=\infty$ for $y\in B(x,\frac \delta 2),$ and then use the positivity of the Green function $G_U$ in $U$ \cite{MR3729529}. If $z\in U^c$, then
$|z-y|\leq C_{\delta}|z-x|$ with some $C_{\delta}>0$. Therefore, by \eqref{es:nuSc1}, $\int_{U^c} u(z)^2 \nu(x,z) \dz = \infty$  yields $\int_{U^c} u(z)^2 \nu (y,z) \dz = \infty$. Since $u$ is continuous, the following sets are well-defined: $A = A(y) = \{z\in U^c: (u(y) - u(z))^2 \geq \frac 12 u(z)^2\}$.
 Note that $|u(z)|\leq C|u(y)|$  for $z\in U^c \setminus A$  and thus $\int_{U^c\setminus A} u(z)^2 \nu(x,z)\,\dz \lesssim u(y)^2 \nu(x,U^c) < \infty$. It follows that
$\int_A u(z)^2 \nu(y,z)\, \dz = \infty$, in particular $A(y)$ is nonempty. Therefore,
		\begin{align*}
		\infty & =
		\frac 12 \int_A u(z)^2 \nu(y,z)\, \dz \leq \int_A (u(y) - u(z))^2 \nu(y,z) \,\dz \leq \pord (u(z) - u(y))^2 \nu(y,z)\, \dz,
		\end{align*}
and the claim follows.

\textsc{Case 2.} Now assume that $\int_{U^c} u(y)^2 \nu(x,y)\, \dy <\infty$ for every $\eps>0$. Since $U\subset\subset D$, Theorem~\ref{lem:harmc2} implies that  $u^2\in C^2(\oU)$, hence $u^2\in L^1_{loc}(\mR^d)$. By Lemma \ref{lem:dynk}, $Lu^2$ is bounded in $\oU$ and
		\begin{equation}\label{eq:eu2}
		\mE^x u^2(X_{\tau_U}) = u(x)^2 + \int_U G_U(x,y) Lu^2(y)\, \dy.
		\end{equation}
 We can now compute $Lu^2(y)$ for $y\in U.$ The $L$-harmonicity of $u$ yields (cf. \cite[proof of Lemma 3]{MR3251822})
		\begin{align}\label{eq:lu2}
		Lu^2(y) = \pord (u(z) - u(y))^2  \nu(z,y) \,\dz, \quad y\in U.
		\end{align}
		The latter integral is convergent because $u$ is Lipschitz near $y,$ and far from $y$ we can use the assumed integrability condition.
		Inserting \eqref{eq:lu2} into \eqref{eq:eu2} yields the desired result.
	\end{proof}

\begin{proof}[Proof of Theorem~\ref{th:wazne}]
If $u$ is $L$-harmonic in $D$ and $x\in D$, then
		\begin{equation}\label{eq:BDL}
		\sup_{x\in U\subset\subset D} \mE^x u(X_{\tau_U})^2 = u(x)^2 + \int_D G_D(x,y)\pord (u(z) - u(y))^2 \nu(z,y) \,\dz \dy.
		\end{equation}
Indeed, for $x,y \in D$ we have $\sup_{U\subset\subset D}G_U(x,y)=G_D(x,y)$, hence \eqref{eq:BDL} follows from Lemma~\ref{lem:BDL} and the monotone convergence theorem. Clearly, \eqref{eq:BDL} is a reformulation \eqref{eq:BDL-1}.
\end{proof}

\subsection{Various notions of harmonicity}

\begin{definition}\label{def:weakdistr}
	{\rm (i)}	We say that $u\colon \mR^d \to \mR$ is weakly harmonic in $D$, if $u\in \mathcal{V}^D$ and $\ED(u,\phi) = 0$ for every $\phi\in \mathcal{V}^D_0$.
		
	{\rm (ii)}	We say that $u\in L^1_{loc}(\mR^d)$ is distributionally harmonic in $D$ if $\int_{\mR^d} u L\phi = 0$ for every $\phi \in C_c^\infty (D)$.
\end{definition}
The weak harmonicity implies the distributional harmonicity because of the following result.
\begin{lemma}\label{lem:weakdistr}
If \eqref{es:nuSc1} holds, $D$ is bounded, $u\in \mathcal{V}^D$, and $\phi \in C_c^\infty(D)$, then $\ED(u,\phi) = -\int_{\mR^d} u L\phi$.
\end{lemma}
\begin{proof}
	As in \cite[Lemma 3.3]{MR1825645} we obtain
		\begin{align*}
	-\int_{\mR^d} u L\phi = &\lim\limits_{\eps\to 0} \int_{\mR^d}u(x) \int_{
		|y-x|>\eps} (\phi(x) - \phi(y)) \nu(x,y)\, \dy \dx\\
	& = \lim\limits_{\epsilon\to 0}
	\int_{\mR^d}\phi(x) \int_{
		|y-x|>\eps
	} (u(x) - u(y)) \nu(x,y)\, \dy \dx\\
	&=\lim\limits_{\epsilon\to 0}
	\int_{\mR^d}\phi(y) \int_{
		|y-x|>\eps
	} (u(y) - u(x)) \nu(x,y)\, \dy \dx\\
	&=
	\frac12\lim\limits_{\epsilon\to 0}
	\iint_{
		|y-x|>\eps}
	(\phi(x)-\phi(y))
	(u(x) - u(y)) \nu(x,y)\, \dy \dx= \ED(u,\phi).
	\end{align*}
	We note that interchanging the limit and the integrations in the above calculations is justified and that the conditions  needed to extend the proof of \cite[Lemma 3.3]{MR1825645} to the present setting are satisfied.
Indeed, in the last line we may use the dominated convergence theorem, the inequality $2|ab|\le a^2+b^2$, and the fact that $u,\phi\in\mathcal{V}^D$.
Next, let $\epsilon > 0$. Arguing as in \cite[Proposition 1.2]{MR3738190}, for $x\in D$, we get that $|u(x) \int_{|x-y|>\eps} (\phi(x) - \phi(y))\nu(x,y) \,\dy|$ is bounded by $C|u(x)| \|\phi\|_{C^2} \in L^1(D)$. Furthermore if we let $U = {\rm supp}\, \phi$, then for $x \in D^c$ we have $|u(x) \int_{|x-y|> \eps} (\phi(x) - \phi(y))\nu(x,y)\, \dy| \leq 2\|\phi\|_{\infty} |u(x)|\nu(x,U)\,\dx \in  L^1(D^c)$, by Lemma \ref{lem:l1}. By the dominated convergence theorem,
	\begin{align*}
	-\int_{\mR^d} uL\phi
	&= \lim\limits_{\eps\to 0} \int_{\mR^d}u(x) \int_{
	|y-x|>\eps
	} (\phi(x) - \phi(y)) \nu(x,y)\, \dy \dx.
	\end{align*}
	Further, note that $\int_{\mR^d} |u\phi| < \infty$. By \eqref{es:nuSc1} and Lemma~\ref{lem:l1}, we get that
	$$\int_{\mR^d}\int_{|x-y|>\eps} |u(x)\phi(y)|\nu(x,y) \,\dx \dy \lesssim \int_{\mR^d} |\phi(y)| \,\dx \int_{\mR^d} |u(x)| (1\wedge \nu(0,x)) \,\dx< \infty.$$ Thus the assumptions of \cite[Lemma 3.3]{MR1825645} are satisfied. The proof is complete.

\end{proof}

\section{{
Solving the Dirichlet problem}}\label{sec:pfs}

\subsection{Sobolev regularity of Poisson integrals}
\begin{proof}[Proof of Theorem~\ref{th:toz}]
	{\rm (i)}
	Assume that $g\in L^1(P_D(x,\cdot))$ for some, hence for all $x\in D$, cf. the proof of Lemma \ref{lem:harmonic}. By Lemma \ref{lem:integrability} this is true if $\HD(g,g)<\infty$.We are going to prove that $\HD(g,g)=\mathcal{E}_D(u,u),$ where $u = P_D[g]$ is the Poisson extension of $g.$
By
\eqref{e:achm},
 $P_ D(x,z)\,\dz$ is a probability measure on $ D^c$ for every $x\in  D$. The integral of $g$
against this measure is equal to $u(x)$.
Recall that if $Y$ is a (real-valued) random variable
and $\mathbb E |Y|<\infty$, then for every $a\in\mathbb R$,
\begin{equation*}\label{eq:variances}
\mathbb E(Y-a)^2 =  \mathbb{E} (Y - \mathbb{E} Y)^2+\left(\mathbb EY-a\right)^2,
\end{equation*}
even if $\mathbb E Y^2=\infty$.
In particular, for $Y=u(X_{\tau_D})$
we have
$$\mathbb E^xY =u(x)\quad  \mathrm{and}\quad  \mathbb E^x Y^2=\int_{D^c}u^2(z)P_D(x,z)\,\dz,\quad x\in D.$$
Therefore,
$$\int_{ D^c}(u(w)-u(z))^2 P_D(x,z)\,\dz
= \int_{ D^c} (u(x)-u(z))^2 P_D(x,z)\,\dz + (u(w)-u(x))^2,\quad x\in D,\, w\in D^c,$$
and further, by Lemma \ref{lem:integrability},
 \begin{eqnarray}\label{eq:firstpart}
 2\HD(g,g)&=& \int_ D\int_{ D^c} \int_{ D^c} ((u(x)-u(z))^2P_ D(x,z)\nu(x,w)\,\dz\dw\dx \nonumber \\
 && +\int_ D\int_{ D^c} (u(w)-u(x))^2 \nu(x,w)\,\dw\dx \nonumber\\
 &=& \int_ D\int_{ D^c} (u(x)-u(z))^2 P_ D(x,z)\kappa_ D(x)\,\dz\dx + \int_ D\int_{ D^c} (u(w)-u(x))^2 \nu(x,w)\,\dw\dx.
  \end{eqnarray}
By \eqref{e:achm}, for $U^c$ satisfying VDC with $|\partial U| = 0$ we have
$$
\int_{U^c}(u(x)-u(z))^2P_U(x,z)\,\dz=
\mathbb E^x (u(X_{\tau_U})-u(x))^2,\quad x\in U.$$
By formula \eqref{eq:BDL}  applied to the function $z\mapsto \widetilde u(z)= u(z)-u(x)$,
for
each $x\in  D$ we get
 \begin{equation}\label{eq:aa}
 \sup_{x\in U\subset\subset D} \int_{U^c}(u(x)-u(z))^2P_U(x,z)\,\dz = \int_ D G_ D(x,y)\int_{\mathbb R^d} (u(y)-u(z))^2\nu(y,z)\,\dz\dy.
 \end{equation}

By Remark~\ref{rem:martingale}, $\{u(X_{\tau_U)}\}_{U\subset\subset  D}$ is a closed martingale.
Therefore the Hardy--Stein formula
\eqref{eq:BDL} remains valid if we replace  $\sup_{x\in U\subset\subset D}$ by $\lim_{x\in U\uparrow D}$ with $U\subset\subset D$ increasing to $D$. By \eqref{e:achm2}, for almost every trajectory of $X$, there exists $U\subset\subset D$ such that
$X_{\tau_U}=X_{\tau_D}$, so
$u(X_{\tau_U})\to u(X_{\tau_D})=g(X_{\tau_D})$ as $U\uparrow D$.
By the martingale convergence theorem
and \eqref{eq:aa},
 \begin{equation*}\label{eq:a}
\int_{ D^c}(u(x)-u(z))^2P_ D(x,z)\,\dz = \int_ D G_ D(x,y)\int_{\mathbb R^d} (u(y)-u(z))^2\nu(y,z)\,\dz\dy,\quad x\in D.
 \end{equation*}
We now turn our attention to the first integral in
\eqref{eq:firstpart}.
By Fubini--Tonelli,
 \begin{eqnarray*}
&& \int_ D\int_{ D^c} ((u(x)-u(z))^2 P_ D(x,z)\kappa_ D(x)\,\dz\dx\\
 &=&
  \int_ D\int_ D\int_{\mathbb R^d}G_ D(x,y) ((u(y)-u(z))^2\nu(y,z)\kappa_ D(x)\,\dz\dy\dx\\
  &=& \int_{\mathbb R^d}\int_ D \left[\int_ D \kappa_ D(x)G_ D(x,y)\dx\right] ((u(y)-u(z))^2 \nu(y,z)\,\dz\dy.
 \end{eqnarray*}
 Since $D^c$ satisfies VDC,
 by \eqref{eq:skad1}
 and Corollary \ref{c:achm},
\begin{equation}\label{eq:skad}
\int_ D \kappa_ D(x)G_ D(x,y)\,\dx = \mathbb P^y(X_{{\tau_ D}^-}\in  D)=1,\quad y\in  D.
\end{equation}
Therefore,
$$\int_ D\int_{ D^c} (u(x)-u(z))^2 P_ D(x,z)\kappa_ D(x)\,\dz\dx = \int_{\mathbb R^d}\int_ D (u(y)-u(z))^2 \nu(y,z)\,\dz\dy.$$
By this and \eqref{eq:firstpart}, we get part {\rm (i)} of the theorem:
\begin{eqnarray*}
2\HD(g,g)&=& \int_{\mathbb R^d}\int_ D (u(x)-u(y))^2 \nu(x,y)\,\dx \dy + \int_{ D}\int_{ D^c} (u(x)-u(y))^2\nu(x,y)\,\dx \dy
\\
&=& \iint\limits_{\mathbb R^d\times\mathbb R^d\setminus D^c\times  D^c}(u(x)-u(y))^2\nu(x,y)\,\dx \dy = 2\ED (u,u).
\end{eqnarray*}

\smallskip

\noindent {\rm (ii)}
Suppose that $D$ is bounded.
Assume that $u\in \vd$, i.e., $\ED(u,u) < \infty$.  Let $g=u|_{D^c}$.
Since $g$ has an extension in $\vd$, it also has a weakly harmonic extension \cite[Theorem 4.2]{MR3738190} with the same or smaller quadratic form  \cite[Lemma 4.8]{MR3738190}. Put differently, we may assume that $u$ is weakly harmonic, i.e., $\ED(u,\phi) = 0$ for every $\phi \in \mathcal{V}^D_0$. By Lemma \ref{lem:l1}, \ref{lem:weakdistr} and \cite[Theorem 1.1]{GKL}, after a modification on a set of Lebesgue measure zero, $u$ is $L$-harmonic in $D$, in particular for every Lipschitz $U\subset\subset D$ we have $P_U[|u|] < \infty$. Given that fact, the chain of identities from the proof
of part {\rm (i)} can be reversed with $D$ replaced by $U$, and we obtain $\mathcal{E}_U(u,u) = \mathcal{H}_U(u,u)$.
We then let
$U\uparrow D$.
Clearly, $\mathcal{E}_U(u,u) \uparrow \ED(u,u) <\infty$. By Fatou's lemma,
\begin{align*}
	\infty &>2\ED(u,u) =  \lim_{U\uparrow D} 2\mathcal{E}_U(u,u)
= \lim_{U\uparrow D} 2\mathcal H_U(u,u) \\
&\geq \iint_{\mR^d \times \mR^d} (u(z) - u(w))^2 \liminf_{U\uparrow D} \left(\gamma_U(z,w)\mathbf 1_{U\times U}(z,w)\right) \,\dz \dw \\
	&\geq \iint_{D\times D} (u(z) - u(w))^2 \int_{\mR^d} \nu(z,x)\liminf_{U\uparrow D}\left(\int_{\mR^d} G_U(x,y)\nu(y,w)\,\dy\right)\,\dx\dz\dw.
	\end{align*}
By the quasi-left continuity, $\mathbb{P}^x$--almost surely we have $\tau_U \uparrow \tau_D$ , cf. \cite[proof of Lemma~17]{MR1438304} and \cite[Theorem 40.12]{MR1739520}.
By the monotone convergence theorem,
	\begin{align*}
	\liminf_{U\uparrow D}\int_{\mR^d} G_U(x,y)\nu(y,w)\,\dy&= \liminf_{U\uparrow D}\mE^x\int_0^{\tau_U} \nu(X_t,w)\,\dt\\
	=\ &\mE^x \int_0^{\tau_D} \nu(X_t,w) \,\dt = \int_{\mR^d}G_D(x,y)\nu(y,w)\,\dy.
	\end{align*}
Thus, $\mathcal H_D(g,g)<\infty$, which
completes the proof for bounded sets $D$.

For unbounded $D$ we consider the nonempty intersections $D\cap B(0,R)$ for $R>0$. We have $\mathcal{E}_{D\cap B(0,R)}(u,u) < \infty$, so there exists a weakly harmonic $u_R$ such that $u_R = u$ a.e. on $D^c$. By \cite[Lemma 4.8]{MR3738190} and the above considerations $\mathcal E_D(u,u)\geq \mathcal{E}_{D\cap B(0,R)}(u,u) \geq \mathcal{E}_{D\cap B(0,R)}(u_R,u_R) = \mathcal{H}_{D\cap B(0,R)}(u,u)$ for all $R$.
We let $R\to\infty$ and by the monotone convergence theorem we get $\ED(u,u) \geq \HD(u,u)$.
\end{proof}

{
In the setting of {\rm Theorem~\ref{th:toz}} we immediately obtain the following
consequences.

\begin{corollary}\label{cor:ext}
$\mbox{\rm Ext}\, g = P_D[g]$ is a linear isometry from
$\xd$ into $\vd$ and ${\rm Tr}\, u= u|_{D^c}$ is a linear contraction from $\vd$ onto $\xd$. ${\rm Tr}\ {\rm Ext}$ is the identity operator on $\xd$ and ${\rm Ext} {\rm Tr}$ is a contraction on $\vd$. Furthermore ${\rm Tr}\  u=0$ characterizes $u\in \vd_0$.
\end{corollary}
Thus the Poisson integral and the restriction to $D^c$ may serve as the extension and trace between the Sobolev spaces $\vd$ and $\xd$, correspondingly. For the many uses of analogous operators in PDEs we refer the reader to \cite{DINEZZA2012521,MR2597943}.

\begin{corollary}\label{cor:Di1}
{If $P_D[|u|]<\infty$ on $D$, in particular, if $\mathcal{E}_{\mathbb R^d}(u,u)<\infty$, then}
\begin{equation*}
\tfrac12\!\!\!\iint\limits_{ D^c\times  D^c} (u(w)-u(z))^2\left(\gamma_ D(w,z)+\nu(w,z)\right)\,\dw\dz =
\mathcal{E}_{\mathbb R^d}(P_D[u],P_D[u]) \le \mathcal{E}_{\mathbb R^d}(u,u).
\end{equation*}
\end{corollary}

Corollary~\ref{cor:Di1} and the Douglas identity in Theorem~\ref{th:toz} may be considered as
analogues of the Douglas integral \cite[(1.2.18)]{MR2778606}.

\begin{corollary}
		Suppose $L$ satisfies {\bf A1, A2}. Let a nonempty open bounded set $D\subset\mathbb R^d$ have continuous boundary and let {$D^c$} fulfill VDC.
		Then the non-homogeneous Dirichlet problem  \eqref{eq:nhDp} has a unique solution for arbitrary $g\in\xd$ and $f\in L^2(D)$.
\end{corollary}

In the next subsection we get the minimality of the Poisson extension for $\ED$ (and $\mathcal{E}_{\mathbb R^d}$), which lets us represent weakly harmonic functions as the Poisson integrals of the exterior data.

\subsection{Weak and variational solutions}
\label{sec:variational}
\noindent The next proposition shows that weak solutions coincide with the variational solutions of \eqref{eq:Dp}. The proof is classical but we include it here to make our argument self-contained, cf. \cite{MR3447732,MR3738190}.
\begin{proposition}\label{prop:minimal}
	Let $g\in\xd$ and let $u$ be a weak solution of \eqref{eq:Dp}.  If $\overline{g}\colon \mathbb R^d\to\mathbb R$ is another
	measurable func\-tion  equal to $g$ $a.e.$ on $D^c,$ then
	\begin{equation}\label{eq:toprove}
	\ED (u,u)\leq \ED (\overline{g}, \overline{g}).
	\end{equation}
	The converse is also true.
\end{proposition}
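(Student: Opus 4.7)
The plan is to reduce the minimality statement to the classical orthogonality trick in a Hilbert-space/quadratic-form setting: any competitor differs from the weak solution by a test function in $\vd_0$, and weak harmonicity makes that difference orthogonal to $u$ with respect to $\ED$.

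First I would dispose of the trivial case: if $\ED(\overline{g},\overline{g})=\infty$, then \eqref{eq:toprove} holds with nothing to prove, so I may assume $\overline{g}\in\vd$. Set $w=\overline{g}-u$. Since $\vd$ is a vector space (which follows from the elementary inequality $(a-b)^2\le 2a^2+2b^2$ applied inside the defining integral \eqref{eq:ddf}), we have $w\in\vd$. Moreover $w=0$ a.e.\ on $D^c$ because $\overline{g}=g=u$ a.e.\ there, so $w\in\vd_0$.

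Next I would expand the quadratic form using bilinearity:
\begin{equation*}
\ED(\overline{g},\overline{g})=\ED(u+w,u+w)=\ED(u,u)+2\ED(u,w)+\ED(w,w).
\end{equation*}
Since $u$ is a weak solution of \eqref{eq:Dp} and $w\in\vd_0$, the defining property \eqref{eq:ws-a} gives $\ED(u,w)=0$. Hence $\ED(\overline{g},\overline{g})=\ED(u,u)+\ED(w,w)\ge \ED(u,u)$, which is \eqref{eq:toprove}.

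For the converse, suppose $u\in\vd$ equals $g$ a.e.\ on $D^c$ and minimizes $\ED$ among all such extensions. For any $\phi\in\vd_0$ and $t\in\R$, the function $u+t\phi$ still equals $g$ a.e.\ on $D^c$, so by assumption
\begin{equation*}
\ED(u,u)\le \ED(u+t\phi,u+t\phi)=\ED(u,u)+2t\,\ED(u,\phi)+t^2\ED(\phi,\phi).
\end{equation*}
Thus $2t\,\ED(u,\phi)+t^2\ED(\phi,\phi)\ge 0$ for every $t\in\R$, which forces $\ED(u,\phi)=0$, i.e.\ $u$ is a weak solution. The only (mild) subtlety worth flagging is the verification $w\in\vd$, which rests on the vector-space structure of $\vd$; there is no real obstacle here, since the symmetric bilinear form $\ED$ satisfies Cauchy--Schwarz on $\vd\times\vd$.
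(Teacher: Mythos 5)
Your proof is correct and follows essentially the same route as the paper: both rest on the observation that $w=\overline{g}-u\in\vd_0$, so the weak formulation \eqref{eq:ws-a} kills the cross term, and the converse is the identical quadratic-perturbation argument with $u+t\phi$. The only (cosmetic) difference is that you use the Pythagorean expansion $\ED(\overline{g},\overline{g})=\ED(u,u)+\ED(w,w)$, which is slightly cleaner than the paper's variant via $\ED(\overline{g},u)=\ED(u,u)$, Cauchy--Schwarz and cancellation of $\ED(u,u)>0$, since it needs no separate treatment of the case $\ED(u,u)=0$.
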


\begin{proof}
	Note that \eqref{eq:toprove} holds trivially when either
	$\ED (\overline{g}, \overline{g})=+\infty$ or $\ED (u,u)=0$. Therefore we may assume otherwise.
	We have $\overline g-u\in \vd_0.$
	Since $u$ is a weak solution,
	$\ED (u,\overline{g}-u)=0$, hence
	$\ED (\overline{g},u)=\ED (u,u)$ and
	\begin{equation*}\ED (u,u)=\ED (\overline{g},u)\leq \left(\ED (\overline{g},\overline{g})\right)^{1/2}\left(\ED (u,u)\right)^{1/2}.
	\end{equation*}
	Canceling out $\ED (u,u)^{1/2}>0$,
	we obtain \eqref{eq:toprove}.
	
	For the converse, let $\phi \in \vd_0$. Since $u$ is a minimizer, we have
	\begin{equation*}
	0\leq \ED(u+\lambda\phi,u+\lambda\phi) - \ED(u,u) = 2\lambda\ED(u,\phi) + \lambda^2\ED(\phi,\phi),\quad \lambda \in \mR.
	\end{equation*}
	This necessitates that $\ED(u,\phi) = 0$, hence $u$ is a weak solution.
\end{proof}
By Theorem \ref{th:toz}, if $g\in \xd,$ then its Poisson extension belongs to $\vd$. In fact, the Poisson extension $P_D[g]$ is the weak solution of \eqref{eq:Dp}, as we will shortly see.

 \begin{theorem}\label{th:var}
 Suppose $L$ satisfies {\bf A1, A2}. Let nonempty open $D\subset\mathbb R^d$ have continuous boundary and let {$D^c$} fulfill VDC. If  $g\in\xd,$ then $u=P_D[g]$ is a solution to the Dirichlet problem \eqref{eq:Dp}.
 Furthermore, for bounded $D$ the solution is unique.
 \end{theorem}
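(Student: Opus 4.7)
The plan is to show that $u := P_D[g]$ fulfils the three requirements of a weak solution, namely (i) $u\in\vd$, (ii) $u = g$ on $D^c$, and (iii) $\ED(u,\phi) = 0$ for every $\phi\in\vd_0$; and then to deduce uniqueness on bounded $D$ from the strict positivity of the escape intensity $\kappa_D$. Items (i) and (ii) come essentially for free. Lemma~\ref{lem:integrability} together with $g\in\xd$ makes the Poisson integral absolutely convergent on $D$, so $u$ is well-defined and equals $g$ on $D^c$ by construction; Theorem~\ref{th:toz}, whose hypotheses are exactly those of the present theorem, yields $\ED(u,u) = \HD(g,g) < \infty$, so $u\in\vd$. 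Moreover, $u(x) = \mE^x g(X_{\tau_D})$ is regular $L$-harmonic on $D$, hence $L$-harmonic by Lemma~\ref{lem:harmonic}, lies in $C^2(D)$ by Theorem~\ref{lem:harmc2}, and satisfies $Lu \equiv 0$ pointwise on $D$ by Lemma~\ref{lem:harmlu}.

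For (iii), I would first establish the identity for $\phi\in C_c^\infty(D)$ and then extend by density. With $\phi$ vanishing on $D^c$ we have $\ED(u,\phi) = \mathcal{E}_{\mR^d}(u,\phi)$, and the symmetry $\nu(x,y) = \nu(y,x)$ collapses the polarized integrand to
\[ \mathcal{E}_{\mR^d}(u,\phi) = \iint_{\mR^d\times\mR^d} (u(x) - u(y))\phi(x)\nu(x,y)\dx\dy. \]
A symmetric $\eps$-truncation $\{|x - y| > \eps\}$ renders the inner integral absolutely convergent; applying Fubini to the truncated form gives $-\int_D \phi(x)\bigl[\int_{|y-x|>\eps}(u(y)-u(x))\nu(x,y)\dy\bigr]\dx$, which converges pointwise on $\operatorname{supp}\phi$ to $-\int_D \phi(x) Lu(x)\dx = 0$ as $\eps\to 0^+$. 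The limit interchange is justified by dominated convergence with the Cauchy-Schwarz majorant built from the finite quantities $\ED(u,u)$ and $\mathcal{E}_{\mR^d}(\phi,\phi)$; the $C^2$-regularity of $u$ near $\operatorname{supp}\phi$ is what controls the near-diagonal contribution. The extension to all $\phi\in\vd_0$ follows from the density of $C_c^\infty(D)$ in $\vd_0$ (Theorem~\ref{th:density}) and the bound $|\ED(u,\phi)| \leq \sqrt{\ED(u,u)}\,\|\phi\|_{\vd}$, which makes $\ED(u,\cdot)$ a continuous linear functional on $\vd_0$.

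For the uniqueness statement with $D$ bounded, suppose $u_1, u_2$ are two weak solutions and set $v = u_1 - u_2 \in \vd_0$. Testing the weak formulation with $\phi = v$ yields $\ED(v,v) = \ED(u_1,v) - \ED(u_2,v) = 0$, and since $v$ vanishes on $D^c$ this upper bound specialises to
\[ 0 = \ED(v,v) \geq \iint_{D\times D^c} v(x)^2 \nu(x,y)\dy\dx = \int_D v(x)^2 \kappa_D(x)\dx. \]
For bounded $D$ the complement $D^c$ is a neighbourhood of infinity, so the strict positivity of $\nu$ forces $\kappa_D > 0$ throughout $D$; consequently $v \equiv 0$ a.e., completing the argument. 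The main technical obstacle, in my opinion, is the passage in step (iii) from the bilinear form to the pointwise generator identity: because the integral defining $Lu(x)$ is only conditionally convergent, naive Fubini is not available, and the rigorous route requires the symmetric $\eps$-truncation combined with the $C^2$-regularity from Theorem~\ref{lem:harmc2}.
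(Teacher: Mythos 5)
Your argument for the existence part is essentially the paper's own: well-definedness and $u=g$ on $D^c$ from Lemma~\ref{lem:integrability}, membership $u\in\vd$ from Theorem~\ref{th:toz}, reduction to $\phi\in C_c^\infty(D)$ via Theorem~\ref{th:density} and the Cauchy--Schwarz continuity of $\ED(u,\cdot)$, and then the symmetric $\eps$-truncation collapsing $\ED^\eps(u,\phi)$ to $-\int\phi\,L_\eps u\,\dx$, with $L_\eps u\to Lu=0$ on $\operatorname{supp}\phi$ controlled by Taylor's formula and the $C^2$-regularity from Theorem~\ref{lem:harmc2} together with Lemmas~\ref{lem:harmonic} and~\ref{lem:harmlu}. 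One small ordering caveat: the ``collapsed'' identity $\mathcal{E}_{\mR^d}(u,\phi)=\iint(u(x)-u(y))\phi(x)\nu(x,y)\,\dx\dy$ is only formal before truncation, since the non-symmetrized integrand need not be absolutely integrable near the diagonal; the truncation must come first and the splitting be justified for $\nu_\eps$ (as the paper does by Cauchy--Schwarz and the finiteness of the measure $\phi(x)\nu_\eps(x,y)\dx\dy$), which your write-up effectively does but states in the wrong order. The genuine difference is uniqueness: the paper simply cites an external result (Theorem 4.2 of the Rutkowski reference), whereas you give a direct, self-contained argument: for two weak solutions, $v=u_1-u_2\in\vd_0$, testing with $\phi=v$ gives $\ED(v,v)=0$, and since $v=0$ a.e.\ on $D^c$, $\ED(v,v)\ge\int_D v(x)^2\kappa_D(x)\dx$ with $\kappa_D>0$ by strict positivity of $\nu$, forcing $v=0$ a.e. This is correct and arguably preferable because it keeps the paper self-contained (and in fact uses boundedness of $D$ only through $|D^c|>0$, so it is slightly more general than the quoted statement), while the paper's citation buys brevity and uniformity with its treatment of the non-homogeneous problem.
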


\begin{proof}
	The uniqueness for bounded $D$ follows from \cite[Theorem 4.2]{MR3738190}, therefore it is enough to show that $P_D[g]$ is a weak solution.
	
If $g\in\xd,$ then $u=P_D[g]$ is well-defined, $u=g$ on $D^c$ and by
Theorem \ref{th:toz},
$u\in \vd.$
By Theorem \ref{th:density}
we only need to verify \eqref{eq:ws-a} for
$\phi\in C_c^{\infty}(D)$.
Let
$\epsilon>0$, $\nu_\epsilon(x,y)=\nu(x,y)\mathbf 1_{|x-y|>\epsilon}$ and
  \begin{eqnarray*}2\ED^\epsilon (u,\phi)&=&\iint\limits_{(\mathbb R^d\times\mathbb R^d\setminus D^c\times D^c) \cap \{|x-y|>\epsilon\}} (u(x)-u(y))(\phi(x)-\phi(y))\nu(x,y)\,\dx \dy\\
  &=& \iint\limits_{\mathbb R^d\times\mathbb R^d\setminus D^c\times D^c} (u(x)-u(y))(\phi(x)-\phi(y))\nu_\epsilon(x,y)\,\dx \dy.
  \end{eqnarray*}
  Since $u,\phi\in \vd,$ the integral $\ED(u,\phi)$ is absolutely convergent and $\ED^\epsilon(u,\phi)\to\ED (u,\phi)$ as $\epsilon\to 0$.
We claim that $\ED^\epsilon (u, \phi)\to 0$ when $\epsilon \to 0.$
Indeed, the integral
\[\iint\limits_{\mathcal A} (u(x)-u(y))(\phi(x)-\phi(y))\nu(x,y)\,\dx\dy\]
 is absolutely convergent for every set $\mathcal A\subset \mathbb R^d\times\mathbb R^d\setminus  D^c\times  D^c$, hence
\begin{eqnarray*}
  2\ED^\epsilon(u,\phi) &=& \int_ D\int_{ D}(u(x)-u(y))( \phi(x)- \phi(y))\nu_\epsilon(x,y)\,\dx \dy\\
  &&+\int_ D\int_{ D^c}(u(x)-u(y))(\phi(x)-\phi(y))\nu_\epsilon(x,y)\,\dx \dy\\
  &&+ \int_{ D^c}\int_{  D}(u(x)-u(y))( \phi(x)- \phi(y))\nu_\epsilon(x,y)\,\dx \dy =: {\rm I}+{\rm I\!I}+{\rm I\!I\!I}.
  \end{eqnarray*}
By the symmetry of $\nu_\epsilon$  and the fact that $\phi\equiv 0$ on $D^c,$ we readily see that ${\rm I\!I}={\rm I\!I\!I}=\iint\limits_{D^c\times D} (u(x)-u(y))\phi(x)\nu_\epsilon(x,y)\,\dx\dy.$
 Also,
 ${\rm I} = 2 \iint\limits_{D\times D} ( u(x) - u(y) )\phi(x)\nu_\epsilon(x,y)\,\dx \dy$, which converges absolutely
  by the Cauchy--Schwarz inequality and the fact that $\phi(x)\nu_\epsilon(x,y)\,\dx\dy$ is a finite measure.
 Thus,
 \begin{eqnarray*}
 2\ED^\epsilon(u,\phi) &= &2\int_{\mathbb R^d}\int_ D (u(x)-u(y))\phi(x)\nu_\epsilon(x,y)\,\dx\dy\\
 &=&2\int_ D \phi(x)\left(\int_{\mathbb R^d} (u(x)-u(y))\nu_\epsilon(x,y)\,\dy\right)\,\dx \\
 &=& - 2\int_ D \phi(x) L_\epsilon u(x)\,\dx= -2\int_{{\rm \small supp }\, \phi} \phi(x)L_\epsilon u(x)\,\dx,
 \end{eqnarray*}
 where
 \[L_\epsilon u(x) := \int_{\mathbb R^d}(u(y)-u(x))\nu_\epsilon(x,y)\,\dy.\]
 The function $u$ is regular $L$-harmonic (see Definition \ref{def:harm}). By Theorem~\ref{lem:harmc2}, $u\in C^2(D)$ and by Lemma \ref{lem:harmlu}, $Lu(x)=0$ for $x\in D$.
 In particular, $L_\epsilon u(x)\to Lu(x)=0$ for $x\in D.$
 We will prove  that the convergence is uniform on the support of~$\phi.$
For $x\in D$, $0<\eta<\epsilon$,
 \begin{equation}\label{eq:difference}
 L_\eta u(x)-L_\epsilon u(x)=
 \int_{\eta<|x-y|\leq \epsilon}(u(y)-u(x))\nu(x,y)\,\dy.
 \end{equation}
Let $\delta=\mbox{dist}(\mbox{supp}\,\phi,  D^c)>0$ and let $\epsilon<\delta/2$.  If $x\in\mbox{supp}\,\phi,$
then points $y$ appearing in \eqref{eq:difference} belong to the compact set $K:=\mbox{supp}\,\phi+\overline{B}_{\delta / 2}\subset  D$.
Since $u\in C^2(D)$,
$$
C_K:=\sup_{x\in K, 1\leq i,j\leq d} (|u(x)|,\left|{\partial^i u(x)}\right|,\left|\partial^{ij} u(x)\right|)<\infty.$$
By the symmetry $\nu(x,y)=\nu(y,x)$,
\begin{align*}
 &L_\eta u(x)-L_\epsilon u(x)=
 \int_{\eta<|x-y|\leq \epsilon}(u(y)-u(x))\nu(x,y)\,\dy\\
 &=
 \int_{\eta<|x-y|\leq \epsilon}(u(y)-u(x) - \nabla u(x)\cdot (y-x))\nu(x,y)\,\dy .
\end{align*}
From Taylor's formula we obtain
\begin{align*}
|Lu(x)-L_\epsilon u(x)|&\leq \frac{C_K}{2} \lim\limits_{\eta\to 0} \int_{\eta<|x-y|\leq \epsilon} |y-x|^2 \nu(x,y)\,\dy \\
&\leq  \frac{C_K}{2}  \int_{|x-y|\leq \epsilon} |y-x|^2 \nu(x,y)\,\dy
= C_K C_\eps, \quad x\in {\rm\small supp}\, \phi,
 \end{align*}
 and $C_\eps \to 0$ as $\eps \to 0$, so
 $\int
 \phi(x)L_\epsilon u(x)\,\dx \to \int_{{\rm\small supp}\,\phi}
 \phi(x)Lu(x)\,\dx=0$.
Our claim follows: $\ED^\epsilon(u,\phi)\to -2\int_{{\rm\small supp}\,\phi} \phi(x)Lu(x)\,\dx=0$. Therefore $\ED (u,\phi)=0$, as needed.
\end{proof}

\subsection{Equivalence of definitions of harmonicity} The definitions of harmonicity can be unified. We say that $\tilde u$ is a modification of $u$ if $\tilde u=u$ $a.s.$
\begin{theorem}\label{th:equivalence}
	Let $u\in \mathcal{V}^D$ and let $D$ be bounded with continuous boundary. Under the assumptions of Theorem \ref{th:toz} the following statements are equivalent:
	\begin{enumerate}
		\item[(i)] $\int_{\mR^d} u L\phi = 0$ for every $\phi \in C_c^\infty(D)$ (distributionally harmonic),
		\item[(ii)] $\ED(u,\phi) = 0$ for every $\phi \in \mathcal{V}^D_0$ (weakly harmonic),
		\item[(iii)] $u$ has a modification that is $L$-harmonic,
		\item[(iv)] $u$ has a modification that is regular $L$-harmonic, and $u=P_D[u]$ \textit{a.e}.
	\end{enumerate}
Furthermore, any of the statements above yields $Lu(x) = 0$ in $D$.
\end{theorem}
\begin{proof}
First, (iv) implies (iii) by Lemma \ref{lem:harmonic}. Then we prove that (iii) implies (ii). Indeed, by Theorem \ref{th:var} used for Lipschitz open $U\subset\subset D$ we get $\mathcal{E}_U(u,\phi) = 0$ for every $\phi \in C_c^\infty(U)$. By the dominated convergence theorem and the fact that $u\in \mathcal{V}^D$, we have $\mathcal{E}_D(u,\phi) = 0$. Since for every $\phi\in C_c^\infty(D)$ there exists $U\subset \subset D$ containing the support of $\phi$, we get that $\mathcal{E}_D(u,\phi) = 0$ for every $\phi\in C_c^\infty(D)$. Then we use the density of smooth functions in $\mathcal V^D_0,$ see Theorem \ref{th:density}. Then (ii) implies (i) by Lemma \ref{lem:weakdistr}. Finally, (i) implies (iv). Indeed, by \cite[Theorem 1.1]{GKL} $u$ is harmonic and thus weakly harmonic. By the trace theorem ${\rm Tr}\, u \in \xd$, and by the extension theorem $P_D[u]\in \mathcal{V}^D$. By Theorem \ref{th:var}, $P_D[u]$ is the unique weakly harmonic function equal to $u$ \textit{a.e.} on $D^c$. Hence $u = P_D[u]$ \textit{a.e.} on $\mR^d$.
The statement	$Lu=0$ follows from Lemma \ref{lem:harmlu}.
\end{proof}

Theorem \ref{lem:harmc2} allows for the following extension which may be regarded as
a counterpart of the Weyl's lemma for nonlocal operators.
	\begin{theorem}
		Assume that for every $r_0> 0$ there exists $C(r_0)$ such that $|\nu^{(k)}(r)| \lesssim C(r_0)\nu(r)$ for $r>r_0$ and $k=1,\ldots, n$. If $u\in L^1(1\wedge \nu)$ is distributionally harmonic in $D$, then $u\in C^n(D)$.
	\end{theorem}
\begin{proof}
Adapt the proof of Theorem \ref{lem:harmc2}, starting from \eqref{eq:poissexp}.
\end{proof}

{
\section{Estimates of the interaction kernel}
\label{sec:ikg}
In this section we prove sharp estimates of $\gamma_D$
for
the half-space and for bounded $C^{1,1}$ open sets.
In the proof of the result for the half-space we often use the following global scalings.
\begin{description}
	\item[A4]There exist constants $\alpha,\beta\in (0,2)$ and $c,C>0$ such that
	\begin{equation}\label{eq:a2prim}
	\nu(\lambda r)\leq C \lambda^{-d-\beta}\nu(r) ,\qquad 0<\lambda\leq 1,\, r>0.
	\end{equation}
	\begin{equation}\label{eq:scalNua}\nu(\lambda r) \geq c\lambda^{-d-\alpha}\nu(r),\quad\quad\, 0<\lambda\leq 1,\, r>0.
	\end{equation}
\end{description}
Note that \eqref{eq:a2prim} is but a global version of \eqref{eq:nuSc},
equivalent to $r^{d+\beta}\nu(r)$ being almost increasing on $(0,\infty)$: $p^{d+\beta}\nu(p) \leq C r^{d+\beta}\nu(r)$ if $0<p<r<\infty$, cf. \cite[Section~3]{MR3165234}.
Clearly, {\bf A4} holds true if $L=\Delta^{\alpha/2}$.

We start with some basic observations.
If \eqref{eq:a2prim} holds, then by integrating \eqref{eq:Kdef} in polar coordinates and by changing variables (cf. \eqref{e:cKnu}) we get
\begin{equation}\label{eq:compar}
\nu(s) \approx \frac{K(s)}{s^d},\quad s>0.
\end{equation}
For $a\in(0,2]$ we denote
$$U_a(s)= \frac{K(s)}{(h(s))^as^d}, \quad s>0.$$
Due to \cite[Theorem 1.2]{MR3729529}, unimodality of $\nu$ and
\eqref{eq:compar}, $U_a$ is almost decreasing,  i.e., there is a constant $c_a>0$ such that for all $0<s_1<s_2$ we have $U_a(s_1)\geq c_a U_a(s_2)$, in short $U_a(s_1)\apprge U_a(s_2)$.
It is known \cite[(3.5)]{MR3350043} that $h'(r)=-2K(r)/r$. In particular, $h$ is decreasing and  $V$ is increasing, see \eqref{eq:Vdef}. A~direct calculation gives
\begin{equation}\label{eq:Vprim}
-\Big(\frac{1}{V(s)}\Big)' = \frac{V(s)K(s)}{s} \approx V(s)\nu(s)s^{d-1},\qquad [V^2]'(s) = 2s^{d-1}U_2(s).
\end{equation}
The
factor $s^{d-1}$
will be useful for integrations in polar coordinates.
It is also easy to verify that
$s^2h(s)$ is nondecreasing, hence $V(s)/s$ is nonincreasing and for every $\lambda\in(0,1)$ we have
\begin{equation}\label{eq:VV}
V(s)\geq V(\lambda s)\geq \lambda V(s),\quad s>0.
\end{equation}
Here is our main result
for the half-space
$$H=\{x\in\mathbb R^d: x_d>0\}.$$
\begin{theorem}\label{prop:gammaHalfspace}
	Let $d\geq 3$ and assume that \eqref{eq:a2prim} holds true. Then,
	$$\gamma_{H}(z,w)\leq C \frac{V^2(r(z,w))\nu(r(z,w))}{V(\delta_H(z))V(\delta_H(w))}.$$
	If we additionally assume \eqref{eq:scalNua},
	then
	$$\gamma_{H}(z,w)\approx \frac{V^2(r(z,w))\nu(r(z,w))}{V(\delta_H(z))V(\delta_H(w))}.$$
	Here $r(z,w) = \delta_H(z) + \delta_H(w) + |z-w|$ and $\delta_H(z) = \dist (z,\partial H)$.
\end{theorem}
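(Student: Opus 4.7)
The plan is to start from the representation
$$\gamma_H(w,z)=\int_H \nu(w,x)\,P_H(x,z)\,\dx$$
and combine it with sharp two-sided estimates for the half-space Poisson kernel $P_H$. Under the global weak scaling \textbf{A4}, such bounds are available by passing from the corresponding ball estimates in Grzywny--Kwa\'snicki \cite{arXiv161110304GK} (through the boundary Harnack inequality and the explicit Green function bounds for balls). Crucially, these bounds factorize multiplicatively into a \emph{boundary-decay} factor of the form $V(\delta_H(x))/V(\delta_H(z))$ and an \emph{interior} factor built from $V$ and $\nu$ depending on $|x-z|$; this separation is what makes the final formula in the theorem clean.

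For the upper bound I would substitute the Poisson kernel estimate into the integral and split $H$ into three regions according to the position of $x$ relative to $w$ and $z$: a region near $w$ where $|x-w|\le r(w,z)/4$, a region near $z$ where $|x-z|\le r(w,z)/4$, and the complementary bulk region. In the bulk both $\nu(w,x)$ and $\nu(|x-z|)$ are comparable to $\nu(r(w,z))$, and the resulting $x$-integral of $V(\delta_H(x))$ is controlled via the identity
$$-\Big(\tfrac{1}{V(s)}\Big)'=\frac{V(s)K(s)}{s}\approx V(s)\nu(s)s^{d-1}$$
from \eqref{eq:Vprim}, which, after integration in spherical coordinates over slabs parallel to $\partial H$, produces exactly the factor $V^{2}(r(w,z))$. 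The two near-regions contain integrable singularities whose contributions are handled by a direct scaling argument using \textbf{A4} (and $d\ge 3$ for convergence at infinity) and are absorbed by the bulk estimate.

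For the matching lower bound I would restrict the integration to a carefully chosen subregion of the bulk: the set of $x\in H$ satisfying $|x-w|\vee|x-z|\lesssim r(w,z)$ and $\delta_H(x)\approx V^{-1}(c\,r(w,z))$ for a suitably small absolute constant $c$. On such a set both $\nu(w,x)$ and $P_H(x,z)$ are comparable to their bulk values, and the volume-density condition for $H^c$ (trivial for the half-space) together with the almost-monotonicity of $V$ and of the functions $U_a$ ensures that its Lebesgue measure is comparable to $r(w,z)^{d}$. Multiplying the integrands and integrating yields the lower bound $V^{2}(r)\nu(r)/[V(\delta_H(w))V(\delta_H(z))]$.

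The main obstacle is the preparatory step: securing, under only the global scaling \textbf{A4}, the sharp multiplicative form of the half-space Poisson kernel estimate that separates the boundary-decay factors from the interior factor. Once this is in hand, the region decomposition and the differential identity \eqref{eq:Vprim} reduce the remaining work to standard one-dimensional integrations that are uniformly controlled by \textbf{A4}. The hypothesis $d\ge 3$ is used precisely to ensure absolute convergence of the relevant integrals over the unbounded directions of $H$; in low dimensions additional recurrence-type phenomena would have to be addressed separately.
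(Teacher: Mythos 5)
There is a genuine gap, in fact two. First, the sharp two\--sided half\--space Poisson kernel bound $P_H(x,z)\approx \frac{V(\delta_H(x))}{V(\delta_H(z))}V^2(|x-z|)\nu(|x-z|)$, which you defer as a ``preparatory step,'' is roughly half of the actual work (Lemma~\ref{lem:PoissonEst}); it is not a routine transfer from ball estimates via the boundary Harnack inequality, but is derived from the half\--space Green function bounds of \cite{arXiv161110304GK} through the Ikeda--Watanabe formula together with the integral identities $\int_r^\infty K(s)h^{-1/2}(s)s^{-1}\,{\rm d}s=h^{1/2}(r)$ and $\int_0^r K(s)h^{-3/2}(s)s^{-1}\,{\rm d}s=h^{-1/2}(r)$, and its upper half only needs \eqref{eq:a2prim}. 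Leaving it unproved leaves the theorem unproved.

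Second, and more seriously, your region decomposition does not yield the stated bounds, because it loses the boundary singularity in $w$. In the unbounded ``bulk'' it is false that $\nu(w,x)$ and $\nu(x,z)$ are comparable to $\nu(r(z,w))$; and if you only use $\nu(w,x)\lesssim\nu(r)$ there, the remaining integral $\int_H V(\delta_H(x))V^2(|x-z|)\nu(|x-z|)\dx$ diverges at infinity (for the stable case the dyadic shell at radius $R$ contributes $\approx R^{\alpha/2}$), so convergence requires retaining the decay of \emph{both} kernels, not just $d\ge 3$. Moreover the region near $w$ is not ``absorbed by the bulk'': using \eqref{eq:Vprim}, $\int_{\{x\in H,\ |x-w|\le r/4\}}\nu(w,x)V(\delta_H(x))\dx\approx 1/V(\delta_H(w))$, so this region alone contributes $\approx V^2(r)\nu(r)/\bigl[V(\delta_H(z))V(\delta_H(w))\bigr]$ and is the sole source of the factor $1/V(\delta_H(w))$, whereas the bulk gives only about $V(r)\nu(r)/V(\delta_H(z))$. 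For the same reason your lower bound fails: on a bulk set of measure $\approx r^d$ with $\nu(w,x)\approx\nu(r)$ and $\delta_H(x)\approx r$, the scaling \eqref{eq:scalNua} (via $\nu(r)r^dV^2(r)\approx1$) gives only $\gamma_H(z,w)\gtrsim V(r)\nu(r)/V(\delta_H(z))$, which misses the claimed estimate by the unbounded factor $V(r)/V(\delta_H(w))$; a matching bound forces you to integrate near the boundary point closest to $w$. The paper avoids this bookkeeping altogether by the reflection trick: with $\tilde z$ the mirror image of $z$, one shows $\gamma_H(z,w)\approx P_H(\tilde z,w)/V^2(\delta_H(\tilde z))$, the upper inequality holding under \eqref{eq:a2prim} alone and the substitution of $\tilde z$ for $z$ near $\tilde z$ (hence the two\--sided bound) using \eqref{eq:scalNua} and $[V^2]'(s)\approx V^4(s)\nu(s)s^{d-1}$; you would need either to adopt such a device or to carry out the near-$w$ analysis you currently discard.
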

The proof of Theorem~\ref{prop:gammaHalfspace} (given below) uses the following lemma.
\begin{lemma}\label{lem:PoissonEst}
	Let $d\geq 3.$ Assume that \eqref{eq:a2prim} holds true.
	 Then,
	$$P_H(x,z)\approx \frac{V(\delta_H(x))}{V(\delta_H(z))}V^2(|x-z|)\nu(|x-z|),\quad x\in H,\,\, z\in \overline{H}^c.$$
\end{lemma}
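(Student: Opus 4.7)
The starting point is the Ikeda--Watanabe representation
\[
P_H(x,z)=\int_H G_H(x,y)\,\nu(y-z)\,\dy,
\]
combined with sharp two-sided estimates for the half-space Green function. Under $d\geq 3$ and the global upper scaling \eqref{eq:a2prim}, such estimates are available in the form
\[
G_H(x,y)\approx \Bigl(1\wedge\tfrac{V(\delta_H(x))}{V(|x-y|)}\Bigr)\Bigl(1\wedge\tfrac{V(\delta_H(y))}{V(|x-y|)}\Bigr)\frac{V^2(|x-y|)}{|x-y|^d},
\]
with the matching lower bound also using \eqref{eq:scalNua}. I would cite these from the half-space Green-function literature (e.g.\ the Kim--Song--Vondra\v{c}ek line of work, or Grzywny--Kwa\'snicki) or sketch them from the scaling framework of the paper. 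Setting $R=|x-z|$ and noting $R\geq\delta_H(x)+\delta_H(z)$ because $z\in\overline H^c$, the proof is organized by splitting $H$ into $A_1=\{|y-x|\leq R/2\}$, $A_2=\{|y-z|\leq R/2\}\cap H$, and the leftover $A_3$ on which $|x-y|,|y-z|,R$ are all comparable.

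For the \emph{upper bound}: on $A_3$ unimodality gives $\nu(y-z)\approx\nu(R)$, and the Green-function factor integrated against $\dy$ via $[V^2]'(s)=2s^{d-1}U_2(s)$ from \eqref{eq:Vprim} produces a term of the required form after folding in $V(R)\gtrsim V(\delta_H(z))$, which follows from monotonicity of $V$ and $\delta_H(z)\leq R$. On $A_1$ one has $|y-z|\approx R$, so $\nu(y-z)\lesssim\nu(R)$ by monotonicity, and the Green-function contribution is again handled by spherical integration and \eqref{eq:Vprim}. On $A_2$ one has $|x-y|\approx R$, so the Green function collapses to order $V(\delta_H(x))V(\delta_H(y))/R^d$ and the task reduces to showing
\[
\int_{A_2}V(\delta_H(y))\,\nu(y-z)\,\dy \;\lesssim\; \frac{R^{d}\,\nu(R)\,V(R)}{V(\delta_H(z))},
\]
which I would prove by a dyadic decomposition in $|y-z|$ combined with the scaling \eqref{eq:a2prim}.

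For the \emph{lower bound}: let $z^*$ be the projection of $z$ on $\partial H$ and restrict the Ikeda--Watanabe integral to the interior tangent ball $B^\circ=B(z^*+2\delta_H(z)e_d,\delta_H(z))\subset H$. On $B^\circ$ one has $\delta_H(y)\approx\delta_H(z)$, $|y-z|\approx\delta_H(z)$, $|x-y|\approx R$, so the Green-function lower bound gives $G_H(x,y)\gtrsim V(\delta_H(x))V(\delta_H(z))/R^d$, while \eqref{eq:scalNua} and unimodality give $\nu(y-z)\gtrsim\nu(\delta_H(z))$. Integrating over $B^\circ$ (of volume $\approx\delta_H(z)^d$) and using $\delta_H(z)^d\,\nu(\delta_H(z))\approx K(\delta_H(z))\approx 1/(V^2(\delta_H(z))h(\delta_H(z)))$, together with the two-sided scaling of $\nu$ to compare $V^2(R)\nu(R)$ with $V^2(\delta_H(z))\nu(\delta_H(z))$, recovers the claimed lower bound.

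The \emph{main obstacle} is the $A_2$ estimate in the upper bound: producing the factor $1/V(\delta_H(z))$ from $\int V(\delta_H(y))\,\nu(y-z)\,\dy$ with the correct constants. The cleanest route is a Fubini reduction in the $y_d$-coordinate together with the integration-by-parts identity $-(1/V)'(s)\approx V(s)\,\nu(s)\,s^{d-1}$ from \eqref{eq:Vprim}; checking that this works under only the one-sided scaling \eqref{eq:a2prim} is the delicate point, as is recording/proving the half-space Green-function bounds at that level of generality.
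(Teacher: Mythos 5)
Your skeleton (Ikeda--Watanabe plus sharp half-space Green function bounds, then a split of $H$ into a region near $x$, a region near $z$, and the rest, closed by the integral identities coming from $h'(r)=-2K(r)/r$) is the same as the paper's, but there is a genuine gap in the inputs: your argument only works under the two-sided scaling \textbf{A4}, while the lemma asserts the two-sided bound under \eqref{eq:a2prim} alone, and this stronger form is exactly what is used later (the upper bound in Theorem~\ref{prop:gammaHalfspace} requires the \emph{comparability} of $P_H$ assuming only \eqref{eq:a2prim}). The source of the trouble is that under \eqref{eq:a2prim} alone one only gets $K(r)\approx r^d\nu(r)$, not $K(r)\approx h(r)$: the tail $\nu(B_r^c)$ may dominate $K(r)$. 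Consequently your proposed Green function estimate with bulk factor $V^2(|x-y|)/|x-y|^d$ is not the correct two-sided bound in this generality; the valid estimate (used in the paper, \eqref{eq:green-est}) has bulk $U_2(|x-y|)=K(|x-y|)V^4(|x-y|)/|x-y|^d$, which is smaller by the factor $K/h$. This is not a cosmetic difference: with your (lossy) upper bound, the near-$x$ region $A_1$ produces $\nu(R)\int_0^{R}\frac{\ds}{s\,h(s)}$, which under \eqref{eq:a2prim} alone need not be comparable to $\nu(R)V^2(R)$ and can even diverge (e.g.\ when $h$ is slowly varying at $0$), so the upper bound does not close. The paper avoids this by keeping the $K(s)$ factor and using the exact identities $\int_0^r\frac{K(s)}{s\,h^{3/2}(s)}\ds=h(r)^{-1/2}$ and $\int_r^\infty\frac{K(s)}{s\,h^{1/2}(s)}\ds=h(r)^{1/2}$, which hold with no scaling at all.

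The same issue breaks your lower bound as a proof of the stated lemma. Restricting to a single ball near $z$ yields a contribution $\approx \frac{V(\delta_H(x))V(\delta_H(z))}{R^d}\,\nu(\delta_H(z))\,\delta_H(z)^d$, and matching this with $\frac{V(\delta_H(x))}{V(\delta_H(z))}V^2(R)\nu(R)$ requires $s^d\nu(s)V^2(s)\approx 1$ at both scales, i.e.\ \eqref{eq:scalNua} (your displayed identity ``$K(\delta)\approx 1/(V^2(\delta)h(\delta))$'' is garbled, since $V^2h\equiv1$; what you actually need is $K\approx h$, which is the lower-scaling statement). You acknowledge invoking \eqref{eq:scalNua} both for the Green lower bound and for this comparison, but then you have proved the lemma only under \textbf{A4}, a strictly weaker result. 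The paper's lower bound instead integrates over a cone near $z$ up to radius $2R$ \emph{and} over a region of size $R$ near $x$; the two contributions give, respectively, $\frac{1}{V(3\delta_H(z)/2)}-\frac{1}{V(2R)}$ and (after the factor $\nu(3R)/V(3R)\apprge U_1(R)/V^3(2R)$, which uses only \eqref{eq:a2prim}) $\frac{1}{V(2R)}$, and the sum telescopes to $\frac{1}{V(\delta_H(z))}$ — this cancellation is what makes the lower bound work without any lower scaling, and it is the idea missing from your plan. (Minor: your $A_2$ target should read $R^d\nu(R)V^2(R)/V(\delta_H(z))$, and the needed bound $\int_{\{|y-z|\ge\delta_H(z)\}}V(|y-z|)\nu(y-z)\dy\lesssim 1/V(\delta_H(z))$ is exactly the first identity above, so no dyadic decomposition is required.)
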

\begin{proof}
	 By \cite[Theorem 1.13]{MR3729529},
	\begin{equation}\label{eq:green-est}
G_H(x,y)\approx \frac{V(\delta_H(x))}{V(\delta_H(x)+|x-y|)}\frac{V(\delta_H(y))}{V(\delta_H(y)+|x-y|)}U_2(|x-y|),\quad x,y\in H.
\end{equation}
From \eqref{eq:a2prim}, the Ikeda--Watanabe formula \eqref{eq:ikedawatanabe} and the monotonicity properties of $V, U_a, \nu,$
	\begin{align*}
	\frac{P_H(x,z)}{V(\delta_H(x))}\lesssim \ &\int_{H\cap\{|x-z|\leq 2 |x-y|\}}\frac{V(\delta_H(y))}{V^2(|x-z|/2)}U_2(|x-z|/2)\nu(z,y)\,\dy \\
	&+\int_{|x-z|> 2 |x-y|}\frac{1}{V(|x-y|)}U_2(|x-y|)\nu((x-z)/2)\,\dy\\
	\leq\ &U_1(|x-z|/2)\int_{\overline{B}^c(z,\delta_H(z))}V(|y-z|)\nu(z,y)\,\dy\\
	&+ \nu(x,z)\int_{B_{|x-z|/2}}U_{3/2}(|y|)\,\dy\\ \leq \
& U_1(|x-z|/2)\int_{\overline{B}_{\delta_H(z)}^c} V(|y|)\nu(|y|)\dy  + \nu(x,z)\int_{B_{|x-z|/2}}U_{3/2}(|y|)\dy\\
	\lesssim &\frac{U_1(|x-z|)}{V(\delta_H(z))}+U_{1/2}(|x-z|) = 2\frac{U_1(|x-z|)}{V(\delta_H(z))}.
	\end{align*}
	In the last inequality we use (\ref{eq:compar}) and the formula $h'(r)=-2K(r)/r,$ which result in
\begin{eqnarray*}
\int_r^\infty\frac{K(s)}{h^{1/2}(s)s}\,\ds=h^{1/2}(r)
	&\mbox{and} &\int^r_0\frac{K(s)}{sh^{3/2}(s)}\,\ds=\frac{1}{h^{1/2}(r)}.
\end{eqnarray*}
We next prove a matching  lower estimate.
Using repeatedly the monotonicity properties of $U_a, V,$ the inequality \eqref{eq:VV} and the scaling of $\nu$ we see that
up to a multiplicative constant, $P_H(x,z)/V(\delta_H(x))$ is not less than
	\begin{eqnarray*}
	&&\int_{H\cap\{|y-z|\leq 2|x-z|\}}\frac{V(\delta_H(y))}{V^2(5|x-z|)}U_2(3|x-z|)\nu(z,y)\,\dy \\&&+ \int_{H\cap\{|y-x|\leq 2|x-z|\}}\frac{V(\delta_H(y))}{V(\delta_H(y)+|x-y|)V(3|x-z|)}
U_2(|x-y|)\nu(3|x-z|)\,\dy\\
	&\gtrsim& U_1(5|x-z|){\rm I} + \frac{\nu(|x-z|)}{V(3|x-z|)}{\rm I\!I}\ \gtrsim
 \ U_1(|x-z|)\left({\rm I}+\frac{1}{V^3(2|x-z|)}{\rm I\!I} \right),
\end{eqnarray*}
where
	\begin{align*}
	{\rm I}=&\int_{H\cap \{|y-z|\leq 2|x-z|\}}V(\delta_H(y))\nu(z,y)\,\dy,\\
	{\rm I\!I}=&\int_{H\cap \{|y-x|\leq 2|x-z|\}}\frac{V(\delta_H(y))}{V(\delta_H(y)+|x-y|)}U_2(|x-y|)\,\dy.\\
	\end{align*}
First we estimate the integral ${\rm I}$. Without loss of generality we may and do assume that $z=(0,\ldots,0,z_d)$ with $z_d<0$. Let $\Gamma=\{(\tilde{y},y_d):|\tilde{y}|<y_d\}$. Then, for $y\in \Gamma$, we have $2\delta_H(y)\geq |y-z|-\delta_H(z)$.  Hence, by the rotational invariance of $\nu$  and \eqref{eq:VV} we obtain
	\begin{align*}{\rm I \ \ }&\geq \int_{\Gamma\cap \{|y-z|\leq 2|x-z|\}} V((|y-z|-\delta_H(z))/2)\nu(|y-z|)\,\dy\\&\geq c(d) \int_{3\delta_H(z)/2 \leq |y-z|\leq 2|x-z|}V(|y-z|-\delta_H(z))\nu(y,z)\,\dy\\
	&\gtrsim \int^{2|x-z|}_{3\delta_H(z)/2}V({s})\nu(s)s^{d-1}\ds\approx \frac{1}{V({3\delta_H(z)/2})}-\frac{1}{V(2|x-z|)}.
	\end{align*}
Similarly,
	\begin{align*}{\rm I\!I\ \ }\ &\geq   \int_{|y-x|\leq 2(|x-z|\wedge y_d)}\frac{V(\delta_H(y))}{V(3\delta_H(y))}U_2(|x-y|)\,\dy\\
	&{\gtrsim}\int_{|y-x|\leq 2|x-z|}U_2(|x-y|)\,\dy\approx  V^2(2|x-z|),
	\end{align*}
	where in the second inequality we use the isotropy of $U_2$ and the inclusion
	$$\{y:|y-x|\leq 2y_d\}\supset\{y:|y-x|\leq 2(y_d-x_d)_+\}\supset x+\Gamma.$$
	Hence, up to a multiplicative constant, $P_H(x,z)/V(\delta_H(x))$ is not less than
\begin{eqnarray*}
&& U_1(|x-z|)\left(\frac{1}{V(3\delta_H(z)/2)}-\frac{1}{V(2|x-z|)}+\frac{1}{V(2|x-z|)}\right)
\geq \frac{U_1(|x-z|)}{V(\delta_H(z))} .
\end{eqnarray*}
	Since $U_1(s)\approx \nu(s) V^2(s),$ the proof is complete.
\end{proof}
\begin{proof}[Proof of Theorem~\ref{prop:gammaHalfspace}]
	We have
	$$\gamma_H(z,w)\approx \frac{1}{V(\delta_H(z))}\int_{H}V(\delta_H(x))V^2(|x-z|)
\nu(|z-x|)\nu(|w-x|)\,\dx.$$
	Let $\tilde{z}\in H$ be the reflection of $z$ in the hyperplane $\{x_d=0\}$.
	Then $|w-\tilde{z}|\approx r(z,w)$ and for $x\in H$ we have $|x-\tilde{z}|<|x-z|,$ and $\delta_H(\tilde{z}),\delta_H(x)\leq |x-z|.$ Consequently, the estimates of the Green function \eqref{eq:green-est} and Lemma \ref{lem:PoissonEst} imply
	 \begin{eqnarray}\label{eq:deltaz_0}\gamma_H(z,w)V^2(\delta_H(\tilde{z}))&\approx& \int_H \frac{V(\delta_H(x))V(\delta_H(\tilde{z}))}
{V^2(\delta_H(\tilde{z})+|x-z|)}V^4(|x-z|)\nu(|z-x|)\nu(|w-x|)\,\dx\\
	&\lesssim& \ P_H(\tilde{z},w)\approx \frac{V(\delta_H(\tilde{z}))}{V(\delta_H(w))}V^2(|\tilde{z}-w|)
\nu(|\tilde{z}-w|).\label{eq:ineqpois}
	\end{eqnarray}
We next assume \eqref{eq:scalNua} and prove  the matching lower bound.  It suffices to replace $z$ with $\tilde z$ in the right-hand side of \eqref{eq:deltaz_0} because then we have approximation $\approx$ instead of inequality $\lesssim$ in \eqref{eq:ineqpois}. To this end we again use \eqref{eq:VV} and obtain
\begin{align}&\int_{B(\tilde{z},\delta_H(z)/2)}
\frac{V(\delta_H(x))V(\delta_H(\tilde{z}))}
{V^2(\delta_H(\tilde{z})+|x-z|)}V^4(|x-z|)\nu(|z-x|)\nu(|w-x|)\,
\dx\nonumber\\
	\approx & \ \ V^4(\delta_H(z))\nu(\delta_H(z))\nu(|w-\tilde{z}|)
\delta_H(z)^d.\label{eq:deltaz1}
\end{align}
	For the integrand with $\tilde z$ we  have \begin{align}&\int_{B(\tilde{z},\delta_H(z)/2)}\frac{V(\delta_H(x))V(\delta_H(\tilde{z}))}{V^2(\delta_H(\tilde{z})+|x-\tilde{z}|)}V^4(|x-\tilde{z}|)\nu(\tilde{z}-x)\nu(w,x)\,\dx\nonumber\\
	&\approx \nu(|w-\tilde{z}|)\int_{B_{\delta_H(z)/2}}V^4(|x|)\nu(|x|)\,\dx
	\approx V^2(\delta_H(z))\nu(r(z,w)).\label{eq:deltaz2}
	\end{align}
The last comparison follows from
$V^4(s)\nu(s)s^{d-1}\approx [V^2]'(s).$ Since \eqref{eq:scalNua} gives  $\nu(r)r^d V^2(r)\approx 1$, the right-hand sides of \eqref{eq:deltaz1} and \eqref{eq:deltaz2} are comparable.  We have $|x-\tilde{z}|\approx|x-z|$, for $x\in H$ such that $|x-\tilde{z}|\geq \delta_H(z)/2$. Therefore we can replace $z$ by $\tilde{z}$ in the integrand in \eqref{eq:deltaz_0}, and so
	$$\gamma_H(z,w)\approx \frac{P_H(\tilde{z},w)}{V^2(\delta_H(\tilde{z}))}\approx \frac{V^2(r(z,w))}{V(\delta_H(z))V(\delta_H(w))}\nu(r(z,w)).$$
	
\end{proof}

\medskip
The result for the bounded $C^{1,1}$ open sets has a similar proof, so we will be brief.
\begin{proof}[Proof of Theorem \ref{th:gamma-est}] Let $D$ be $C^{1,1}$ at scale $R >0$.
\\ {\rm (i)}
First we let $\delta_D(z),\delta_D(w)\geq  R$.  Since $$\int_DG_D(x,y)\,\dy=\mE^x \tau_D,$$ by the radial monotonicity of $\nu$ we get
	$$ \nu(\delta_D(w)+\diam(D))\mE^x \tau_D \leq P_D(x,w)\leq \nu(\delta_D(w))\mE^x \tau_D.$$
	By \eqref{es:nuSc1},
	$$ \nu\left(\delta_D(w)+\diam(D)\right)\approx \nu(\delta_D(w)).$$
	These imply
	$$\gamma_D(z,w)\approx \nu(\delta_D(z))\nu(\delta_D(w))\int_{D}\mE^x\tau_D\, \dx,$$
which ends the proof in the first case.

\noindent {\rm (ii)}	We next assume that $\delta_D(z)\leq  R \leq \delta_D(w)$. We get
	$$\gamma_D(z,w)\approx \nu(\delta_D(w))\int_{D}\mE^x\tau_D \nu(z,x)\,\dx.$$
	Let $A=B(z,2\diam(D))\setminus \overline{B(z,\delta_D(z))}.$ By \cite[Theorem 4.6 and Proposition 5.2]{MR3350043},
	\begin{align*}\int_{D}\mE^x\tau_D \nu(z,x)\,\dx&\leq \int_A\mE^x\tau_A\nu(z,x)\,\dx\leq c_1 \int_A V(\delta_A(x))\nu(z,x)\,\dx\\ & \leq c_1\int_{\overline{B(0,\delta_D(z))}^c}V(|y|)\nu(y)\,\dy.
	\end{align*}
	Using \cite[Lemma 3.5]{MR3350043} we obtain
	$$\gamma_D(z,w)\leq c \nu(\delta_D(w))\frac{1}{V(\delta_D(z))}.$$
	Since $D$ is $C^{1,1}$, there is $x_0\in D$ such that $B=B(x_0,R)\subset D$.
	By \cite[Theorem 2.6]{MR3104097},
	\begin{align*}&\int_{D}\mE^x\tau_D \nu(z,x)\,\dx=  \int_{D}P_D(x,z) \,\dx\\
	&\geq c_2\int_{B(x_0,R/2)} \frac{V(\delta_D(x))}{V(\delta_D(z))}V^2(|x-z|)\nu(x,z)\, \dx\geq c_2 (R/\diam(D))^d\frac{1}{V(\delta_D(z))}.
	\end{align*}
Therefore,
	$$\gamma_D(z,w)\approx \nu(\delta_D(w))\frac{1}{V(\delta_D(z))}.$$

\noindent (iii) Finally, let
$\delta_D(z),\delta_D(w)<R.$
By \cite[Proposition 4.4 and  Theorem 4.5]{MR3249349}
the Dirichlet heat kernel of $D$ satisfies
\begin{equation*}
p_D(t,x,y)\approx \me^{-\lambda(D) t} \left(\frac{V(\delta_D(x))}{\sqrt{t}\wedge V(r)}\wedge 1\right)\left(\frac{V(\delta_D(x))}{\sqrt{t}\wedge V(r)}\wedge 1\right)p\left(t\wedge V^2(r),x,y\right),\quad t>0,\,x,y\in D,
\end{equation*}
where $\lambda(D)\approx 1/V^2(R)$.
Integrating against time we get rather standard estimates of the Green function, cf. \cite[proof of Theorem 7.3]{MR3237737}. For instance if $d\geq 2$, then
\begin{equation*}
G_D(x,y) \approx U_2(|x-y|)\bigg(\frac{V(\delta_D(x))V(\delta_D(y))}{V^2(|x-y|)}\wedge 1\bigg),\quad x,y\in D.
\end{equation*}
The Ikeda--Watanabe formula yields estimates for the Poisson kernel, cf. \cite[Theorem 2.6]{MR3104097},
\begin{equation}
P_D(x,z)\approx\frac{V(\delta_D(x))}{V(\delta_D(z))}\frac{1}{|x-z|^d},\quad x\in D,\, \delta_D(z)<R.
\end{equation}
By similar calculation as in the proof of Theorem~\ref{prop:gammaHalfspace} we obtain the estimate  in Theorem \ref{th:gamma-est}.
		\end{proof}

\section{Examples}\label{sec:examples}
In this section we provide examples of L\'evy measures other than \eqref{eq:lm} which satisfy {\rm \textbf{A1}} and {\rm \textbf{A2}}.

\begin{example}
{\rm
By inspection, {\rm \textbf{A1}} and {\rm \textbf{A2}}
are satisfied
when the L\'{e}vy density is
$$\nu(z)=\frac{1}{|z|^d\ln(2+|z|)},\quad z\in \Rd.$$
Due to mild singularity of $\nu$ at the origin the resulting operator $L$ may be considered
of "0-order".
}
\end{example}

For the rest of this section we consider
$\nu$ given by
 \begin{equation}\label{eq:SBM}\nu(r) = \int_0^\infty g_t(r) \,\eta(\dt),\quad r>0.
 \end{equation}
Here $g_t(r)=(4\pi)^{-d/2}\exp(-r^2/(4t))$,
and $\eta$ is the L\'evy measure of a nontrivial jump subordinator, i.e., $\eta$ is a
measure  on the real line such that
$\eta((-\infty,0]) = 0$ and
$0<\int_0^\infty (1\wedge t)\, \eta(\dt) < \infty.$ Note that $\nu$ is strictly positive and decreasing.

Let $$\varphi(\lambda)=\int_0^\infty(1-\me^{-\lambda t})\,\eta(\dt),\quad \lambda\geq 0.$$
The function is nonnegative and its derivative is completely monotone, i.e., it is a Bernstein function.
The L\'evy--Khinchine exponent corresponding to $\nu$ is $$
\psi(\xi)=\varphi(|\xi|^2),\quad \xi \in \Rd,
$$
and $L=-\varphi(-\Delta)$.
The corresponding L\'evy proces is called the subordinate Brownian motion.
Furthermore,
$\varphi$
is a {\em complete} Bernstein function if
\[\varphi(\lambda)=
\int_0^\infty (1-{\me}^{-t\lambda}) f(t)\,\dt,\quad \lambda\geq 0,\]
with a completely monotone $f$. See
Schilling, Song, and Vondra\v{c}ek \cite{MR2978140} for details.
\begin{proposition}\label{lem:subor}

The L\'{e}vy density $\nu$ in \eqref{eq:SBM} is smooth. If $\nu(r+1)\approx \nu(r)$, for $r\geq 1$, then
\begin{equation}\label{eq:nu-domination}
\left|\left(\frac{\rm d}{{\rm d}r}\right)^n\nu(r)\right|\leq C_n \nu(r), \quad r\ge 1, \; n\in \N,\end{equation}
in particular
{\rm \textbf{A1}} holds true.
\end{proposition}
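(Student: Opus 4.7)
My approach has three components: smoothness via differentiation under the integral, a pointwise Gaussian derivative bound, and an application of the doubling hypothesis.

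For smoothness, a direct computation yields
\[
\left(\frac{d}{dr}\right)^{\!n}\! g_t(r)=g_t(r)\sum_{k=0}^{\lfloor n/2\rfloor} c_{n,k}\,\frac{r^{n-2k}}{t^{n-k}}
\]
with constants $c_{n,k}$ depending only on $n,k$. For $r$ in a compact subinterval of $(0,\infty)$ this expression is, uniformly in $t>0$, dominated by a constant multiple of $g_t(r_0)$ for some fixed small $r_0>0$; one verifies this by splitting into $t$ small, where the exponential factor in $g_t(r)$ crushes every polynomial in $1/t$, and $t$ large, where the factor $1/t^{n-k}$ already decays. Since $\nu(r_0)=\int g_t(r_0)\,\eta(dt)<\infty$, differentiation under the integral sign is justified, $\nu\in C^\infty((0,\infty))$, and
\[
\left(\frac{d}{dr}\right)^{\!n}\!\nu(r)=\int_0^\infty \left(\frac{d}{dr}\right)^{\!n}\! g_t(r)\,\eta(dt).
\]

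The technical heart of the argument is the pointwise estimate
\[
\left|\left(\frac{d}{dr}\right)^{\!n}\! g_t(r)\right|\le C_n\,g_t(r-1),\qquad r\ge 2,\ t>0.
\]
Since $g_t(r)/g_t(r-1)=e^{-(2r-1)/(4t)}$, this reduces to bounding each summand
\[
\frac{r^{n-2k}}{t^{n-k}}\,e^{-(2r-1)/(4t)}
\]
by a constant depending only on $n$. Writing $v=1/t$ and maximising $v^{n-k}e^{-(2r-1)v/4}$ in $v>0$ gives an upper bound proportional to $r^{n-2k}(2r-1)^{-(n-k)}$; for $r\ge 2$ one has $2r-1\ge r$, so this factor is at most $r^{(n-2k)-(n-k)}=r^{-k}\le 1$.

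Integrating this inequality against $\eta$ yields $|(d/dr)^n\nu(r)|\le C_n\,\nu(r-1)$ for $r\ge 2$, and the hypothesis $\nu(s+1)\approx\nu(s)$ for $s\ge 1$ (equivalently $\nu(r-1)\approx\nu(r)$ for $r\ge 2$) upgrades this to \eqref{eq:nu-domination} in that range. For $r\in[1,2]$ both $|(d/dr)^n\nu|$ and $\nu$ are continuous, with $\nu$ bounded below by $\nu(2)>0$, so a compactness argument gives the estimate there too. Specialising to $n=1,2$ yields condition \textbf{A1}. The main obstacle is the pointwise Gaussian derivative bound; once it is in place, the remaining steps are routine.
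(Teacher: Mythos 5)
Your proposal is correct and takes essentially the same route as the paper: the explicit Gaussian derivative formula, the pointwise bound $\left|\left(\tfrac{\rm d}{{\rm d}r}\right)^n g_t(r)\right|\le C_n\,g_t(r-1)$ for $r\ge 2$ obtained by optimizing in $t$, integration against $\eta$, the doubling hypothesis $\nu(r+1)\approx\nu(r)$, and continuity plus $\nu\ge\nu(2)>0$ on $[1,2]$. The only cosmetic difference is in justifying differentiation under the integral sign (the paper argues via difference quotients for $\nu'$ and notes higher orders are analogous, whereas you use a locally uniform domination by $g_t(r_0)$), which does not change the substance of the argument.
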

\begin{proof}  Using \eqref{eq:SBM} we get, for $h>0$,
	\begin{equation}\label{eq:nugl}
\frac 1h\big(\nu(r+h) - \nu(r)\big) = \int_0^\infty g_t(r) \frac 1h \left(\me^{\frac {2rh-h^2}{4t}} - 1\right) \,\eta(\dt).
	\end{equation}
Let $0<h<r/4$.  Since $\me^{u}-1=\int_0^u \me^s\, \ds\leq u(1+{\me}^{u})$ for $u\geq 0$,  we get
\begin{eqnarray*}
0<\frac 1h \left(\me^{\frac{2rh-h^2}{4t}} - 1\right)&\leq& \frac{r}{2t}\left(1+ {\me}^{\frac{r^2}{8t}}\right)\end{eqnarray*}
and this quantity, multiplied by $g_t(r)$ is integrable with respect to $\eta.$
Letting $h\to 0$ in \eqref{eq:nugl} by the dominated convergence, we see that the
derivative of $\nu$ exists and
\begin{equation}\label{eq:partialnu}
 \nu'(r)= -\int_0^\infty\frac {r}{2 t}g_t(r) \, \eta(\dt), \quad r>0,
\end{equation}
so the integration and differentiation commute.
Continuity of the derivative is evident from \eqref{eq:partialnu}.
Higher order differentiability of $\nu$ can be established in the same way.

To prove \eqref{eq:nu-domination}
we first observe that for all $t>0$ and $r\geq 1$ we have $|g_t^{(n)}(r)|\leq W_n(r/2t) g_t(r),$ where $W_n$ is a  polynomial of  degree $n$ with nonnegative coefficients.
When $r\geq 2$ and $t\geq 0,$  then
$g_t(r)\left(r/2t\right)^n\leq  C_n  g_t(r-1),$
so that $|\nu^{(n)}(r)|\leq \widetilde C_n \nu(r-1)\approx \nu(r)$ for $r\ge 2,$ and thus for $r\geq 1$ because $\nu$ is strictly positive and decreasing.
\end{proof}

\begin{proposition}\label{lem:nuSc1}
If
$\int_r^\infty t^{-d/2}\,\eta(\dt)\geq c_1{e}^{-c_2r}$ for $r\geq 1$, then $\nu(r+1)\approx\nu(r)$, $r\geq1$.
\end{proposition}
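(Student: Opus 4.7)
The bound $\nu(r+1)\le \nu(r)$ is immediate from the radial monotonicity of $g_t$ in $r$, which is inherited by $\nu$; the substantive half of the statement is the reverse inequality $\nu(r)\le C\nu(r+1)$. My plan is to split the defining integral $\nu(r)=\int_0^\infty g_t(r)\eta(\dt)$ at the threshold $t=\beta r$, where $\beta\in(0,1)$ is a small parameter that will be calibrated against $c_2$.

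For the large-$t$ piece I would use the exact identity $g_t(r)=g_t(r+1)\me^{(2r+1)/(4t)}$: on $\{t\ge\beta r\}$ the exponential factor is bounded by $\me^{(2r+1)/(4\beta r)}\le \me^{3/(4\beta)}$ for $r\ge1$, hence this piece is at most $\me^{3/(4\beta)}\nu(r+1)$. For the small-$t$ piece that comparison is worthless, and I would instead employ a pointwise bound on $g_t(r)$. Writing $g_t(r)=t\cdot(4\pi)^{-d/2}t^{-d/2-1}\me^{-r^2/(4t)}$ and using the uniform boundedness of $u^{d/2+1}\me^{-u/2}$ at $u=r^2/(4t)$, one gets
\[g_t(r)\ \le\ C_d\,t\,r^{-d-2}\me^{-r^2/(8t)}, \qquad t,r>0.\]
Because $\int_0^1 t\,\eta(\dt)<\infty$ and $\eta([1,\infty))<\infty$ (both consequences of $\int(1\wedge t)\eta(\dt)<\infty$), the mass $\int_0^{\beta r}t\,\eta(\dt)$ is $O(1+\beta r)$, so that, after dropping $\me^{-r^2/(8t)}\le \me^{-r/(8\beta)}$,
\[\int_0^{\beta r} g_t(r)\,\eta(\dt)\ \le\ C\,r^{-d-1}\me^{-r/(8\beta)}.\]

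The remaining ingredient is a lower bound on $\nu(r+1)$ coming from the hypothesis. Keeping only $t\ge r+1$ in the integral defining $\nu(r+1)$ and using $\me^{-(r+1)^2/(4t)}\ge \me^{-(r+1)/4}$ there,
\[\nu(r+1)\ \ge\ (4\pi)^{-d/2}\me^{-(r+1)/4}\int_{r+1}^\infty t^{-d/2}\eta(\dt)\ \ge\ c\,\me^{-(c_2+1/4)(r+1)}.\]
I would then fix $\beta<1/(8c_2+2)$ so that $1/(8\beta)>c_2+1/4$. The small-$t$ bound is thereby dominated by a constant multiple of $\nu(r+1)$ uniformly for $r\ge 1$, since the exponent $(c_2+1/4)(r+1)-r/(8\beta)$ is bounded above and the polynomial factor $r^{-d-1}$ is bounded. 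Adding the two pieces yields $\nu(r)\le C\nu(r+1)$ on $[1,\infty)$, which is the missing direction.

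The main obstacle is the tug-of-war in the split. Replacing $g_t(r)$ by $g_t(r+1)$ on the large-$t$ side costs a factor $\me^{O(1/\beta)}$ that explodes as $\beta\to 0$, while the small-$t$ side must produce genuine Gaussian decay in $r$ to beat the merely exponential lower bound on $\nu(r+1)$ that the hypothesis provides. The pointwise estimate $g_t(r)\lesssim t\,r^{-d-2}\me^{-r^2/(8t)}$ furnishes precisely the exponential factor $\me^{-r/(8\beta)}$ on $\{t\le\beta r\}$, and calibrating $\beta$ small in terms of $c_2$ tips the exponent in our favour, closing the loop.
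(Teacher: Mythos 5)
Your proof is correct and follows essentially the same route as the paper's: split the integral at $t\approx r/(8c_2+2)$, beat the Gaussian decay $\me^{-r/(8\beta)}$ of the small-$t$ part against the exponential lower bound $c\,\me^{-(c_2+1/4)r}$ extracted from the hypothesis on $\{t\ge r\}$, and pay a bounded factor $\me^{O(1/\beta)}$ to compare $g_t(r)$ with $g_t(r+1)$ on the large-$t$ part. The only cosmetic differences are that the paper controls the small-$t$ piece via the factorization $g_t(r)=\me^{-r^2/(8t)}g_t(r/\sqrt{2})$ and the monotonicity of $\nu$ (instead of your pointwise bound $g_t(r)\lesssim t\,r^{-d-2}\me^{-r^2/(8t)}$ combined with $\int(1\wedge t)\,\eta(\dt)<\infty$), and it compares that piece with the tail of $\nu(r)$ itself rather than directly with $\nu(r+1)$.
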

\begin{proof}Assume that $\int_r^\infty t^{-d/2}\,\eta(\dt)\geq c_1{e}^{-c_2r}$ for $r\geq 1$.
By monotonicity of $\nu$, for $r\geq 1$ and $\lambda>0$,
$$\int^{r\lambda}_{0}g_t(r)\,\eta(\dt)\leq {e}^{-r^2/(8r\lambda)}\int^{r\lambda}_0g_t(r/\sqrt{2})\,\eta(\dt)\leq {e}^{-r/(8\lambda)}\nu(1/\sqrt{2})$$
and $$\int_{r}^{\infty}g_t(r)\,\eta(\dt)\geq (4\pi)^{-d/2}\me^{-r/4}
c_1{\me}^{-c_2 r}=c{e}^{-r(c_2+1/4)}.$$
Hence, for $\lambda_0=(8c_2+2)^{-1}$ we have
$$\int^{r\lambda_0}_{0}g_t(r)\,\eta(\dt)\leq c \int_{r}^{\infty}g_t(r)\,\eta(\dt),\quad r\geq 1.
$$
Since $\lambda_0<1$ we obtain
$$\nu(r)\approx \int^\infty_{r\lambda_0} g_t(r)\,\eta(\dt),\quad r\geq 1.$$
This yields
$$\nu(r+1)\geq \int^\infty_{0}g_t(r){e}^{-3r/(4t)}\,\eta(\dt)\geq {\me}^{-3/(4\lambda_0)} \int^\infty_{r\lambda_0}g_t(r)\,\eta(\dt)\approx\nu(r).$$
\end{proof}

\begin{lemma}
If $\varphi(\lambda)=\int_0^\infty(1-\me^{-t\lambda})\eta({\rm d}t)$ is complete Bernstein, then $\nu(r+1)\approx\nu(r)$, $r\geq 1$.
\end{lemma}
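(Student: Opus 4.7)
The plan is to reduce the lemma to Proposition \ref{lem:nuSc1} and then to produce the required exponential lower bound on the tail $\int_r^\infty t^{-d/2}\eta({\rm d}t)$ by exploiting the Bernstein representation of a completely monotone function.

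First, since $\varphi$ is complete Bernstein, by definition $\eta({\rm d}t)=f(t)\,{\rm d}t$ with $f$ completely monotone on $(0,\infty)$. By Bernstein's theorem there exists a Borel measure $\mu$ on $[0,\infty)$ with
\[
f(t)=\int_{[0,\infty)}{\me}^{-st}\,\mu({\rm d}s),\qquad t>0.
\]
Since $\eta$ is the L\'evy measure of a nontrivial subordinator, $f\not\equiv 0$, so $\mu\not\equiv 0$. In particular, there exists $M>0$ with $c_0:=\mu([0,M])>0$, and restricting the Laplace representation to $[0,M]$ yields the pointwise lower bound
\[
f(t)\ \geq\ c_0\,{\me}^{-Mt},\qquad t>0.
\]

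Next I will plug this into the tail integral. For $r\geq 1$, the substitution $t=ru$ (or simply restricting to $[r,r+1]$) gives
\[
\int_r^\infty t^{-d/2}f(t)\,{\rm d}t\ \geq\ c_0\int_r^{r+1}t^{-d/2}{\me}^{-Mt}\,{\rm d}t\ \geq\ c_0(r+1)^{-d/2}{\me}^{-M(r+1)}.
\]
Choosing any $c_2>M$, the polynomial factor $(r+1)^{-d/2}$ is bounded below by $c'\,{\me}^{-(c_2-M)r}$ uniformly in $r\geq 1$, so that
\[
\int_r^\infty t^{-d/2}\eta({\rm d}t)\ \geq\ c_1\,{\me}^{-c_2 r},\qquad r\geq 1.
\]
This is exactly the hypothesis of Proposition \ref{lem:nuSc1}, whose conclusion is $\nu(r+1)\approx\nu(r)$ for $r\geq 1$, completing the proof.

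There is no serious obstacle here: the only nontrivial input is Bernstein's theorem, and the only point deserving care is that $f$ (and hence $\mu$) is genuinely nonzero, which is built into the standing assumption that $\eta$ is nontrivial. Everything else is a short computation with elementary exponential estimates.
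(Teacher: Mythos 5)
Your proposal is correct and follows essentially the same route as the paper: use the complete Bernstein assumption to write $\eta({\rm d}t)=f(t)\,{\rm d}t$ with $f$ completely monotone, apply the Laplace (Bernstein) representation to get a lower bound $f(t)\gtrsim \me^{-Mt}$ from a piece of the representing measure near the origin, deduce the exponential lower bound on $\int_r^\infty t^{-d/2}\eta({\rm d}t)$, and conclude via Proposition~\ref{lem:nuSc1}. Your handling of the polynomial factor $(r+1)^{-d/2}$ by enlarging the exponential rate is exactly the (implicit) step in the paper's estimate, so there is nothing to add.
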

\begin{proof} By our assumptions $\eta({\rm d}t)=f(t){\rm d}t$
and there is a measure $\mu$ such that
$$f(t)=\int^\infty_0\me^{-ts}\,\mu(\ds),\quad s>0.$$
Let $s_0>0$ be such that $\mu((0,s_0])>0$. Then,
$$f(t)\geq  \me^{-ts_0}\mu((0,s_0]),\quad t>0,$$
hence for some constants $c_1,c_2$,
$$\int^\infty_rt^{-d/2}\,\eta(\dt)\geq c\int^\infty_rt^{-d/2}\me^{-s_0 t}\,\dt\geq c_1 {\me}^{c_2r},\quad r\geq 1.
$$
The lemma follows from Proposition \ref{lem:nuSc1}.
\end{proof}
By \cite[Theorem 5.18]{2016arXiv160604178G}, the inequality \eqref{eq:nuSc}  of {\rm \textbf{A2}} is satisfied if the derivative of $\varphi$ satisfies
\begin{equation}\label{eq:phiprim}
c^{-1}\varphi(r)\lambda^{-d/2-1+\beta}\leq \varphi'(\lambda r)\leq c \lambda^{-\alpha}\varphi(r),\quad \lambda,r\geq 1,
\end{equation}
for some $c,\alpha,\beta>0$. Next we discuss \eqref{es:nuSc1} in {\rm \textbf{A2}}.
The simplest situation arises when  inequalities \eqref{eq:phiprim} hold for every $r>0.$ Then \eqref{eq:nuSc} holds for every $r>0$ and therefore \eqref{es:nuSc1} holds as well. Hence the assumption {\rm \textbf{A2}} is satisfied in that case.
\begin{example}{\rm Assumptions {\rm \textbf{A1}} and {\rm \textbf{A2}} hold for the following operators:
\begin{align*}
L&=\Delta^{\alpha}\log^{\beta}(1+\Delta^{\gamma}) \quad \mbox{ if }
\gamma,\alpha,\alpha+2\beta\in [0,1), \ \gamma\beta +\alpha>0,\\
L&=\Delta^{\alpha_1}+\Delta^{\alpha_2}, \quad \mbox{ if }
\alpha_1,\alpha_2\in (0,1).
\end{align*}
The corresponding Bernstein functions and more examples are discussed in detail in \cite{MR2978140}}.
\end{example}

\appendix
\section{}
\subsection{
Not hitting the boundary}\label{app:A}
The boundary effects are easier to handle if the L\'evy process $X$ does not hit $\partial D$ at $\tau_D$. This motivates the following development.
Assume that $\nu$ satisfies {\rm \textbf{A2}}.
Then for every $R\in (0,\infty)$,
\begin{equation}\label{eq:nuScR}\nu(\lambda r)\leq c \lambda^{-d-\beta}\nu(r),\quad 0<\lambda\le 1,\, 0<r\leq R.
\end{equation}
Indeed, for $r\in (0,1]$ we can take $c=C$, and if $1<r\le R$, then
$$
\nu(\lambda r)\le \nu(\lambda 1) \le
C \lambda^{-d-\alpha} \nu(1)
\le C  \frac{\nu(1)}{\nu(R)}\ \lambda^{-d-\alpha}\nu(r).
$$
\noindent Let $K,h$ be the functions defined by \eqref{eq:Kdef}.
Recall that $K>0,$ $h>0$ and $h$ is strictly decreasing, but $r^2 h(r)$ is increasing.
Thus for $a\ge 1$ and
$r>0$,
\begin{equation}\label{e:ch}
h(r)\ge h(ar)=(ar)^2h(ar)/(ar)^2\ge r^2h(r)/(ar)^2=h(r)/a^2.
\end{equation}
Recall that $\omega_d=2 \pi^{d/2}/\Gamma(d/2)$ is the surface area of the unit sphere in $\Rd$. We obtain
\begin{eqnarray*}
	K(r)&=&r^{-2}\int_0^r \omega_d s^{d-1+2} \nu(s)\ds\ge \nu(r) r^{d}\omega_d/(d+2), \quad r>0.
\end{eqnarray*}
By \eqref{eq:nuScR}, for every $R<\infty$ we get
\begin{eqnarray*}
	K(r)&=&r^{-2}\int_0^r \omega_d s^{d+1} \nu(s)\ds
	\le c r^{-2}\int_0^r \omega_d s^{d+1} \nu(r)(s/r)^{-d-\beta}\ds\\
	&=& \nu(r)r^{d}c\omega_d/(2-\beta), \quad r\le R.
\end{eqnarray*}
Therefore,  for every $R\in (0,\infty)$,
\begin{equation}\label{e:cKnu}
\nu(r)\approx \frac{K(r)}{r^d},\quad 0<r\le R.
\end{equation}
If $0<r\le R/2$, then by \eqref{eq:nuScR} we have
\begin{eqnarray*}
	\nu(B^c_r)&=&\int_r^\infty \omega_d s^{d-1}\nu(s)\,\ds
	\ge c \int_r^R   s^{d-1}\frac{K(s)}{s^d}\,\ds\\
	&=&-c\int_r^R h'(s)\,\ds=c(h(r)-h(R))\ge c\left(1-\frac{h(R)}{h(R/2)}\right) h(r)\\
	&\ge& c(\nu(B_r^c)+K(r))\ge c \nu(B_r^c).
\end{eqnarray*}
Thus for every $R\in (0,\infty)$,
\begin{equation}\label{e:cnuh}
\nu(B^c_r)\approx h(r), \quad 0<r\le R/2.
\end{equation}

\begin{lemma}\label{l:nuSc}		
	If $|\partial D| = 0$ and VDC holds locally for $D^c$,
	then $\mathbb{P}^x(X_{\tau_D}\in \partial D) = 0$, $x\in D$.
\end{lemma}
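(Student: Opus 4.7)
The plan is to split $\{X_{\tau_D}\in\partial D\}$ into the jump-exit event $\{X_{\tau_D-}\neq X_{\tau_D}\}$ and the creeping event $\{X_{\tau_D-}=X_{\tau_D}\}$; on the latter, left-continuity from within $D$ forces $X_{\tau_D-}=X_{\tau_D}\in\partial D$. I would show that each of these contributes zero probability.

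For the jump-exit contribution, I would apply the Ikeda--Watanabe formula \eqref{eq:IW}:
\[\mathbb{P}^x(X_{\tau_D-}\neq X_{\tau_D}\in\partial D)\;=\;\int_D G_D(x,y)\,\nu(y,\partial D)\,dy.\]
Since $\nu(y,dz)$ has density $\nu(|z-y|)$ with respect to Lebesgue measure on $\mathbb R^d$, the right-hand side vanishes the instant $|\partial D|=0$. The substep is thus to derive $|\partial D|=0$ from the local VDC for $D^c$: at any would-be Lebesgue density point $x_0\in\partial D$, one combines the VDC bound $|D^c\cap B(x_0,r)|\gtrsim r^d$ with the openness of $D$ (which supplies a transversal positive-density direction of $D$ near $x_0$) to force a contradiction, in the spirit of the classical result invoked just before Definition~\ref{def:cont} and of \cite{MR1893056}.

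For the creeping event, the plan is to show that the killing intensity $\kappa_D(y)=\nu(y,D^c)$ blows up as $y\to\partial D$. Given $y\in D$ at distance $\delta:=\delta_D(y)$ from $\partial D$, pick $y'\in\partial D$ with $|y-y'|=\delta$; then $B(y',\delta)\subset B(y,2\delta)$ and VDC gives $|D^c\cap B(y,2\delta)|\gtrsim\delta^d$. By unimodality of $\nu$ together with the comparison \eqref{e:cKnu} valid on any bounded scale,
\[\kappa_D(y)\;\geq\;\nu(2\delta)\cdot|D^c\cap B(y,2\delta)|\;\gtrsim\;\delta^d\nu(\delta)\;\approx\;K(\delta).\]
By {\bf A2}, the function $r^{d+\beta}\nu(r)$ is almost increasing on $(0,1]$, so $K(\delta)\to\infty$ as $\delta\to 0$. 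A compensator (Meyer-type) argument then rules out creeping: the compensator of the point process of jumps from $D$ into $D^c$, integrated up to $\tau_D$, equals $\int_0^{\tau_D}\kappa_D(X_s)\,ds$; on the creeping event, $X_s\to\partial D$ along $s\uparrow\tau_D$ and so this quantity is infinite, which is incompatible with the absence of a killing jump and forces the event to have probability zero.

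The main technical obstacle will be the $|\partial D|=0$ step: the hypothesis puts VDC only on $D^c$, and not on both sides of the boundary, so a density-point contradiction must exploit the openness of $D$ in an essential way. In all concrete applications later in the paper (bounded $C^{1,1}$ sets and the half-space), $|\partial D|=0$ is automatic, so the abstract proof effectively reduces to the creeping analysis, which is straightforward once the appendix's scaling estimates for $K$ and $\nu$ are in place.
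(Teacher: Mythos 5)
Your decomposition is reasonable, and the jump-exit half (Ikeda--Watanabe plus $|\partial D|=0$) is fine, but the continuous-exit ("creeping") step contains the decisive gap. From $\kappa_D(X_s)\to\infty$ as $s\uparrow\tau_D$ you cannot conclude that $\int_0^{\tau_D}\kappa_D(X_s)\,\ds=\infty$: divergence of the time integral depends on the rate at which $\delta_D(X_s)\to 0$, which you do not control. For example, in the stable-like regime $\kappa_D(y)\approx \delta_D(y)^{-\alpha}$ with $\alpha<1$, a path approaching the boundary like $\delta_D(X_s)\asymp \tau_D-s$ gives a finite integral, and nothing in the hypotheses excludes such approach rates. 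Moreover, there is no contradiction available at the end: by the L\'evy system identity behind \eqref{eq:IW}, $\mE^x\int_0^{\tau_D}\kappa_D(X_s)\,\ds=\mpr^x\bigl(X_{\tau_D-}\neq X_{\tau_D}\in D^c\bigr)\le 1$, so the integral is a.s.\ finite anyway; finiteness is perfectly compatible with a continuous exit, and what you would actually have to prove is pathwise divergence on the creeping event, which is precisely the hard part. A further problem is quantitative: your single-annulus bound gives $\kappa_D(y)\apprge K(\delta_D(y))$, but $K$ need not blow up at $0$ (for the ``$0$-order'' example $\nu(z)=|z|^{-d}\log^{-1}(2+|z|)$ of Section \ref{sec:examples}, $K$ is bounded), and \textbf{A2} is an upper scaling condition, so it cannot force blow-up; the quantity that always diverges is $h(\delta)=K(\delta)+\nu(B_\delta^c)$, because $\nu(\Rd)=\infty$, and this is the bound the paper actually proves, $\nu(x,D^c)\apprge h(\delta_D(x)/2)$, by summing over dyadic annuli and using \eqref{e:cnuh}. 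The paper then avoids any creeping analysis: after reducing to bounded $D$ via $\tau_{B_R}$ and $D\cap B_R$ (a reduction your proposal skips, though only local VDC is assumed), it shows that from every $x\in D$ the exit from $B(x,\delta_D(x)/2)$ lands in $D^c$ with a probability bounded below by a uniform $c>0$ (using $P_{B_r}(0,z)\apprge \nu(z)/h(r)$), and since $|\partial D|=0$ this landing is off $\partial D$; the strong Markov property then yields $\sup_x\mpr^x(X_{\tau_D}\in\partial D)\le(1-c)\sup_y\mpr^y(X_{\tau_D}\in\partial D)$, hence the supremum is $0$. This single iteration, following Wu \cite{MR1893056}, disposes of both the jump-onto-$\partial D$ and the continuous-exit scenarios at once.

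A secondary issue is your sketched proof that $|\partial D|=0$. Since $\partial D\subseteq D^c$, the VDC bound \eqref{eq:VDC} could in principle be carried by $\partial D$ itself, and openness of $D$ provides no quantitative lower volume density of $D$ (or of the interior of $D^c$) at a boundary point; so a Lebesgue density-point argument does not close as stated --- it would need the VDC mass to lie in $\mathrm{int}(D^c)$, or some additional input. The paper does not reprove this fact either; it invokes it from \cite{MR1893056}. So your assessment of where the difficulty lies is inverted: the $|\partial D|=0$ step can be quoted, while the creeping step is exactly what requires the Wu-type iteration rather than a compensator heuristic.
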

For the narrower class of Lipschitz open sets and all isotropic pure-jump L\'evy processes with infinite L\'evy measure the result is stated by Sztonyk after Theorem~1 in \cite{MR1825650}. Our proof follows the argument given for the fractional Laplacian by Wu \cite[Theorem~1]{MR1893056}.

\begin{proof}[Proof of Lemma~\ref{l:nuSc}.]
	The trajectories of $X$ are c\`adl\`ag, so locally bounded, therefore
	$$\mathbb{P}^x(X_{\tau_D}\in \partial D)=\mathbb{P}^x(\tau_D<\infty, X_{\tau_D}\in \partial D)=\lim_{R\to \infty}\mathbb{P}^x(\tau_D<\tau_{B_R}, X_{\tau_D}\in \partial D).$$
	We have $\mathbb{P}^x(\tau_D<\tau_{B_R}, X_{\tau_D}\in \partial D)\leq \mathbb{P}^x(X_{\tau_{D\cap B_R}}\in \partial (D \cap B_R))$ for every $R>0$. Indeed, if $\tau_D<\tau_{B_R}$, then $X_{\tau_D}\in  B_R$ and it suffices to note that $\partial D\cap B_R\subset \partial (D\cap B_R)$. Therefore
	in what follows we may assume that $D$ is bounded and \eqref{eq:VDC} holds.
	Let $a=\max\{(2|B_1|/c)^{1/d},2\}$, where $c$ is the constant from \eqref{eq:VDC}.	
	By \eqref{e:cnuh},
	$$\nu(B^c_r)\approx h(r), \quad r\leq a^2\mathrm{diam}(D).$$
	Here, as usual,
	$$\mathrm{diam}(D)=\sup\{|x-y|: x,y\in D\}.$$
	For $x\in D$ we let $r_x = \delta_D(x)/2$ and $B_x = B(x,r_x)$. If $Q\in \partial D$ is such that $|x-Q| = \delta_D(x)$, then by \eqref{eq:VDC},
\begin{equation}\label{eq:Dc}
\left|D^c\cap\left(B(Q,ar)\setminus B(Q,r)\right)\right|\geq |B(Q,r)|,\quad r>0.
\end{equation}
	By unimodality of $\nu,$ \eqref{eq:Dc} and then \eqref{e:ch} we get
	\begin{align*}
	\nu(x,D^c)\geq&\sum_{k\geq 1} 	 \nu\left(D^c\cap\left(B(Q,a^kr_x)\setminus B(Q,a^{k-1}r_x)\right)-x\right)
	\\ \geq& \sum_{k\geq 1}  \nu(a^kr_x+2r_x)|B(Q,a^{k-1}r_x)|\\
\geq &\sum_{k\geq 1}  \nu(a^{k+1}r_x)|B(Q,a^{k+2}r_x)\setminus B(Q,a^{k+1}r_x)|\\
 =&\sum_{k\geq 1}  \nu(a^{k+1}r_x)|B(0,a^{k+2}r_x)\setminus B(0,a^{k+1}r_x)|\\
	\geq&  a^{-3d}\nu(B^c_{a^2r_x})\approx h(a^2r_x)\approx h(r_x).
	\end{align*}
	The estimates for Poisson kernel for the ball \cite[Lemma~2.2]{MR3729529} give
	$$P_{B_r}(0,z) \apprge \frac {\nu(z)}{h(r)}, \quad |z|>r>0.$$
	By \eqref{eq:IWG} we have $\omega_{B_x}(x,A):=\mathbb{P}^x(X_{\tau_{B_x}}\in A)=\int_{A} P_{B_{r_x}}(0,z-x)\,\dz$, if $\mathrm{dist}(D,A)>0$,
		hence
	\begin{align}\label{eq:poisest}
	\mathbb{P}^x(X_{\tau_{B_x}}\in D^c)
	\apprge \frac {\nu(x,D^c)}{h(r_x)} \geq c,
	\end{align}
	where $c>0$ does not depend on $x$.
	Following \cite{MR1893056}, we write
	\begin{align*}
	\mathbb P^x(X_{\tau_D}\in \partial D) = \mathbb{P}^x(X_{\tau_{B_x}}\in \partial D) + \mathbb{P}^x(X_{\tau_{B_x}}\in D, X_{\tau_D}\in\partial D)
	\end{align*}
	The first term vanishes because
	$|\partial D| = 0$. By the strong Markov property and \eqref{eq:poisest}, the second term is equal to
	\begin{align*}
	\int_{D\setminus B_x}\mathbb{P}^y(X_{\tau_D} \in \partial D)\,
	\omega_{B_x}(x,\dy) &\leq \sup\limits_{y\in D}\mathbb P^y(X_{\tau_D} \in \partial D)
	\mathbb{P}^x(X_{\tau_{B_x}}\in D\setminus B_x)\\
	&\leq (1-c) \sup\limits_{y\in D}\mathbb P^y(X_{\tau_D} \in \partial D).
	\end{align*}
	Thus, for every $x\in D$ we have
	\begin{equation*}
	\mathbb P^x (X_{\tau_D}\in \partial D) \leq (1-c)\sup\limits_{y\in D} \mathbb P^y(X_{\tau_D} \in \partial D).
	\end{equation*}
	This implies that
	$\sup\limits_{x\in D} \mathbb P^x(X_{\tau_D} \in \partial D)=0$.
	
\end{proof}
\begin{corollary}\label{c:achm}
	If $D$ is a bounded open set with $|\partial D| = 0$ and VDC holds for $D^c$ and $x\in D$, then
	\begin{align}\	\mathbb \mpr^x(X_{\tau_D} \in \mathrm{int}(D^c))&=\int_{D^c} P_D(x,y)\,\dy=1,
	\label{e:achm}\\
		\mathbb \mpr^x(X_{\tau_D-} \in D)&=1.\label{e:achm2}
	\end{align}
\end{corollary}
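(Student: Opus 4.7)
The plan is to show that $X$ almost surely exits $D$ through a jump landing in $\mathrm{int}(D^c)$, and then to read off both identities from the Ikeda-Watanabe formula \eqref{eq:IW}.

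First I would note that since $D$ is bounded and the L\'{e}vy measure is infinite, $\mE^x\tau_D$ is finite (a standard property of isotropic L\'{e}vy processes with unbounded characteristic exponent, cf.\ the references cited after \eqref{eq:dyn}). In particular $\tau_D<\infty$ a.s., so by right-continuity of paths we have $X_{\tau_D}\in D^c$ a.s., and Lemma~\ref{l:nuSc} upgrades this to $\mpr^x(X_{\tau_D}\in\mathrm{int}(D^c))=1$.

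Next, the c\`adl\`ag property together with $X_s\in D$ for every $s<\tau_D$ forces the left limit $X_{\tau_D-}$ to lie in $\overline{D}$. Since $\overline{D}\cap\mathrm{int}(D^c)=\emptyset$, combining with the previous step yields $X_{\tau_D-}\neq X_{\tau_D}$ a.s.; in other words, $X$ exits $D$ through a jump.

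I would then apply \eqref{eq:IW} with $I=[0,\infty)$, $B=D^c$. Because the Dirichlet heat kernel $p_u^D(x,\cdot)$ is supported in $D$ (indeed $\mpr^x(X_u\in\partial D,\tau_D>u)=0$ since $\partial D\cap D=\emptyset$), taking $A=\overline{D}$ or $A=D$ produces the same right-hand side, and two applications of Fubini give
\[
1=\mpr^x\bigl(X_{\tau_D-}\in\overline{D},\ X_{\tau_D-}\neq X_{\tau_D},\ X_{\tau_D}\in D^c\bigr)=\intd G_D(x,y)\kappa_D(y)\dy=\podc P_D(x,z)\dz,
\]
where the last step uses the definition \eqref{eq:Pk} of the Poisson kernel. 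The rightmost equality, combined with the a.s.\ statement $\mpr^x(X_{\tau_D}\in\mathrm{int}(D^c))=1$ from the first paragraph, yields \eqref{e:achm}; the middle one, combined with \eqref{eq:skad1}, yields \eqref{e:achm2}.

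The main obstacle is ruling out a continuous approach to $\partial D$ at the exit time, i.e., the event $X_{\tau_D-}=X_{\tau_D}\in\partial D$. This is precisely what Lemma~\ref{l:nuSc} delivers via the VDC fatness of $D^c$: without such regularity the Ikeda-Watanabe formula would miss a potential creeping portion of $\omega_D^x$, and both identities could fail.
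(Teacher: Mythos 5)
Your argument is correct. For \eqref{e:achm} you follow essentially the paper's route: finiteness of $\tau_D$ (here via $\mE^x\tau_D<\infty$ for bounded $D$, which the paper also uses with the same references), Lemma~\ref{l:nuSc} to exclude $\partial D$, the observation that $X_{\tau_D-}\in\overline D$ forces a jump exit, and then \eqref{eq:IW} to identify the probability with $\int_{D^c}P_D(x,\cdot)$ -- you merely spell out the Ikeda--Watanabe step that the paper leaves implicit. For \eqref{e:achm2} your route is genuinely different: the paper disposes of the event $\{X_{\tau_D-}\in\partial D\}$ by citing the argument in the proof of Lemma~17 of \cite{MR1438304}, whereas you deduce $\mpr^x(X_{\tau_D-}\in D)=1$ internally, by noting that the full mass of the jump-exit law computed from \eqref{eq:IW} equals $\int_D G_D(x,y)\kappa_D(y)\dy=1$ (the heat kernel being concentrated on $D$), and then invoking \eqref{eq:skad1}. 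This buys a self-contained proof that avoids the external reference and makes transparent that both identities are just two marginals of the same Ikeda--Watanabe computation; the paper's citation, on the other hand, is shorter and isolates the only genuinely probabilistic boundary issue (no creeping onto $\partial D$) in one place. Two cosmetic points you may wish to make explicit: $\tau_D>0$ a.s.\ for $x\in D$ (so that the left limit is indeed a limit of points of $D$), and the passage from $I$ bounded to $I=(0,\infty)$ in \eqref{eq:IW} by monotone convergence; neither affects the validity of the argument.
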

\begin{proof}
	Clearly, $\mpr^x(\tau_D<\infty)=1$, so \eqref{e:achm}  follows from Lemma~\ref{l:nuSc}. If $X_{\tau_D-}\in \partial D$ $a.s.$, then
	$X_{\tau_D}\in \partial D$ $a.s.$ \cite[proof of Lemma~17]{MR1438304}, proving \eqref{e:achm2}.
\end{proof}

\begin{corollary}\label{cor:ac}
	If  VDC holds locally for $D^c$,
	then
for nonnegative or integrable  $u$,
$$
\mathbb E^x u(X_{\tau_{D}})=\int_{D^c}u(y)P_{D}(x,y)\,\dy, \quad x\in D.
$$
\end{corollary}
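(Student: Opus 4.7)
The plan is to piece together the Ikeda–Watanabe formula \eqref{eq:IW} with Lemma~\ref{l:nuSc}, which is where essentially all the real work has been done. The overall strategy is to identify the harmonic measure $\omega_D^x$ on $D^c$ with the measure $P_D(x,z)\,dz$, and then integrate $u$ against it.

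First, I would use \eqref{eq:IWG}, which is already the Ikeda–Watanabe formula restricted to the jump-out event for sets at positive distance from $D$: for every Borel set $B$ with $\dist(B,D)>0$,
\[
\omega_D^x(B)=\mpr^x(X_{\tau_D}\in B)=\int_B P_D(x,z)\,\dz.
\]
Exhausting $\overline{D}^c$ by the increasing sequence $B_n=\{z\in\overline{D}^c:\dist(z,D)\ge 1/n\}$ and invoking monotone convergence on both sides extends this identity to every Borel $B\subseteq \overline{D}^c$.

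Next, I would invoke Lemma~\ref{l:nuSc} (applicable because VDC holds locally for $D^c$): it gives $\omega_D^x(\partial D)=\mpr^x(X_{\tau_D}\in \partial D)=0$. Consequently $\omega_D^x$ is concentrated on $\mathrm{int}(D^c)=\overline{D}^c$, and combining with the previous step we obtain $\omega_D^x(\dy)=P_D(x,y)\,\dy$ as measures on $D^c$. For nonnegative Borel $u$ this yields, by definition of $\omega_D^x$ and standard monotone-class/Fubini arguments,
\[
\mE^x u(X_{\tau_D})=\int_{D^c} u(y)\,\omega_D^x(\dy)=\int_{D^c}u(y)P_D(x,y)\,\dy.
\]
The case of integrable $u$ follows by splitting $u=u^+-u^-$ and applying the nonnegative case to each part (both integrals are then finite by the integrability hypothesis together with the identity just proved).

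There is no real obstacle beyond Lemma~\ref{l:nuSc}: once we know that the process cannot exit $D$ exactly onto $\partial D$, the proof reduces to a routine bookkeeping argument that combines Ikeda–Watanabe (which is guaranteed to account for \emph{all} of the harmonic measure, precisely because the only alternative, a continuous exit, is ruled out by the pure-jump nature of $X$ together with $\omega_D^x(\partial D)=0$).
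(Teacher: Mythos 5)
Your argument is correct and is essentially the proof the paper intends (the corollary is stated as an immediate consequence of the Ikeda--Watanabe formula and Lemma~\ref{l:nuSc}): extend \eqref{eq:IWG} to all Borel subsets of $\overline{D}^c$ by monotone limits, use $\mpr^x(X_{\tau_D}\in\partial D)=0$ to see the harmonic measure is carried by $\mathrm{int}(D^c)$ with density $P_D(x,\cdot)$, and integrate $u$ (first $u\ge 0$, then $u=u^+-u^-$). The only point you leave tacit is that replacing $\int_{\mathrm{int}(D^c)}$ by $\int_{D^c}$ on the right-hand side uses $|\partial D|=0$, which the paper records as a consequence of VDC holding locally for $D^c$.
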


\subsection{Approximation by smooth functions}\label{sec:c}
In this section we let $\nu$ be an arbitrary L\'evy measure on $\Rd$, i.e., we only assume that $\int_{\mR^d} (1\wedge|y|^2)\,\nu(\dy) < \infty$ and $\nu(\{0\}) = 0$.
In this general case the quadratic form is best defined as
$$\mathcal E _{D}(u,u) = \tfrac12\iint\limits_{\mR^d\times \mR^d} (u(x) - u(x+y))^2(\textbf{1}_{D}(x)\vee\textbf{1}_{D}(x+y)) \,\nu(\dy) \dx,$$
and $\vd_0$ is defined as before, cf. \cite{MR3738190}.
The following theorem is an extension of the result by Valdinoci et al. \cite{MR3310082}, where it was proved for the fractional Laplacian. The methods we use are rather classical, cf.
\cite[Section 5.3]{MR2597943}.
\begin{theorem}\label{th:density}
Let $\nu$ be an arbitrary L\'evy measure.
	If $D$ has continuous boundary and $u\in \vd_0$, then there are functions $\phi_n\in C^\infty_c(D)$ such that
	$
\ED(u-\phi_n,u-\phi_n) \rightarrow 0$ as $n\to \infty$.
\end{theorem}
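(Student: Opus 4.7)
The plan adapts the classical scheme of \cite{MR3310082} from the fractional Laplacian to arbitrary L\'evy measures: truncate $u$ to a bounded and compactly supported function, decompose it via a smooth partition of unity subordinate to an interior set and the boundary charts of $D$, translate each boundary piece a small distance into $D$ along the axial direction of its chart, and finally mollify. The key obstacle is geometric: keeping each translated piece inside $\vd_0$ (i.e.\ vanishing on $D^c$). Once this is secured, translation continuity of $\mathcal E_{\mR^d}$ follows from Plancherel and the L\'evy--Khinchine symbol, and the final mollification is standard.

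First I would carry out truncation and compact-support reduction. Replace $u$ by $(u\wedge N)\vee(-N)$ and let $N\to\infty$; the increments decrease pointwise, so $\ED$-convergence follows from dominated convergence with majorant $(u(x)-u(y))^2\nu(x,y)$. Next multiply by a smooth cutoff $\chi_R\in C_c^\infty(\mR^d)$ with $\chi_R\equiv 1$ on $B_R$ and $\|\nabla\chi_R\|_\infty\le 2$, noting that $\chi_R u\in\vd_0$ because $u=0$ on $D^c$. Using the splitting
\[
(\chi_R u)(x)-(\chi_R u)(y)=\chi_R(x)(u(x)-u(y))+u(y)(\chi_R(x)-\chi_R(y))
\]
together with $|\chi_R(x)-\chi_R(y)|\le C(|x-y|\wedge 1)$ reduces $\ED(u-\chi_R u,u-\chi_R u)\to 0$ to dominated convergence against the finite measure $(|z|^2\wedge 1)\nu(\mathrm dz)$, using $u\in L^2$ from the preceding truncation step. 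Henceforth I may assume $u$ is bounded with compact support.

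Next, using Definition~\ref{def:cont}, cover $\partial D$ by the charts $U_1,\dots,U_m$ and pick an interior set $U_0\subset\subset D$ so that $\{U_i\}_{i=0}^m$ covers $\overline{\mathrm{supp}\,u}$, with a subordinate smooth partition of unity $\{\eta_i\}$. Let $u_i=\eta_i u\in\vd_0$, so $u=\sum_{i=0}^m u_i$. The interior piece $u_0$ is already compactly supported in $D$ and can be mollified directly. For $i\ge 1$ work in the chart $T_i$, where $D\cap U_i=\{x_d>f_i(x')\}\cap U_i$; set $v_i=T_i^{-1}(e_d)$, and for small $h>0$ define the translate $u_i^h(x)=u_i(x-hv_i)$. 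A short geometric check, using the graph representation of $D$ in the chart together with the fact that $u_i$ vanishes outside $U_i$, shows that $u_i^h$ also vanishes on $D^c$, hence belongs to $\vd_0$. Continuity of $f_i$ and compactness of $\mathrm{supp}\,u_i$ further guarantee that for all sufficiently small $h$ the support of $u_i^h$ sits inside $D$ and is separated from $\partial D$ by some $\delta_h>0$. This is the step where the continuous-boundary hypothesis enters essentially, and is the main obstacle of the proof.

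Finally I establish translation continuity and mollify. Since $u_i^h-u_i\in\vd_0$, the form $\ED$ on it coincides with $\mathcal E_{\mR^d}$. The L\'evy--Khinchine identity
\[
\mathcal E_{\mR^d}(f,g)=(2\pi)^{-d}\int_{\mR^d}\hat f(\xi)\overline{\hat g(\xi)}\,\psi(\xi)\,\mathrm d\xi,\qquad \psi(\xi)=\int_{\mR^d}(1-\cos \xi\cdot z)\,\nu(\mathrm dz),
\]
together with translation invariance and polarization, yields
\[
\mathcal E_{\mR^d}(u_i^h-u_i,u_i^h-u_i)=2(2\pi)^{-d}\int_{\mR^d}\bigl(1-\cos(h\xi\cdot v_i)\bigr)\,|\hat u_i(\xi)|^2\,\psi(\xi)\,\mathrm d\xi.
\]
The integrand is dominated by $2|\hat u_i(\xi)|^2\psi(\xi)\in L^1(\mR^d)$ (with integral proportional to $\mathcal E_{\mR^d}(u_i,u_i)<\infty$) and tends to $0$ pointwise as $h\to 0$, so the left-hand side vanishes by dominated convergence. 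Once $u_i^h$ is separated from $\partial D$ by $\delta_h$, convolution with a standard mollifier $\rho_\epsilon$ of radius $\epsilon<\delta_h/2$ produces $u_i^h\ast\rho_\epsilon\in C_c^\infty(D)$, and $\mathcal E_{\mR^d}$-convergence as $\epsilon\to 0$ follows from the same Fourier identity. A diagonal procedure over the parameters $N$, $R$, $h$, $\epsilon$ then yields the required sequence $\phi_n$.
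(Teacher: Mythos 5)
Your proposal is correct in outline and follows the same scheme, adapted from \cite{MR3310082}, that the paper uses, but the implementation differs in two respects worth noting. The paper does not redo the geometric construction at all: it cites \cite{MR3310082} for the truncation, partition of unity over the boundary charts, inward translation and final smoothing, and devotes its proof entirely to checking that the three operations involved (mollification, translation, cut-off) are continuous in the seminorm $\sqrt{\ED(\cdot,\cdot)}$. It does this by direct real-variable estimates: Jensen's inequality, dominated convergence, and the continuity of translations in $L^2(\mR^d)$ applied to the increment functions $x\mapsto u(x)-u(x+y)$, which lie in $L^2(\mR^d)$ for $\nu$-a.e.\ $y$ as soon as $\ED(u,u)<\infty$; consequently the paper's continuity lemmas need neither boundedness, nor compact support, nor $u\in L^2(\mR^d)$. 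You instead write out the geometry explicitly (your chart argument is essentially the one from \cite{MR3310082}, and is fine provided you also require $h<\dist(\mathrm{supp}\,\eta_i,\,U_i^c)$ so that the translated support stays inside the chart, where the graph representation controls membership in $D$), and you prove translation and mollification continuity via Plancherel and the symbol $\psi$. The Fourier route is valid, but it forces the pieces to be in $L^2(\mR^d)$, which is exactly why you need the preliminary truncation and cut-off; the paper's direct arguments avoid that dependence and are in this sense slightly more robust, while your version is more self-contained since it does not outsource the decomposition.

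One concrete slip: you justify the $\chi_R$ cut-off step ``using $u\in L^2$ from the preceding truncation step,'' but truncation at level $N$ yields boundedness, not square-integrability, so as written the domination against $(|z|^2\wedge 1)\,\nu(\dz)$ is not secured. For bounded $D$ this is harmless, since $\vd\subseteq L^2(D)$ by Lemma~\ref{lem:l2d} (and then truncation is not even needed); note that the paper's own cut-off lemma likewise uses $\int_{\mR^d}u^2\,\dx<\infty$ without comment. You should replace the appeal to truncation by an appeal to Lemma~\ref{lem:l2d}, or otherwise argue the $L^2$-integrability in the unbounded case with compact boundary.
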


We may construct the approximating functions $\phi_n$ in the same way as in \cite{MR3310082} provided that we check that the mollification, translation and cut-off are continuous in the seminorm $\sqrt{\ED(\cdot,\cdot)}$. We do this below.
Let $\eta\in C_c^\infty(B_1)$ be a nonnegative radial function on $\mR^d$ satisfying $\int_\Rd \eta(x)\, \dx = 1$ and let $\eta_\eps(x) = \eps^{-d}\eta(\frac x {\eps})$ for $\eps>0$, $x\in \Rd$. Here $B_r=B(0,r)$, as usual.

In the sequel we write $f_n \to f$ to denote $\lim_{n\to \infty}\ED(f-f_n,f-f_n) = 0$.

\begin{lemma}[Mollification]\label{lem:mol}
	For every $u\in\vd_0$, $\eta_\eps \ast u
	\to
	u$
	as $\eps \to 0$.
	\end{lemma}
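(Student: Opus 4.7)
The plan is to reduce the claim to the classical fact that mollification converges to the identity in $L^2(\mR^d)$. Two preliminary observations make this reduction work. First, the full-space Dirichlet form
\[\mathcal{E}_{\mR^d}(v,v) := \tfrac12 \iint_{\mR^d\times\mR^d}(v(x)-v(x+y))^2 \nu(\dy)\dx\]
dominates $\ED(v,v)$ for every measurable $v$, since $\mathbf{1}_D(x)\vee\mathbf{1}_D(x+y)\leq 1$. Second, for $u\in\vd_0$ the two forms agree on $u$ itself, because $u=0$ on $D^c$ kills the subtracted $D^c\times D^c$ contribution; hence $\mathcal{E}_{\mR^d}(u,u)=\ED(u,u)<\infty$. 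It will therefore suffice to establish the stronger statement that $\mathcal{E}_{\mR^d}(u-\eta_\eps\ast u,\,u-\eta_\eps\ast u)\to 0$ as $\eps\to 0$.

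The main trick is that mollification commutes with the finite-difference operator. Setting $g_y(x):= u(x)-u(x+y)$ and $v_\eps := u-\eta_\eps\ast u$, a direct computation yields
\[v_\eps(x)-v_\eps(x+y)\;=\;g_y(x)-(\eta_\eps\ast g_y)(x),\]
so that
\[\ED(v_\eps,v_\eps)\;\leq\;\mathcal{E}_{\mR^d}(v_\eps,v_\eps)\;=\;\tfrac12\int\nu(\dy)\int_{\mR^d}\bigl(g_y(x)-(\eta_\eps\ast g_y)(x)\bigr)^2\dx.\]

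The final step is dominated convergence in $y$. Fubini together with the finiteness of $\mathcal{E}_{\mR^d}(u,u)$ gives
\[\int\|g_y\|_{L^2(\mR^d)}^2\,\nu(\dy)\;=\;2\,\mathcal{E}_{\mR^d}(u,u)\;<\;\infty,\]
so $g_y\in L^2(\mR^d)$ for $\nu$-a.e.\ $y$. For each such $y$ the classical $L^2$-mollification lemma gives $\|g_y-\eta_\eps\ast g_y\|_{L^2}^2\to 0$ as $\eps\to 0$, while Young's inequality supplies the $\eps$-uniform bound $\|g_y-\eta_\eps\ast g_y\|_{L^2}^2\leq 4\|g_y\|_{L^2}^2$, which is $\nu$-integrable by the previous display. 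Dominated convergence in $y$ then yields $\ED(v_\eps,v_\eps)\to 0$. The only nontrivial point in this outline is the passage from $\ED(u,u)<\infty$ to $\mathcal{E}_{\mR^d}(u,u)<\infty$, which is exactly where the hypothesis $u\in\vd_0$ is used in order to furnish a $\nu$-integrable dominating function; everything else is routine.
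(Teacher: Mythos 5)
Your proof is correct and follows essentially the same route as the paper: both reduce to $\mathcal{E}_{\mR^d}(u,u)=\ED(u,u)<\infty$, pass the mollification through the increment $g_y(x)=u(x)-u(x+y)$, and finish by dominated convergence in $y$ with the dominating function $4\|g_y\|_{L^2}^2$. The only cosmetic difference is that you invoke the classical $L^2$-mollification lemma together with Young's inequality, whereas the paper unpacks that step by hand via Jensen's inequality and the continuity of translations in $L^2(\mR^d)$.
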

\begin{proof}
	Note that $\mathcal E_{\mR^d}(u,u) = \ED(u,u) < \infty$.
	It suffices to verify that $\mathrm{I} := \iint\limits_{\mathbb R^d\times\mathbb R^d} (u\ast\eta_\eps(x) - u(x) - u\ast\eta_\eps(x+y) + u(x+y))^2 \,\nudy \dx \to 0$ as $\eps \to 0$. By Fubini--Tonelli theorem and Jensen's inequality,
	\begin{eqnarray}
	\mathrm{I} &=& \iint\limits_{\mathbb R^d\times\mathbb R^d} \left(\int_{B_1} \left(u(x-\eps z) - u(x+y - \eps z) - u(x) + u(x+y) \right)\eta(z)\, \dz \right)^2 \,\nudy \dx\nonumber\\
	&\leq& \int_{B_1}\int_{\mathbb R^d}\int_{\mathbb R^d}   \Big(u(x-\eps z) - u(x+y - \eps z) - u(x) + u(x+y)\Big)^2 \,\dx\nudy \,\eta (z) \dz\label{eq:moll}.
	\end{eqnarray}
	We will apply the dominated convergence theorem to the integral over $B_1\times \mR^d$.
	By the translation invariance of the Lebesgue measure,
	\begin{align*}
	&\eta(z)\pord (u(x-\eps z) - u(x+y - \eps z) - u(x) + u(x+y))^2\, \dx  \leq 4\eta(z)\pord (u(x) - u(x+y))^2\, \dx,
	\end{align*}
	which is integrable against $\nudy \dz$.
	Furthermore, by the continuity of translations in $L^2(\mR^d)$ the expression on the left-hand side converges to 0 as $\eps \to 0$, for every $z\in B_1,$ and $y\in\mR^d$. This ends the proof.
\end{proof}
We can use a similar reasoning to get the following fact.
\begin{corollary}[Translation]
	For every $u\in\vd_0$, $z\in B_1$, $u(\cdot + \eps z) \to
	u(\cdot)$
as $\epsilon \to 0$.
\end{corollary}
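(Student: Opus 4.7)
The plan is to follow the proof of Lemma~\ref{lem:mol} almost verbatim, simply dropping the outer averaging $\int_{B_1}(\cdots)\eta(z)\,\dz$ since $z$ is now fixed. As a preliminary reduction, since every $u \in \vd_0$ vanishes a.e.\ on $D^c$ we have $\mathcal E_{\mR^d}(u,u) = \ED(u,u) < \infty$, and for any measurable $f\colon \mR^d \to \mR$ one has $\ED(f,f) \leq \mathcal E_{\mR^d}(f,f)$ because the latter form differs from the former by the nonnegative $D^c\times D^c$-integral. Hence it suffices to prove
\[
\mathcal E_{\mR^d}\bigl(u - u(\cdot + \eps z),\, u - u(\cdot + \eps z)\bigr) \longrightarrow 0 \quad \text{as } \eps \to 0.
\]

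Next I would expand this quadratic form and apply Fubini--Tonelli, reducing the task to showing that
\[
\iint\limits_{\mR^d\times\mR^d}\bigl(u(x) - u(x+\eps z) - u(x+y) + u(x+y+\eps z)\bigr)^2 \nudy\,\dx \longrightarrow 0,
\]
and then invoke the dominated convergence theorem with respect to $\nudy$. For pointwise convergence, fix $y$ such that $g_y(x) := u(x) - u(x+y)$ lies in $L^2(\mR^d)$ -- this is the case for $\nu$-a.e.\ $y$ by Fubini applied to the finiteness of $\mathcal E_{\mR^d}(u,u)$ -- rewrite the integrand as $\bigl(g_y(x) - g_y(x + \eps z)\bigr)^2$, and conclude that the inner $\dx$-integral tends to $0$ by the standard $L^2$-continuity of translations applied to $g_y$. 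For the dominating function, the elementary inequality $(a - b)^2 \leq 2a^2 + 2b^2$ combined with translation invariance of the Lebesgue measure yields
\[
\int_{\mR^d}\bigl(u(x) - u(x+\eps z) - u(x+y) + u(x+y+\eps z)\bigr)^2 \dx \leq 4\int_{\mR^d}(u(x) - u(x+y))^2 \dx,
\]
and the right-hand side is $\nudy$-integrable with total mass $8\,\mathcal E_{\mR^d}(u,u) < \infty$.

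There is no serious obstacle here; indeed the authors point out just before the corollary that the argument is obtained from the proof of Lemma~\ref{lem:mol} by fixing $z \in B_1$ in the integral~\eqref{eq:moll}. The only mild care needed is in checking that $g_y \in L^2(\mR^d)$ for $\nu$-a.e.\ $y$, which is an immediate consequence of Fubini--Tonelli applied to $u \in \vd_0 \subset \vd$.
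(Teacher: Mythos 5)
Your proposal is correct and is essentially the paper's own argument: the authors prove the corollary precisely by fixing $z\in B_1$ in the integral \eqref{eq:moll} from the proof of Lemma~\ref{lem:mol} and repeating the same reasoning (reduction to $\mathcal{E}_{\mR^d}$, dominated convergence in $\nu(\dy)$ with the bound $4\int_{\mR^d}(u(x)-u(x+y))^2\dx$, and $L^2$-continuity of translations for the pointwise limit). Your remark that $g_y\in L^2(\mR^d)$ only for $\nu$-a.e.\ $y$ is a slight refinement of the paper's wording, and it suffices since the dominated convergence argument only requires $\nu$-a.e.\ convergence.
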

We can easily construct smooth functions $q_j$, $j\in\mathbb{N}$ such that $0\leq q_j \leq 1$, $q_j = 1$ in $B_j$, $q_j = 0$ in $B_{j+1}^c$ and such that $ |\nabla q_j(x)| < M$, $x\in\mR^d$, $j=1,2,\ldots$
\begin{lemma}[Cut-off]
	For every $u\in \vd_0$, $q_j u \to u$
as $j\to\infty$.
\end{lemma}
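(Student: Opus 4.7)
Set $w_j = u - q_j u = (1-q_j)u$. Since $u = 0$ a.e.\ on $D^c$, so is $w_j$, hence $\ED(w_j,w_j) = \mathcal{E}_{\mR^d}(w_j,w_j)$, and the goal is to show that the right-hand side vanishes as $j \to \infty$. The plan is a standard two-term splitting followed by dominated convergence applied twice.

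The first step is the algebraic identity
\[
w_j(x)-w_j(y) = (1-q_j(x))\bigl(u(x)-u(y)\bigr) - u(y)\bigl(q_j(x)-q_j(y)\bigr),
\]
which together with $(a+b)^2\le 2a^2+2b^2$ reduces the task to showing that
\[
\mathrm{I}_j := \iint_{\mR^d\times\mR^d}(1-q_j(x))^2(u(x)-u(y))^2\nu(x,y)\,\dx\dy
\]
and
\[
\mathrm{II}_j := \iint_{\mR^d\times\mR^d}u(y)^2(q_j(x)-q_j(y))^2\nu(x,y)\,\dx\dy
\]
both tend to $0$. For $\mathrm{I}_j$, I would observe that $(1-q_j(x))^2\le 1$, and for every fixed $x$ eventually $x\in B_j$, so $(1-q_j(x))^2\to 0$ pointwise; the dominant $(u(x)-u(y))^2\nu(x,y)$ is integrable because $u\in\vd_0$, so dominated convergence yields $\mathrm{I}_j\to 0$.

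The main work sits in $\mathrm{II}_j$. Using the symmetry $\nu(x,y)=\nu(y,x)$ to relabel $x\leftrightarrow y$, and then the fact that $u$ is supported in $D$, I would rewrite $\mathrm{II}_j=\int_{D} u(x)^2 F_j(x)\,\dx$ with $F_j(x):=\int_{\mR^d}(q_j(x)-q_j(y))^2\nu(x,y)\,\dy$. The uniform Lipschitz bound $|q_j(x)-q_j(y)|\le M|x-y|$ together with the trivial bound $|q_j(x)-q_j(y)|\le 2$ give
\[
F_j(x)\le \int_{\mR^d}\bigl(M^2|z|^2\wedge 4\bigr)\nu(z)\,\dz =: C < \infty,
\]
independently of $j$ and $x$, using only that $\int (|z|^2\wedge 1)\nu(z)\,\dz<\infty$. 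For fixed $x$, once $j>|x|+1$ the integrand $(q_j(x)-q_j(y))^2$ is supported in $\{|y|\ge j\}$, whose $\nu(x,\cdot)$-mass is at most $\nu(B_{j-|x|}^c)\to 0$; thus $F_j(x)\to 0$ pointwise. A second dominated-convergence argument on $\int_{D} u(x)^2 F_j(x)\,\dx$, with dominant $C\,u(x)^2$, concludes.

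The main obstacle I anticipate is integrability of the final dominant: to legitimize the second dominated convergence I need $u\in L^2(D)$. For bounded $D$ this is immediate from Lemma~\ref{lem:l2d}. For open sets $D$ with continuous, hence compact, boundary one is otherwise forced into the exterior-domain situation (where $D^c$ is bounded); there, rather than invoking a global $L^2$ bound, I would localise $F_j$ by exploiting that $q_j(x)-q_j(y)$ is nonzero only when at least one of $x,y$ lies in the annulus $B_{j+1}\setminus B_j$, and combine this with the decay $\nu(B_r^c)\to 0$ and a slicing argument to pass to the limit without the blanket $L^2$ hypothesis, reducing everything in the end to the finiteness of $\ED(u,u)$.
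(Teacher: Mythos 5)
Your proof is correct and takes essentially the same route as the paper: the same two-term splitting of the increment of $(1-q_j)u$, dominated convergence with dominant $(u(x)-u(y))^2\nu$ for the first term, and the Lipschitz/trivial bound yielding the $\bigl(M^2|z|^2\wedge 4\bigr)$-type dominant for the second (the paper applies dominated convergence on the product space, you apply Fubini first and dominate $F_j$ — the same argument in substance). As for the $L^2$ caveat you flag, note that the paper's own proof uses $\int_{\mR^d}u(x)^2\,\dx<\infty$ at exactly the same point without comment, so your concern applies equally there; in the situations where the theorem is invoked (bounded $D$, via Lemma~\ref{lem:l2d}) it is harmless.
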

\begin{proof}
	Since $|(q_j u)(x) - (q_j u)(x+y) - u(x) + u(x+y)| \leq |(1-q_j(x))(u(x+y) - u(x))| + |(q_j(x) - q_j(x+y))u(x+y)|$, we get
	\begin{align}
	\mathcal{E}_{\Rd}(q_j u - u, q_j u - u)
	&\leq \pordd (1-q_j(x))^2(u(x) - u(x+y))^2\, \nudy \dx\label{s1}\\
	&+ \pordd (q_j(x) - q_j(x+y))^2u(x+y)^2 \,\nudy \dx.\label{s2}
	\end{align}
The integrands in \eqref{s1} and \eqref{s2} converge to $0$  $a.e.$ as $j\to \infty$.  For \eqref{s1} we have $(q_j(x) -1)^2 (u(x) - u(x+y))^2 \leq (u(x) - u(x+y))^2$,
which is integrable against $\nudy \dx$
since $u \in \vd_0$. For \eqref{s2} we use the smoothness of $q_j$:
	\begin{align*}
	(q_j(x) - q_j(x+y))^2 u(x+y)^2 \leq C(1\wedge|y|^2) u(x+y)^2,
	\end{align*}
	and so
	\begin{align*}
	\pord\pord (1\wedge|y|^2) u(x+y)^2 \,\dx\nudy  = \pord (1\wedge |y|^2)\, \nudy \pord u(x)^2 \,\dx < \infty.
	\end{align*}
By the dominated convergence theorem we obtain the desired result.
\end{proof}


\end{document}